\newtheorem{thm}{Theorem}[section]
\newtheorem{cor}[thm]{Corollary}
\newtheorem{lem}[thm]{Lemma}
\newtheorem{prop}[thm]{Proposition}
\newtheorem{defn}[thm]{Definition}
\newtheorem{aspt}[thm]{Assumption}
\newtheorem{rem}[thm]{Remark}
\newtheorem{theorem}{Theorem}[section]
\newtheorem{lemma}[theorem]{Lemma}
\newtheorem{proposition}[theorem]{Proposition}
\algnewcommand{\Inputs}[1]{%
  \State \textbf{Inputs:}
  \Statex \hspace*{\algorithmicindent}\parbox[t]{.8\linewidth}{\raggedright #1}
}
\algnewcommand{\Initialize}[1]{%
  \State \textbf{Initialize:}
  \Statex \hspace*{\algorithmicindent}\parbox[t]{.8\linewidth}{\raggedright #1}
}
\algnewcommand{\Outputs}[1]{%
  \State \textbf{Outputs:}
  \Statex \hspace*{\algorithmicindent}\parbox[t]{.8\linewidth}{\raggedright #1}
}
\newcommand{\bX}{{\bf X}}
\newcommand{\bx}{{\bf x}}
\newcommand{\bW}{{\bf W}}
\newcommand{\bw}{{\bf w}}
\newcommand{\by}{{\bf y}}
\newcommand{\bZ}{{\bf Z}}
\newcommand{\bz}{{\bf z}}
\newcommand{\Ex}{{\mathcal{X}}}
\newcommand{\D}{{\mathcal{D}}}
\newcommand{\R}{{\mathbb R}}
\renewcommand{\P}{{\mathbb P}}
\newcommand{\B}{\bar {\bf \mathcal{B}}}
\newcommand{\Ly}{{\mathbf{Ly}}}
\newcommand{\C}{{\mathcal{C}}}
\newcommand{\lc}{{\mathcal{L}}}
\newcommand{\bbC}{\mathbb{C}}
\newcommand{\RNum}[1]{\uppercase\expandafter{\romannumeral #1\relax}}
\newcommand{\sign}{\text{sign}}
\newcommand{\var}{\text{var}}
\newcommand\tabcaption{\def\@captype{table}\caption}
\definecolor{DSgray}{cmyk}{0,1,0,0}
\newcommand{\E}{\mathbb{E}}
\newcommand{\bfx}{\mathbf{x}}
\newcommand{\bfw}{\mathbf{w}}
\newcommand{\calE}{\mathcal{E}}
\begin{document}
\title{Spectral Gap of Replica Exchange Langevin Diffusion on Mixture Distributions}
\author{Jing Dong\footnote{Columbia University, Email: jing.dong@gsb.columbia.edu}  ~and Xin T. Tong\footnote{Department of Mathematics, National University of Singapore, Email: mattxin@nus.edu.sg}}
\date{\today}
\maketitle

\begin{abstract}

Langevin diffusion (LD) is one of the main workhorses for sampling problems.
However, its convergence rate can be significantly reduced if the target distribution is a mixture of multiple densities, especially when each  component concentrates around a different mode. Replica exchange Langevin diffusion (ReLD) is a sampling method that can circumvent this issue. In particular, ReLD adds  another LD sampling a high-temperature version of the target density, and exchange the locations of two LDs according to a Metropolis-Hasting type of law. This approach can be further extended to multiple replica exchange Langevin diffusion (mReLD), where $K$ additional LDs are added to sample distributions at different temperatures and exchanges take place between neighboring-temperature processes. While ReLD and mReLD have been used extensively in statistical physics, molecular dynamics, and other applications, there is little existing analysis on its convergence rate and choices of temperatures. This paper closes these gaps assuming the target distribution is a mixture of log-concave densities. We show ReLD can obtain constant or even better convergence rates even when the density components of the mixture concentrate around isolated modes. We also show using mReLD with $K$ additional LDs can achieve the same result while the exchange frequency only needs to be $(1/K)$-th power of the one in ReLD. 
\end{abstract}

\section{Introduction}
\label{sec:intro}
Given a $d$-dimensional distribution $\pi(x)\propto \exp(-H(x))$, a standard way to generate samples from $\pi(x)$ is simulating the over-damped Langevin  diffusion (LD)
\begin{equation}\label{eq:LD}
dX(t)=\nabla \log \pi(X(t))dt+\sqrt{2}dB(t)
\end{equation}
for a long enough time horizon, where $B(t)$ is a $d$-dimensional Brownian motion. The main justification of this approach is that, under mild conditions, we can show that the invariant measure of \eqref{eq:LD} is the target distribution $\pi$.

This approach can be quite efficient when the potential or Hamiltonian function $H(x)$ is strongly-convex. However, if $H$ has multiple local minimums, $m_1,\ldots, m_I$, sitting inside deep potential wells, LD can be very inefficient. In fact, the process will spend a large amount time circulating inside one potential well  before it can reach another potential wells ( see, for example, \cite{MS14}). Such behavior significantly slows down the convergence rate of LD to stationarity (see Proposition \ref{prop:eg1} for a concrete example).  

\subsection{Replica exchange Langevin diffusion (ReLD)}
\label{sec:introReLD}
Replica exchange, also known as parallel tempering, is a method that has been used extensively in molecular dynamic (MD) to improve the convergence rate of the sampling process to stationarity \cite{earl2005}. In its simplest implementation to LD, we consider a parallel LD process with a stronger stochastic force:
\begin{equation}\label{eq:LDY}
dY(t)=\nabla \log \pi(Y(t))dt- \tau Y(t)/M^2 dt+\sqrt{2\tau}dW(t),
\end{equation}
where $W(t)$ is an independent $d$-dimensional Brownian  motion and $\tau$ is a parameter often known as the temperature.
$M$ is a large number so that the local minimums of $H$ satisfy $\max_{1\leq i \leq I} \|m_i\|\leq M$. 
 The stationary distribution of \eqref{eq:LDY} takes the form 
 \[
 \pi^y(y)\propto \exp\left(-\frac1\tau H(y)- \frac{\|y\|^2}{2M^2}\right).
 \] 
 When $\tau$ is selected to be a large number, the effective Hamiltonian for $Y_t$ is approximately $\tau^{-1}H(y)$, which shares the same local minimums with $H(x)$, but the height of the potential wells are only $\frac{1}{\tau}$ of the latter. Consequentially, it is easier for \eqref{eq:LDY} to climb out of potential wells and visit other local minimums. We also remark that adding $-Y(t)/M^2$ in the drift term of \eqref{eq:LDY} simplifies the theoretical analysis, as the stationary distribution of $\pi^y(y)$ will be concentrated in $B(0,M):=\{x:\|x\|\leq M\}$ even if $\tau\to \infty$. For most simulation problems in practice, the state space is bounded, in which case we do not need to include this additional drift term.  
 
Even though $Y(t)$ is not sampling the target density $\pi(x)$, it can be used to help $X(t)$ sample $\pi(x)$. To do so, 
let $\rho>0$ denote a swapping intensity, so that sequential swapping events take place according to an exponential clock with rate $\rho$. At a swapping event time $t$, $X(t)$ and $Y(t)$ swap their positions (values) with  a Metropolis-Hasting type of probability $s(X(t), Y(t))$, where
\begin{equation}
\label{eq:simples}
s(x,y)=1\wedge \frac{\pi(y)\pi^y(x)}{\pi(x)\pi^y(y)}.
\end{equation}
We will refer to this joint process $((X(t), Y(t)))$ as the replica exchange Langevin diffusion (ReLD). 
It can verified that $\pi\times\pi^y$ is the invariant distribution of ReLD under mild ergodicity conditions, i.e., the ReLD $(X(t),Y(t))$  will converge to $\pi\times \pi^y $ as $t\rightarrow\infty$. Thus, we can use the trajectory of $X(t)$ to generate samples from $\pi$. 

Exchanging $X(t)$ with $Y(t)$ can improve the convergence of $X(t)$ and we demonstrate the basic idea through Figure \ref{fig:1}. As mentioned before, the main reason why sampling directly from the LD can be slow for multimodal $\pi$ is that $X(t)$ can be trapped in a potential well for a  long period of time. In Figure \ref{fig:1}, suppose $X(t)$ is currently in $B(m_1,r)$, which is a ball of radius $r$ centered at a mode $m_1$. In order for $X(t)$ to visit a different mode $m_2$, it needs to visit the boundary of a potential well, e.g., the origin in Figure \ref{fig:1}, and this can take a long time. On the other hand, it is much easier for $Y(t)$ to cross the potential wells. In particular, $Y(t)$ can move ``freely" in a larger region demonstrated as $B(0,R)$ in Figure \ref{fig:1}, which includes all the local minimums. The exchange mechanism \eqref{eq:simples} swaps $X(t)$ and $Y(t)$ with decent probability if $Y(t)$ is in a different ``high-probability" area for $X(t)$, say $B(m_2,r)$. This helps $X(t)$ visit other modes, which effectively improves the convergence rate of $X(t)$.

\begin{figure}[h!]
\begin{center}
\includegraphics[scale=0.4]{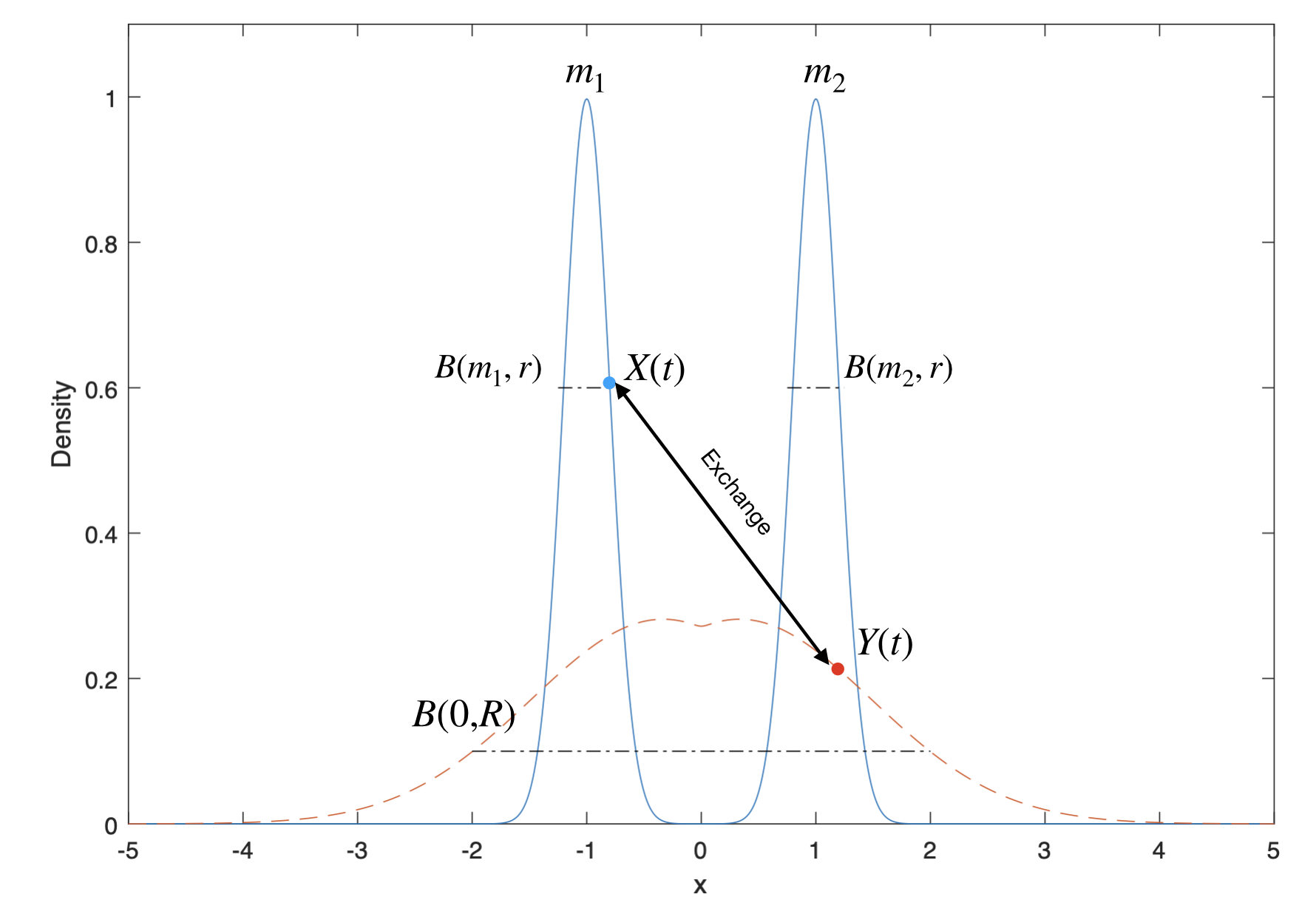}
\end{center}
\caption{ReLD: The (blue) solid line plots the density function of a bi-modal density $\pi$. The (red) dashed line plots the tempered density function $\pi^y$. The (black) dash-dot intervals are ``high-probability" areas for the two processes to sample from.}
\label{fig:1}
\end{figure}

\subsection{Multiple replica exchange Langevin Diffusion (mReLD)}
\label{sec:intromReLD}
One major issue with ReLD introduced in Section \ref{sec:introReLD} is that the exchanges may not happen often enough. 
To see this, note that in Figure \ref{fig:1}, when $X(t)$ is near the first mode $m_1$, the exchange probability \eqref{eq:simples} can be very small unless $Y(t)$ is in the ``high-probability" areas $B(m_1,r)$ or $B(m_2,r)$ as well. But since $Y(t)$ is circling inside a large area $B(0,R)$, the chance that it is in $B(m_1,r)$ or $B(m_2,r)$ can be small if $r\ll R$. 
To amend this issue, MD simulation often implements multiple replica exchange (mReLD), which involves simulating multiple parallel LDs with an increasing sequence of temperatures $1=\tau_0<\tau_1<\ldots<\tau_{K}$:
\begin{equation}\label{eq:ezmReLD}
\begin{split}
dX_0(t) &= \nabla \log\pi(X_0(t))dt + \sqrt{2\tau_0}dW_0(t),\\
dX_1(t) &= \nabla \log\pi(X_1(t))dt + \sqrt{2\tau_1}dW_1(t),\\
&\dots\\
dX_K(t) &=  \nabla \log\pi(X_K(t))dt- \tau_K X_K(t)/M^2 + \sqrt{2\tau_K}dW_K(t).
\end{split}\end{equation}
Marginally, $X_k(t)$ has  an invariant distribution 
\begin{equation}
\label{eqn:ezmeasure}
\pi_k(x)\propto \exp(-\tfrac{1}{\tau_k}H(x)) \mbox{ for  }k\leq K-1,\mbox{ and } \pi_K(x)\propto\exp(-\tfrac{1}{\tau_K}H(x)-\tfrac{\|x\|^2}{2M^2}).
\end{equation}
The exchange takes place between neighboring replicas. In particular, for each $k$, $0\leq k\leq K-1$, a swapping event takes place according to an independent exponential clock with rate $\rho$, at which $X_k(t)$ and $X_{k+1}(t)$ are swapped with probability
$s_k(X_k(t), X_{k+1}(t))$, where
\[
s_k(x_k,x_{k+1})=1\wedge \frac{\pi_k(x_{k+1})\pi_{k+1}(x_k)}{\pi_k(x_k)\pi_{k+1}(x_{k+1})}.
\]
Evidently, ReLD is a special case of mReLD with $K=1$. Note that in practice, one can also consider exchanges taking place between non-adjacent replicas. However, in our setting, exchanging between adjacent replicas yields better convergence rate both intuitively and via rigorous analysis (see Section \ref{sec:mReLD_proof} for more details). 

Adding intermediate temperatures improves the low exchange probability issue mentioned earlier and we illustrate the basic idea through Figure \ref{fig:2}, where we run three parallel LDs, i.e., $K=2$. The ``high-probability" areas for $X_0(t), X_1(t)$, and $X_2(t)$ are $B(m_1,r_0)\cup B(m_2,r_0)$, $B(m_1,r_1)\cup B(m_2,r_1)$, and $B(0,r_2)$ respectively. We note that $r_0<r_1<r_2$. The exchange between $X_0(t)$ and $X_2(t)$ may not happen often, since $X_2(t)$ has only small chance of being inside $B(m_1,r_0)\cup B(m_2,r_0)$. On the other hand, $X_1(t)$ stays mostly inside $B(m_1,r_1)\cup B(m_2,r_1)$. Conditioned on that, it has better chance being inside $B(m_1,r_0)\cup B(m_2,r_0)$ than $X_2(t)$, and hence it can exchange with $X_1(t)$ more often. From this discussion, we see that adding additional replicas improves the chance of successful exchanges. Meanwhile, non-adjacent replica exchanges are unlike to happen, so we decide to exclude them in our design of mReLD.

\begin{figure}[h!]
\begin{center}
\includegraphics[scale=0.4]{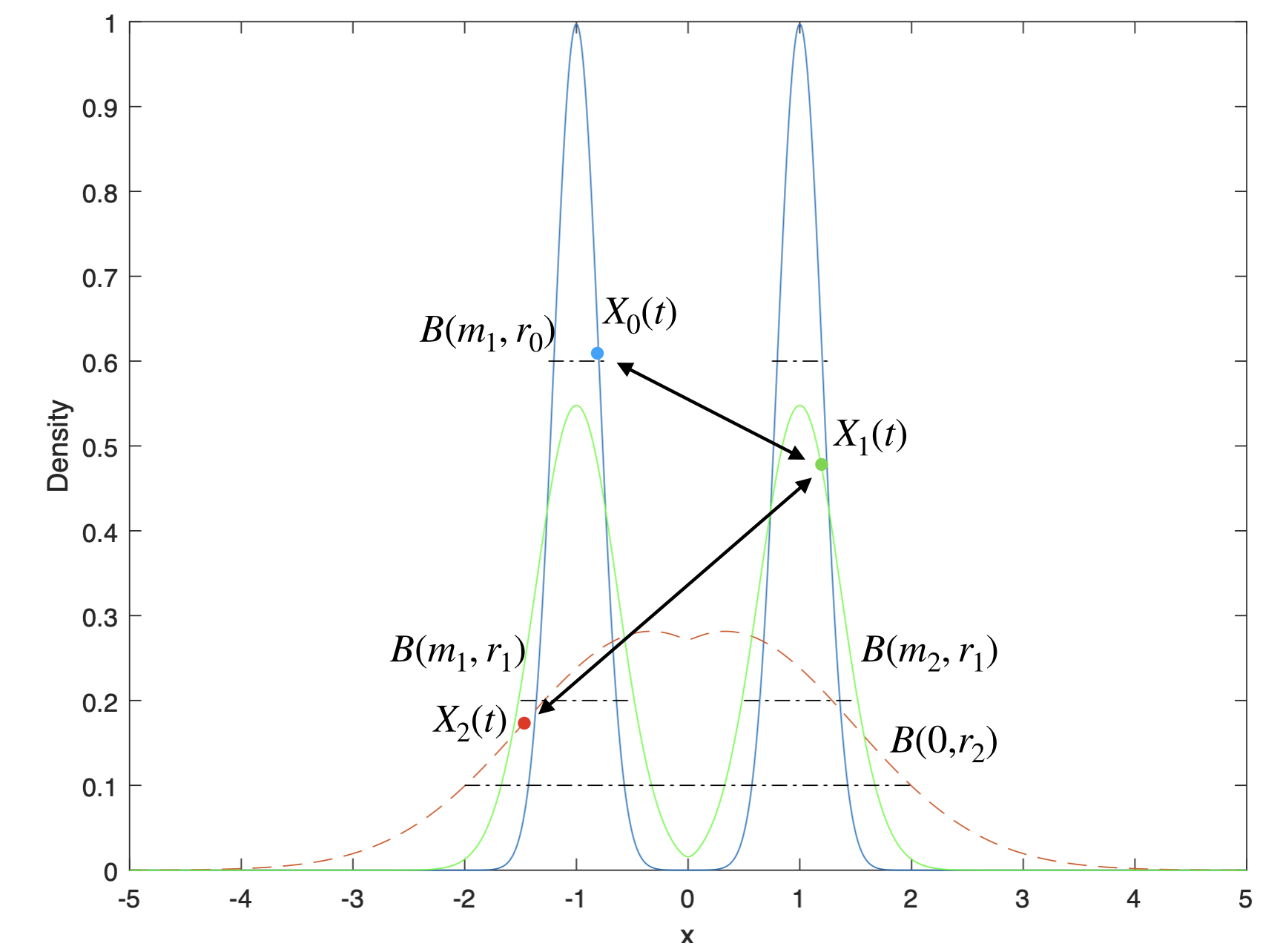}
\end{center}
\caption{mReLD: The tall (blue) solid, short (green) solid, and (red) dashed lines plot the density functions of $\pi_0,\pi_1,\pi_2$. The (black) dash-dot intervals are ``high-probability" areas for all three  processes.}
\label{fig:2}
\end{figure}

\subsection{Main questions and answers in simple settings}
\label{sec:introresults}
While ReLD and mReLD are used extensively in MD simulations, there are very few rigorous mathematical analyses explaining how replica exchange improves  simulation efficiency. 
More specifically, there is little understanding of what kind of distributions can ReLD or mReLD sample efficiently.
This theoretical gap also leads to questions on how to choose the temperatures and swapping rates when implementing ReLD, and how to choose the sequence of temperatures in mReLD. 
Existing guidance on this matter comes mostly from physical intuition and crude estimations. For example, for mReLD, \cite{abraham2008ensuring, Kofke02} recommend choosing a geometric sequence of $\tau_i$ so that the exchange acceptance rate is around $20\%$. Yet, there is no rigorous analysis supporting that.

The goal of this paper is to close these theoretical gaps by investigating the convergence rate of  ReLD and mReLD on mixture type of densities. We choose mixture densities because they are simple ways to construct multimodal distributions and are used extensively in statistics literature \cite{friedman2001elements,theodoridis2015machine}. 
To provide a glimpse of our main result, we discuss our results and its implications through a simple example as Proposition \ref{prop:eg1}.

We first introduce a quantification for the convergence rate of different sampling schemes. Mathematically, the convergence rate of a continuous time Markov process $Z_t$ is characterized by a quantity called \emph{spectral gap}. To formally define the spectral gap, we first define the generator of $Z_t$ as
\[
\lc(f)(z):=\lim_{t\to 0} \frac1t\E [f(Z_t)-f(z)|Z_0=z],
\]
for $f\in \D(\lc)$ where $\D(\lc)$ is a subset of $\bbC_c^2(\R^d)$ such that the above limit exists and $\bbC_c^2(\R^d)$ is the space
of twice continuously differentiable function with compact support.
We also define the associated carr\'{e} du champ as 
\[
\Gamma(f)=\frac12(\lc(f^2)-2f\lc(f)),
\]
and the Dirichlet form as
\[
\calE(f)=\int \Gamma(f) \pi^z(dx)
\]
if the invariant measure of $Z_t$ is $\pi^z$. The inverse of the spectral gap of $Z_t$ can then be defined as 
\begin{equation}
\label{eqn:kappa}\kappa = \inf\left\{\frac{\calE(f)}{\var_{\pi^z}(f)}; f\in \bbC_c^2(\R^d), \var_{\pi}(f) \neq 0\right\}.
\end{equation}
Note the space $\bbC_c^2(\R^d)$ is a core of the domain of $\calE$ in our applications, and the definition of $\kappa$ can be extended to the full domain (see \cite{bakry2013analysis} section 1.13). We restricted our discussions to $\bbC_c^2(\R^d)$ for simplicity.

The reason why $\kappa$ controls the speed of convergence of $Z_t$ towards $\pi^z$ can be found in Theorem 4.2.5 of \cite{bakry2013analysis}. In particular, for any test function $f\in L^2(\pi^z)$, there is a constant $C_0$ such that  
\[
\int (\E[f(Z_t)|Z_0=z]-\E_{\pi^z} f(Z))^2\pi^z(z)dz\leq C_0 e^{-2t/\kappa}.
\]
In other words, the ``simulation" expectation $\E[f(Z_t)|Z_0=z]$ converges to target expectation exponentially fast with $\pi^z$-a.s. initial conditions, and the convergence rate is $1/\kappa$.


Using the inverse spectral gap $\kappa$, we can show that LD converges very quickly for a singular Gaussian distribution, but very slowly for a mixture of two singular Gaussians.  Let $\phi$ denote the density function of $d$-dimensional standard Gaussian random vector, i.e.,
$\phi(x)=(2\pi)^{-d/2} \exp(-\|x\|^2/2)$.

\begin{prop}
 \label{prop:eg1}
 The inverse of the spectral gap $\kappa$ for LD satisfies the following bounds:
 \begin{enumerate}[1)]
 \item For $\pi(x)\propto\phi(x/\epsilon)$, $\kappa \leq\epsilon^2.$
 \item 
For $\pi(x)\propto \frac{1}{2}\phi(x/\epsilon) + \frac{1}{2}\phi((x-m)/\epsilon)$ and $\epsilon\leq \frac{\|m\|}{16\sqrt{d}}$,\[\kappa \geq \frac{\epsilon^4}{80\|m\|^2}\exp\left(\frac{\|m\|^2}{64\epsilon^2}\right).\]
 \end{enumerate}
\end{prop}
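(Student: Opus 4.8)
\emph{Part 1.} Here $\pi=\mathcal{N}(0,\epsilon^2 I_d)$, so the associated LD is the Ornstein--Uhlenbeck process $dX=-X/\epsilon^2\,dt+\sqrt2\,dB$, for which $\Gamma(f)=|\nabla f|^2$ and hence $\calE(f)=\int|\nabla f|^2\,d\pi$. The plan is simply to invoke the Gaussian Poincar\'e inequality: for $X\sim\mathcal{N}(0,\epsilon^2 I_d)$ one has $\var_\pi(f)\le\epsilon^2\,\E_\pi|\nabla f|^2=\epsilon^2\,\calE(f)$ for all $f\in\bbC_c^2(\R^d)$, i.e.\ $\calE(f)/\var_\pi(f)\ge 1/\epsilon^2$. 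Since $\kappa$ is the reciprocal of $\inf_f\calE(f)/\var_\pi(f)$, this is exactly the assertion $\kappa\le\epsilon^2$. (Testing against $f(x)=x_1$ shows the bound is tight, but that is not needed.)

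\emph{Part 2.} Write $\pi=\tfrac12\mathcal{N}(0,\epsilon^2 I_d)+\tfrac12\mathcal{N}(m,\epsilon^2 I_d)$ and set $u=m/\|m\|$. The idea is to lower bound $\kappa$ by $\var_\pi(f)/\calE(f)$ for a single test function that detects the bottleneck between the two modes. Take $f(x)=g(\langle x,u\rangle)$, where $g:\R\to[0,1]$ is the fixed ramp equal to $0$ on $(-\infty,\|m\|/4]$, equal to $1$ on $[3\|m\|/4,\infty)$, and linear in between (after a routine smoothing and truncation to make $f\in\bbC_c^2$, which changes all estimates negligibly), so that $|\nabla f(x)|^2=(2/\|m\|)^2$ on the slab $S=\{\|m\|/4\le\langle x,u\rangle\le 3\|m\|/4\}$ and vanishes off $S$. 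First I would bound the variance from below: under the first component $\langle x,u\rangle\sim\mathcal{N}(0,\epsilon^2)$, so $f=0$ except on an event of probability $p:=\P(\mathcal{N}(0,\epsilon^2)\ge\|m\|/4)$, and symmetrically $f=1$ off a probability-$p$ event under the second component. The hypothesis $\epsilon\le\|m\|/(16\sqrt d)$ gives $\|m\|/(4\epsilon)\ge 4\sqrt d$, so $p$ is exponentially small; a short computation then yields $\E_\pi f\in[\tfrac12-\tfrac p2,\tfrac12+\tfrac p2]$ and $\E_\pi f^2\ge\tfrac12(1-p)$, hence $\var_\pi(f)\ge\tfrac14-O(p)\ge\tfrac15$.

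Next I would bound the Dirichlet energy from above: $\calE(f)=(4/\|m\|^2)\,\P_\pi(X\in S)\le 4p/\|m\|^2$, and the Gaussian tail estimate $\P(Z\ge z)\le e^{-z^2/2}$ with $z=\|m\|/(4\epsilon)$ gives $p\le\exp(-\|m\|^2/(32\epsilon^2))$. Combining the two bounds, $\kappa\ge\var_\pi(f)/\calE(f)\ge c\,\|m\|^2\exp(\|m\|^2/(32\epsilon^2))$ for an absolute constant $c$; since $\epsilon\le\|m\|/(16\sqrt d)$ forces the ratio of this quantity to $\tfrac{\epsilon^4}{80\|m\|^2}\exp(\|m\|^2/(64\epsilon^2))$ to exceed $1$ (the exponent $\tfrac{\|m\|^2}{64\epsilon^2}$ of the difference is nonnegative and the prefactor $\gtrsim \|m\|^4/\epsilon^4\ge1$), the stated bound follows.

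The conceptual content --- a ramp test function whose variance is $\Theta(1)$ while its energy is supported only in the exponentially light ``valley'' between the modes --- is standard; I expect the only real work to be the bookkeeping in Parts 2(i)--(ii), namely choosing the slab width and the tail estimates so that the variance stays bounded away from $0$ while the Dirichlet energy still absorbs essentially the full Gaussian exponent. Getting the constants to land at or below the clean values $80$ and $64$ is a matter of using slightly conservative tail bounds rather than any genuine difficulty.
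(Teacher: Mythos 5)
Your proposal is correct, and Part 2 takes a genuinely different route from the paper. For Part 1 the content is the same: the paper rederives the Gaussian Poincar\'e inequality by tensorizing over coordinates and applying its one-dimensional weight lemma (Lemma \ref{lm:univar}), whereas you simply cite the classical inequality; either way one gets $\var_\pi(f)\leq \epsilon^2\E_\pi\|\nabla f\|^2$. For Part 2 the paper tests the Rayleigh quotient with the likelihood ratio $f=\nu/\pi$, $\nu\propto\phi(x/\epsilon)$, so that $\var_\pi(f)=\chi^2(\nu\|\pi)\geq 1/16$ and the Dirichlet energy is computed from the explicit gradient of the ratio, split over the near-mode region and the in-between region; this is what produces the prefactor $5\|m\|^2/\epsilon^4$ and the exponent $\|m\|^2/(64\epsilon^2)$. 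You instead use a ramp in the direction $m/\|m\|$ supported on the middle slab, which localizes all the Dirichlet energy in the exponentially light valley and gives $\kappa\gtrsim \|m\|^2\exp(\|m\|^2/(32\epsilon^2))$ --- a strictly stronger bound, since $\epsilon\leq\|m\|/(16\sqrt{d})$ makes $\|m\|^2\geq \epsilon^4/(80\|m\|^2)$ and $1/32>1/64$ in the exponent, so the stated inequality follows. Your variance and tail estimates check out (with $p\leq e^{-8d}$ under the hypothesis, $\var_\pi(f)\geq 1/4-O(p)\geq 1/5$), and the one point you must not skip in a full write-up is the smoothing and truncation needed to place the test function in $\bbC_c^2(\R^d)$ as required by \eqref{eqn:kappa}; you flag this and it is indeed routine. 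In short, your argument is more elementary and yields a sharper constant in the exponent, while the paper's choice of $\nu/\pi$ has the advantage of tying the bound directly to the $\chi^2$-divergence between a component and the mixture, the same object that reappears in its later mean-difference analysis.
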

The proof of Proposition \ref{prop:eg1} can be found in Appendix A. 
The first result in Proposition \ref{prop:eg1} is quite well-known. For the second scenario, an upper bound for $\kappa$ can be found in \cite{MS14,schlichting2019poincare}, but we are not aware of any lower bound in the literature. 
It is worth pointing out that the requirement of $\epsilon\leq \frac{\|m\|}{16\sqrt{d}}$ or similar requirement in the second scenario is necessary for the LD to have a slow convergence rate. In contrast, if $\epsilon$ is larger than $\frac{\|m\|}{\sqrt{d}}$, the centers of the two Gaussians will be within one standard deviation from each other. Consequently, the two Gaussian distributions are overlapped and it would be easy to ``transition" from one to the other by LD alone.

Proposition \ref{prop:eg1} also points out that one of the most challenging types of densities 
for LD to sample are mixtures of ``well-separated" components, even if each of them is Gaussian. 
 In particular, for small values of $\epsilon$, the Gaussian distribution has a very sharp concentration around its mode (i.e., a deep potential well). We will often refer such  highly concentrated distributions as singular distributions in subsequent developments. When sampling a single Gaussian distribution using LD, the spectral gap is lower bounded by $\epsilon^{-2}$. In this case, the smaller the value of $\epsilon$, the faster the convergence rate is. However, when sampling a mixture of two such Gaussians with well-separated modes using LD, the convergence rate can be very slow. Specifically, the spectral gap is upper bounded by $\tfrac{\epsilon^2}{\|m\|^2}\exp(-\tfrac{\|m\|^2}{64\epsilon^2})$, which is extremely small for small values of $\epsilon$.
Mixture of highly singular densities is quite common in many practical situations. Section \ref{sec:expMorse} demonstrates how such singularity arises when the sample size increases in Bayesian statistics.
We will mostly focus on this type of densities in our subsequent developments.

We next show that by properly choosing the temperature and the swapping intensity in ReLD, we can substantially improve the convergence rate for the Gaussian mixture example (including the second scenario in Proposition \ref{prop:eg1}).  To highlight the challenge in the sampling efficiency, we focus on the dependence of the inverse of the spectral gap on the parameter $\epsilon$ (i.e., depth of the potential well), while keeping all other model parameters fixed. Moreover, using mReLD for low dimensional distributions, i.e., $d\leq 2$, the swapping rate $\rho$ can be reduced from $\epsilon^{-d}$ to $\epsilon^{-d/K}$. 
%

\begin{theorem}
\label{thm:simple}
Suppose the target density is a mixture of isotropic Gaussian distributions
\begin{equation}
\label{eq:simpleGsmixture}
\pi(x)\propto \sum_{i=1}^I p_i \phi\left(\frac{x-m_i}{\epsilon}\right), \mbox{ where } \phi(x)=\frac{1}{(2\pi)^{\frac{d}{2}}}\exp(-\tfrac12 \|x\|^2),
\end{equation}
where $\|m_i\|$ are bounded by $O(1)$ constants. 
For ReLD with $\tau, \rho\propto \epsilon^{-d}$ with target density $\pi$, the inverse of the special gap, $\kappa=O(1)$, i.e., is independent of $\epsilon$. When $d\leq 2$, for mReLD with 
\[
\tau_k=\epsilon^{-\frac{2k}{K}},\quad k=1,\ldots,K,\quad \rho=\epsilon^{-d/K}
\]
the inverse of the special gap, $\kappa=O(1)$.
\end{theorem}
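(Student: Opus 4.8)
The plan is to obtain Theorem~\ref{thm:simple} from a general spectral-gap comparison for (m)ReLD built on two classical devices: an exact decomposition of the Dirichlet form of the joint diffusion-plus-swap process, and the law of total variance applied along the partition of $x$-space into the basins $A_1,\dots,A_I$ of the modes $m_1,\dots,m_I$. For ReLD the generator splits as $\lc=\lc_X+\lc_Y+\rho\,\lc_{\mathrm{swap}}$ on $L^2(\pi\times\pi^y)$, and each summand is reversible for $\pi\times\pi^y$, so
\[
\calE(f)=\int|\nabla_xf|^2\,\pi(dx)\pi^y(dy)+\tau\int|\nabla_yf|^2\,\pi(dx)\pi^y(dy)+\frac{\rho}{2}\int s(x,y)\big(f(y,x)-f(x,y)\big)^2\,\pi(dx)\pi^y(dy),
\]
and it suffices to lower bound the right-hand side by a constant multiple of $\var_{\pi\times\pi^y}(f)$; for mReLD the analogue has $K+1$ Langevin forms and $K$ neighbouring-swap forms, and the goal is the same.

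Second, writing $\ell(x)\in\{1,\dots,I\}$ for the index of the basin containing $x$, $\pi_i:=\pi(\cdot\mid A_i)$, $p_i:=\pi(A_i)$ and $\bar f(i,y):=\E_{\pi_i}[f(\cdot,y)]$, the law of total variance conditioned on $(\ell(X),Y)$ gives
\[
\var_{\pi\times\pi^y}(f)=\sum_{i=1}^I p_i\int\var_{\pi_i}\big(f(\cdot,y)\big)\,\pi^y(dy)+\var\big(\bar f(\ell(X),Y)\big),
\]
and I would decompose the last term once more over $Y$ into a $Y$-fluctuation part and a genuinely between-basin part. The first term is the easy one: each $A_i$ is a convex (Voronoi) cell, and on it $\pi$ is, up to an $e^{o(1)}$ perturbation whose support carries exponentially small mass, a truncated copy of the isotropic Gaussian $\phi((\cdot-m_i)/\epsilon)$, hence log-concave with Poincaré constant $O(\epsilon^2)$; since $\pi^y$ has Poincaré constant $O(1)$ for the chosen temperature and the basin-wise diffusive Dirichlet integrals reassemble into the global ones (the boundaries $\partial A_i$ carry exponentially small $\pi$-mass), both this term and the $Y$-fluctuation part of $\bar f$ are bounded by $O(1)\big(\int|\nabla_xf|^2\pi\pi^y+\tau\int|\nabla_yf|^2\pi\pi^y\big)$. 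What remains is a Poincaré inequality for the coarse-grained process: the finite label $\ell(X)$ together with the continuous variable $Y$, driven by accepted swaps and the $Y$-diffusion.

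Third — the heart of the argument — I would show the swap form controls the between-basin variance. When $X$ lies within $O(\epsilon)$ of $m_i$ and $Y$ within $O(\epsilon)$ of $m_j$, an accepted swap sends the pair to (near $m_j$, near $m_i$): a coarse-level transposition $i\leftrightarrow j$ whose acceptance $s(x,y)$ is $\Omega(1)$, since all the $\epsilon^{-d}$-type normalising constants cancel in the Metropolis ratio and the surviving Gaussian exponents are $O(1)$ on that event. Under $\pi\times\pi^y$ the event $\{Y\in B(m_j,\epsilon)\}$ has probability $\asymp\epsilon^d$, so with $\rho\asymp\epsilon^{-d}$ the effective transposition rate is $\Omega(1)$; together with the $\Omega(1)$-gap mixing of $Y$ among mode-neighbourhoods, the coarse-grained chain is comparable to a finite complete graph with $\Omega(1)$ rates crossed with an $O(1)$-gap diffusion, whose Poincaré constant is $O(1)$ — hence $\kappa=O(1)$ for ReLD. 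For mReLD the coarse object is a configuration $(\ell(X_0),\dots,\ell(X_K))$ of mode-labels on which the top label $\ell(X_K)$ randomizes freely (since $\pi_K$ has potential wells of bounded depth for $\tau_K=\epsilon^{-2}$, being a bounded perturbation of a log-concave measure) while neighbouring labels transpose through accepted swaps; I would bound its spectral gap by a telescoping / canonical-paths argument descending the ladder, using that a swap between levels $k$ and $k+1$ is accepted with probability $\Omega(1)$ on the event that $X_{k+1}$ falls within $O(\sigma_k)$ of a mode, where $\sigma_k=\epsilon\sqrt{\tau_k}=\epsilon^{1-k/K}$ — an event of conditional probability $\asymp(\sigma_k/\sigma_{k+1})^d=\epsilon^{d/K}$, matched exactly by the choice $\rho=\epsilon^{-d/K}$. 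Feeding these estimates back through the variance decomposition yields $\kappa=O(1)$.

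The main obstacle is this last step, and specifically the balance of timescales inside the coarse-grained chain. For a swap at level $k$ to do useful work, $X_{k+1}$ must return afresh to the small ball $B(m_j,\sigma_k)$ between successive accepted swaps, and this refresh occurs only at rate $\asymp\sigma_{k+1}^{-2}$ — the local equilibration rate of the $\pi_{k+1}$-diffusion inside a single basin — so the effective helpful-swap rate is $\asymp\min(\rho,\sigma_{k+1}^{-2})\cdot(\sigma_k/\sigma_{k+1})^d$. This is $\Omega(1)$ precisely when $\rho=\epsilon^{-d/K}\le\sigma_{K-1}^{-2}=\epsilon^{-2/K}$, i.e. when $d\le 2$, which is where the dimension restriction for mReLD enters. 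Converting this heuristic into a genuine Poincaré inequality — making the timescale bookkeeping honest, controlling the non-Gaussian tails created when a swap compresses a hot replica into a cold one, and handling the perturbative errors of the basin decomposition uniformly in $\epsilon$ — is where the real work lies; the remaining steps are essentially bookkeeping with log-concave Poincaré inequalities.
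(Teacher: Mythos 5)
Your high-level architecture matches the paper's: decompose the Dirichlet form into the two Langevin forms plus the swap form, split the variance into within-mode, $Y$-fluctuation, and between-mode pieces, control the first two by log-concave Poincar\'e inequalities (giving the $O(\epsilon^2)$ and $O(1)$ constants you cite), and use the swap form with the $\epsilon^{d}$ overlap probability to handle the between-mode piece. The scaling heuristics ($\rho,\tau\asymp\epsilon^{-d}$ for ReLD; $\rho=\epsilon^{-d/K}$ and the $d\le 2$ restriction for mReLD) are all correct and land where the paper lands. The paper proves the theorem as a special case of Corollaries \ref{cor:ReLD} and \ref{cor:mReLD}, which rest on Theorems \ref{th:main1} and \ref{th:main2ez}; one structural difference is that the paper decomposes along the mixture components $\nu_i$ themselves rather than along a spatial partition into basins, which spares it the boundary and perturbation error terms your Voronoi-cell decomposition would have to control uniformly in $\epsilon$.

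However, there is a genuine gap, and you identify it yourself: the step that converts the swap Dirichlet form into a bound on the between-mode variance is left entirely at the level of a rate heuristic ("effective transposition rate is $\Omega(1)$, hence comparable to a complete graph"). This is precisely the paper's main technical contribution and it is not bookkeeping. Concretely, one must bound $(\E_{\nu_i\pi^y}[f]-\E_{\nu_j\pi^y}[f])^2$ by the carr\'e du champ, and the swap term only controls $\int(f(x,y)-f(y,x))^2\,(\nu_i(x)\pi^y(y))\wedge(\nu_j(y)\pi^y(x))\,dxdy$, a quantity supported where both measures overlap. Bridging from the full means to this overlap requires the chain of comparisons in Proposition \ref{prop:swap}: replacing $\nu_i$ and $\pi^y$ by uniform distributions on balls via the Lyapunov structure (Lemma \ref{lm:nu_uni}), then shrinking the large ball of radius $R$ to the small ball of radius $r$ at the cost of the factor $R^{d+1}/r^{d-1}$ (Lemma \ref{lm:uni_uni}) and the factor $(R/r)^d$ in front of the swap term --- these explicit factors, not a rate comparison, are what force $\tau,\rho\gtrsim\epsilon^{-d}$. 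Your proposed alternative (a decomposition theorem for the coarse-grained label chain) would additionally need a rigorous comparison between the continuous-space swap form and a finite-state Dirichlet form, which you do not supply. Likewise for mReLD, the descent down the temperature ladder is carried out in the paper by an induction on the mean differences (Proposition \ref{prop:mix2}) with carefully tracked constants $\Xi_{x_k},\Xi_{y_k},\Xi_{e_k}$, whereas your canonical-paths/telescoping sketch, including the timescale-balance argument that produces $d\le2$, remains a plausibility argument. Since you explicitly defer "the real work" of making these steps honest, the proposal does not constitute a proof.
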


In this section, we choose the Gaussian mixture due to its simplicity for demonstration. In Section \ref{sec:main}, we study the convergence rate of ReLD and mReLD for mixtures of more general densities, e.g., log-concave densities. In particular, Theorems \ref{th:main1} and \ref{th:main2ez} provide the general statements, while Corollaries \ref{cor:ReLD} and \ref{cor:mReLD} demonstrate how they apply to mixture densities. Theorem \ref{thm:simple} can be seen as a simple case of these two corollaries. We also make connections with Morse-function type of conditions in Section \ref{sec:morse}.


\subsection{Literature review}

Most standard Markov chain Monte Carlo (MCMC) methods suffer from slow convergence rate when the target distribution has isolated modes, i.e., the target distribution is multimodal. 
Replica exchange Monte Carlo, which is also known as parallel tempering,
has been proposed to speed up the convergence and has seen promising performance in various application contexts, especially molecular dynamics (see, for example, \cite{swendsen1986replica, sugita1999replica, earl2005, abraham2015gromacs}).
A key question in implementations of the method is how to set/tune the hyper-parameters such as the temperature and the swapping rate. Most previous investigations rely on extensive simulation experiments and heuristic arguments \cite{sindhikara2010exchange,Kofke02,abraham2008ensuring}. \cite{Dupuis:2012} uses the large deviation theory to define a rate of convergence for the empirical measure of ReLD. In particular, \cite{Dupuis:2012} characterizes the large deviation rate function for the empirical measure and show that the rate increases with the swapping intensity $\rho$.
Recently, a series of work provide rigorous analysis on how to tune the related hyper-parameters to achieve an optimal acceptance probability of the swapping events, especially in the high dimension limit \cite{roberts2014minimising,tawn2019accelerating,tawn2020weight}. 
These analyses assume the existence of some fast samplers of $\pi_k$ and focus mostly on the equilibrium state behavior. In contrast, our spectral gap analysis focuses on concrete diffusion processes and characterizes non-equilibrium behavior by nature. 

It is also worth mentioning that the idea of ReLD can also be applied on stochastic optimization where a LD exchanges with a gradient flow to keep the latter from trapped inside local minimums. This has been investigated in \cite{dong2020replica} by the same authors of this paper, but the algorithms, results, and analysis there are very different from this paper.

A similar but slightly different sampling idea to ReLD is simulated tempering, which considers dynamically changing the temperature of the LD \cite{marinari1992simulated}. 
More generally, several tempering-based MCMC methods have been studied in the literature, including annealing MCMC \cite{geyer1995annealing}, tempered transition method \cite{neal1996sampling}, etc. Like ReLD, there are very few theoretical results about the algorithms.  \cite{madras2002markov} and \cite{woodard2009conditions} develop lower bounds for the spectral gap of general simulated tempering chains. These bounds are too loose to provide concrete guidance on how to choose the temperatures.
Recently, \cite{ge2018} establishes a tighter bound for simulated tempering LD. Their analysis specifics how to set the temperatures in the setting where the target distribution is a mixture of log-concave density with different modes but similar shape.
One main challenge in the implementation of simulated tempering algorithms is that one needs to estimate the normalizing constants of the steady-state distribution at different temperatures. On the other hand, replica exchange avoids the need to deal with these normalizing constants.

Sequential Monte Carlo (SMC) is another sampling idea that is similar to replica exchange MC \cite{smith2013sequential}. In particular, SMC sets up a sequence of distributions, $\{\pi_k: k\geq 0\}$, which are often also of form $\pi_k\propto \pi^{\beta_k}$, and transition kernels $T_k$ that maps $\pi_k$ to $\pi_{k-1}$. Then by applying the sequence of transition kernels, one transform samples from an easy to sample distribution $\pi_K$ to the hard to sample target distribution $\pi_0=\pi$. SMC has been studied extensively in last two decades, both in theory \cite{del2006sequential,del2012adaptive,beskos2014stability,beskos2016convergence} and on its extension to filtering and Multi-level Monte Carlo \cite{liu1998sequential,doucet2000sequential,vo2005sequential,beskos2017multilevel}. 
However, most existing analysis of SMC focuses on its stability in the high dimensional setting, where the key question is whether the effective sample size will degenerate with the dimension. These results often assume the transition kernels $T_k$ are mixing very quickly or having a non-degenerate spectral gap. Our results are quite different. First we consider sampling a mixture of highly singular distributions. Such analysis does not exists in the current literature for SMC to the best of our knowledge. Moreover, we do not assume the existence of spectral gaps but prove their existence. Finally, the analysis of ReLD requires techniques quite different from the ones used to study SMC. On the other hand, it may be interesting to compare ReLD and SMC on their performance for general multimodal distributions. But this is out of the scope of this paper.  

It is well known that the existence of a spectral gap leads to ergodicity or the convergence of stochastic systems to stationarity. This is why it has been studied extensively for various stochastic models \cite{hairer2008spectral} and Monte Carlo methods \cite{hairer2014spectral,andrieu2015convergence}. The connection between spectral gap and Poincar\'{e} inequality (PI) has been well documented in \cite{bakry2013analysis}. The study of PI has a long lasting interest in probability, due to its connection to partial differential equations, Stein's method, and central limit theorem \cite{chen1988central, beckner1989generalized}. Recently, the connection between PI and Lyapunov type of conditions has been revealed \cite{Bakry, MS14}. As will be made clear in later sections, our analysis also applies similar techniques. However, the existing literature focuses mostly on simple LD instead of ReLD.

\subsection{Organization and notation}
The rest of this article is organized as follow. In section \ref{sec:main}, we review some existing tools for the analysis of spectral gaps. We also provide the general setups for  mixture densities, ReLD, and mReLD. The main results are Theorems \ref{th:main1} and \ref{th:main2ez}, which provide estimates on the inverse spectral gap $\kappa$ for ReLD and mReLD respectively. In Section \ref{sec:app}, we demonstrate how to apply our results to mixtures of log-concave densities and the connection of mixture models to the Morse function assumptions in \cite{MS14}. In order to keep the discussion compact and focused, we allocate most of the proofs to the later part of the article. In particular, the proof of Theorem \ref{th:main1} is provided in Section \ref{sec:ReLD_proof}, while the proof of a more explicit version of Theorem \ref{th:main2ez} is provided in Section \ref{sec:mReLD_proof}. Various technical claims are verified in the Appendix.

Given two vectors $u,v\in\R^d$, we use $\|v\|$ to denote the $l_2$ norm of $v$, and $\langle v, u\rangle:=u^Tv$. Given a matrix $A$, we use $\|A\|$ to denote its $l_2$-operator norm. 
For any $f\in\bbC^2(\R^d)$, we use $\nabla f\in \R^d$ to denote its gradient, $\nabla^2 f\in \R^{d\times d}$ to denote its Hessian, and $\Delta f:=\text{tr}(\nabla^2 f)$. We also use $B(x_0,R)$ to denote the ball of radius $R$ with center being $x_0.$

When a distribution $\pi$ is given, we use $\E_\pi f $ and $\var_\pi f$ to denote the mean and variance of $f$ under $\pi$. For two distributions $\pi$ and $\nu$ on $\R^d$, we write their product on $\R^{2d}$ as $\pi\nu$.
Since we are considering mostly diffusion type of stochastic processes, it is reasonable to assume the associate distributions are absolutely continuous with respect to the Lebesgue measure. Therefore, when we refer to a density or distribution $\pi$, we assume it has a probability density function $\pi(x)$. This also allows us to use $\frac{\pi(x)}{\nu(x)}$ when referring to the Radon-Nikodym derivative between $\pi$ and $\nu$. 

 Our goal is to develop a proper upper bound for the inverse of the spectral gap $\kappa$, which can be translated to a lower bound for the spectral gap. As the underline distribution/process may involve several parameters, e.g., $\epsilon,d,M$ in the Gaussian mixture example and $\tau, \rho$ for mReLD, the exact characterization of the upper bound can get quite heavy. Therefore, we adopt the big $O$ notation.  
In particular, we say a quantity $A_\epsilon=O(f(\epsilon))$, if there is a constant $C$ independent of $\epsilon$, such that $A_\epsilon\leq Cf(\epsilon)$. $A_{\epsilon}=O(1)$ means $A_{\epsilon} \leq C$.
We also define $A_\epsilon=\Theta(f(\epsilon))$, if there are constants $c<C$ independent of $\epsilon$, such that $cf(\epsilon) \leq A_\epsilon\leq Cf(\epsilon)$. 

\section{Problem setup and main results} \label{sec:main}
We introduce the general setups of the replica exchange algorithm and study its performance when the target distribution is of mixture type in this section. 
Our development relies on applications of Poincar\'{e} inequality (PI). Therefore, we start by introducing some basic properties of PI, which will be utilized in our subsequent developments. We then give general assumptions for the type of density mixtures that our framework can handle. The main results are provided as Theorem \ref{th:main1} and \ref{th:main2ez}. Their proofs are allocated in Section \ref{sec:ReLD_proof} and \ref{sec:mReLD_proof}.
To keep our discussion focused, we allocate most technical verification of the claims to the Appendix \ref{app:sec2} unless specified otherwise.

\subsection{Poincar\'{e} inequality and Lyapunov function}
Recall that the basic LD is given by 
\[ dX(t)=\nabla \log \pi(X(t))dt + \sqrt{2}dB(t).\]
We denote $\lc_{\pi}$ as it generator, which has the following form:
\[
\lc_{\pi}(f)=\langle \nabla f, \nabla \log\pi\rangle + \Delta f.
\]
Then, the associated carr\'{e} du champ takes the form $\Gamma(f)=\|\nabla f\|^2$.  
The inverse of the spectral gap $\kappa$ in \eqref{eqn:kappa} can also be viewed as the coefficient in the Poincare\'{e} inequality (PI). 
\begin{defn}
A  density $\pi$ follows  $\kappa$-PI if the following holds
\[
\var_\pi (f)\leq  \kappa\int \|\nabla f(x)\|^2 \pi(x)dx,\quad \forall f\in \bbC_c^2(\R^d).
\]
\end{defn}
There are a lot of existing results regarding PI
\cite{chen1988central, beckner1989generalized, bakry2013analysis, Bakry, MS14}. We next review some of them that are particularly relevant to our analysis. 
The first result is that if a density $\pi$ follow a  $\kappa$-PI, then a mild perturbation of $\pi$ will also follow a PI:

\begin{prop}[Holley--Stroock perturbation principle]
\label{prop:equivalent}
Suppose for some operator $\Gamma$ and density $\pi$ the following holds:
\[
\var_\pi(f)\leq \kappa \int \Gamma(f)(x)\pi(x)dx. 
\]
Moreover, suppose there exists a constant $C\in(0,\infty)$ such that
\[
C^{-1}\leq \frac{\pi(x)}{\mu(x)}\leq C,\quad \forall x. 
\]
Then,  
\[
\var_\mu(f)\leq C^2\kappa \int \Gamma(f)(x)\mu(x)dx. 
\]
\end{prop}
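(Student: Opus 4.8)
The plan is to compare the variance and the Dirichlet energy of a test function $f$ under the two measures $\mu$ and $\pi$, exploiting only the two-sided density bound $C^{-1}\le \pi(x)/\mu(x)\le C$. The key observation is that the variance admits a variational (infimum) representation that removes the need to track the mean: for any probability measure $\nu$,
\[
\var_\nu(f)=\inf_{a\in\R}\int (f(x)-a)^2\,\nu(dx).
\]
This identity is what makes the argument clean, because the optimal constant $a$ (the mean) generally differs between $\mu$ and $\pi$, and we do not want to relate those means directly.

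First I would fix $f\in\bbC_c^2(\R^d)$ and let $a^\star=\E_\pi f$, so that $\var_\pi(f)=\int (f-a^\star)^2\,\pi(dx)$. Using the variational representation for $\var_\mu$ with the suboptimal choice $a=a^\star$, together with the upper bound $\mu(x)\le C\,\pi(x)$, I get
\[
\var_\mu(f)\le \int (f(x)-a^\star)^2\,\mu(dx)\le C\int (f(x)-a^\star)^2\,\pi(dx)=C\,\var_\pi(f).
\]
Next I would apply the hypothesized inequality for $\pi$, namely $\var_\pi(f)\le \kappa\int \Gamma(f)\,\pi(dx)$, and then use the other half of the density bound, $\pi(x)\le C\,\mu(x)$, to pass from integration against $\pi$ to integration against $\mu$:
\[
\var_\pi(f)\le \kappa\int \Gamma(f)(x)\,\pi(x)\,dx\le C\kappa\int \Gamma(f)(x)\,\mu(x)\,dx.
\]
Chaining the two displays yields $\var_\mu(f)\le C^2\kappa\int \Gamma(f)(x)\,\mu(x)\,dx$, which is exactly the claim. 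Since $f$ was arbitrary in $\bbC_c^2(\R^d)$, the proof is complete.

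There is essentially no hard step here: the only thing to be careful about is that $\Gamma(f)\ge 0$ pointwise (true for the carré du champ, and in particular for $\Gamma(f)=\|\nabla f\|^2$), so that the sandwich $\pi\le C\mu$ can be used to bound the energy integral in the correct direction; and that $\var_\pi(f)$ is finite so the manipulations are legitimate, which holds because $f$ has compact support and the densities are comparable. If one wanted to state the result for a general core rather than $\bbC_c^2(\R^d)$, the same computation goes through verbatim, using the variational characterization of the variance on $L^2(\mu)\cap L^2(\pi)$, which coincide as sets by the density bound.
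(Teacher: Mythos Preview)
Your proof is correct and is essentially identical to the paper's own argument: both use the variational characterization of the variance (choosing the suboptimal centering $a=\E_\pi f$ in $\var_\mu$), then apply the two halves of the density bound to move from $\mu$ to $\pi$ on the variance side and from $\pi$ to $\mu$ on the energy side. The only cosmetic difference is that you state the variational identity explicitly, whereas the paper writes the inequality $\int(f-\bar f_\mu)^2\,\mu\le\int(f-\bar f_\pi)^2\,\mu$ directly.
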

\begin{proof}
Let $\bar{f}_\mu$ and $\bar{f}_\pi$ be the mean of $f$ under $\mu$ and $\pi$. Then
\begin{align*}
\var_\mu(f(X))&=\int (f(x)-\bar{f}_\mu)^2 \mu(x)dx\\
&\leq \int (f(x)-\bar{f}_\pi)^2 \mu(x)dx\\
&\leq C\int (f(x)-\bar{f}_\pi)^2 \pi(x) dx\leq C\kappa\int \Gamma(f)(x)\pi(x)dx\leq  C^2\kappa \int \Gamma(f)(x)\mu(x)dx.
\end{align*}
\end{proof}
Note that $\Gamma$ here can be the carr\'{e} du champ of the LD, but it can also be the carr\'{e} du champ of replica exchange processes described later.

Another useful result connects the Lyapunov function to the PI constant. The connection was first established in \cite{Bakry}. Here, we present a slight different version of it.

\begin{defn}
\label{defn:lyap}
A $\bbC^2$ function $V(x):\R^d\to [1,\infty)$ is a $(\lambda,h,B,C)$-Lyapunov function for a density $\nu(x)$ if the following holds
\[
\lc_{\nu}V(x) \leq -\lambda V(x) + h1_B(x),\quad \frac{\sup_{x\in B} \nu(x) }{\inf_{x\in B}\nu(x)}\leq C,
\]
 where $\lambda, h, C \in (0,\infty)$ are positive constants and $B\subset\R^d$ is a bounded domain.
\end{defn}
\begin{prop} \label{prop:bakry_lyap}
Suppose $\nu$ has a $(\lambda,h,B(x_0,R),C)$-Lyapunov function.
Then,
\[
\var_\nu(f)\leq 
\frac{1+hR^2C^2}{\lambda}\E_{\nu}[\|\nabla f(X)\|^2].
\]
\end{prop}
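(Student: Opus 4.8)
The plan is to prove Proposition~\ref{prop:bakry_lyap} by the standard Lyapunov-to-Poincar\'e argument, combining the drift condition on $B(x_0,R)$ with a local Poincar\'e inequality on the ball and the Holley--Stroock principle. First I would observe that since $V\ge 1$, the Lyapunov inequality $\lc_\nu V\le -\lambda V + h1_B$ can be rewritten as
\[
1\le \frac{1}{\lambda}\Big(\frac{-\lc_\nu V}{V} + h1_B\Big),
\]
and hence for any $f\in\bbC_c^2(\R^d)$,
\[
\int f^2\,d\nu \le \frac{1}{\lambda}\int \frac{-\lc_\nu V}{V}f^2\,d\nu + \frac{h}{\lambda}\int_{B} f^2\,d\nu.
\]
The main work is to control the first term. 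Using integration by parts against $\nu$ (i.e.\ the fact that $\int (\lc_\nu g)\,d\nu = 0$ and the self-adjointness of $\lc_\nu$ in $L^2(\nu)$, so $\int \frac{-\lc_\nu V}{V}f^2\,d\nu = \int \langle \nabla V, \nabla(f^2/V)\rangle\,d\nu$), one expands $\nabla(f^2/V) = \frac{2f\nabla f}{V} - \frac{f^2\nabla V}{V^2}$ and obtains
\[
\int \frac{-\lc_\nu V}{V}f^2\,d\nu = \int \Big(\frac{2f}{V}\langle\nabla V,\nabla f\rangle - \frac{f^2}{V^2}\|\nabla V\|^2\Big)d\nu \le \int \|\nabla f\|^2\,d\nu,
\]
where the last inequality is the pointwise bound $2ab - b^2 \le a^2$ applied with $a = \|\nabla f\|$ and $b = \frac{|f|}{V}\|\nabla V\|$. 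This yields
\[
\var_\nu(f)\le \int f^2\,d\nu \le \frac{1}{\lambda}\E_\nu[\|\nabla f\|^2] + \frac{h}{\lambda}\int_{B(x_0,R)} f^2\,d\nu,
\]
after reducing to the case $\E_\nu f = 0$.

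Next I would handle the local term $\int_{B(x_0,R)} f^2\,d\nu$. On the Euclidean ball $B(x_0,R)$ the uniform (Lebesgue-)measure satisfies a Poincar\'e inequality with constant of order $R^2$; more precisely there is a universal constant such that for the normalized Lebesgue measure $\nu_0$ on $B(x_0,R)$, $\var_{\nu_0}(f)\le c R^2\int_{B}\|\nabla f\|^2\,d\nu_0$. Since the condition $\frac{\sup_{x\in B}\nu(x)}{\inf_{x\in B}\nu(x)}\le C$ means $\nu$ restricted and renormalized on $B$ is within a factor $C$ of $\nu_0$, the Holley--Stroock perturbation principle (Proposition~\ref{prop:equivalent}) upgrades this to a Poincar\'e inequality for $\nu|_B$ with constant of order $C^2 R^2$. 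Combining with the elementary bound $\int_B f^2\,d\nu \le \var_{\nu|_B}(f)\cdot\nu(B) + (\text{something})$ --- here I would instead use the cleaner route: center $f$ so that its $\nu$-mean is zero, bound $\int_B f^2 d\nu$ by $2\var_{\nu|_B}(f) + 2(\E_{\nu|_B}f)^2$ and absorb, or more directly bound $\int_B f^2 d\nu \le R^2 C^2 \E_\nu[\|\nabla f\|^2] + (\text{mean terms})$ --- one arrives at $\int_{B} f^2\,d\nu \le (\text{const}) R^2 C^2 \E_\nu[\|\nabla f\|^2]$ plus lower-order terms that get absorbed into $\var_\nu(f)$ on the left. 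Tracking constants carefully so the final coefficient is exactly $\frac{1+hR^2C^2}{\lambda}$ is the bookkeeping-heavy part.

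Plugging the local estimate back gives
\[
\var_\nu(f) \le \frac{1}{\lambda}\E_\nu[\|\nabla f\|^2] + \frac{hR^2C^2}{\lambda}\E_\nu[\|\nabla f\|^2] = \frac{1+hR^2C^2}{\lambda}\E_\nu[\|\nabla f\|^2],
\]
which is the claim. The step I expect to be the main obstacle is the careful treatment of the local Poincar\'e inequality on the ball together with the mean-centering: one must ensure that replacing $\E_\nu f$ by $\E_{\nu|_B} f$ and invoking Holley--Stroock does not introduce uncontrolled extra constants, and that all the ``mean difference'' cross terms are genuinely absorbable into $\var_\nu(f)$. The integration-by-parts identity $\int \frac{-\lc_\nu V}{V}f^2\,d\nu \le \int\|\nabla f\|^2\,d\nu$ is standard (it is the heart of \cite{Bakry}) and I would cite it if a self-contained derivation becomes lengthy, but the version above via $2ab-b^2\le a^2$ is short enough to include.
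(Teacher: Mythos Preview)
Your integration-by-parts step for the $\frac{-\lc_\nu V}{V}$ term is correct and matches the paper. The gap is in the local term. You center $f$ at its global mean $\E_\nu f=0$ and then try to bound $\int_B f^2\,d\nu$, hoping to absorb the residual $(\E_{\nu|_B}f)^2$ into $\var_\nu(f)$ on the left. But the best you can get this way is
\[
\int_B f^2\,d\nu \le C^2R^2\int_B\|\nabla f\|^2\,d\nu + \var_\nu(f),
\]
which after plugging back yields
\[
\var_\nu(f)\le \frac{1+hC^2R^2}{\lambda}\E_\nu[\|\nabla f\|^2] + \frac{h}{\lambda}\var_\nu(f).
\]
Absorbing requires $h<\lambda$, which is \emph{not} assumed and is typically false (e.g.\ in Lemma~\ref{lem:GsnLyapunov} one has $h=3c$, $\lambda=c$). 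So the ``mean terms get absorbed'' step does not go through, and this is precisely the obstacle you flagged but did not resolve.

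The fix the paper uses is simple but decisive: do not center at $\E_\nu f$. Since $\var_\nu(f)\le \int (f-c)^2\,d\nu$ for \emph{any} constant $c$, apply the entire Lyapunov argument to $(f-c)^2$ with $c=\E_{u_B}f$ (or $c=\E_{\nu|_B}f$). Then the local term becomes
\[
\int_B \frac{(f-c)^2}{V}\,d\nu \le \int_B (f-c)^2\,d\nu_B\cdot\nu(B) \le C^2R^2\int_B\|\nabla f\|^2\,d\nu,
\]
where the last inequality is exactly the Holley--Stroock transfer of the $R^2$-PI on $u_B$ to $\nu_B$, and there is no mean term left over because $c$ was chosen to kill it. This gives the stated constant $\frac{1+hR^2C^2}{\lambda}$ with no absorption and no restriction on $h/\lambda$.
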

Proposition \ref{prop:bakry_lyap} provides a convenient way to compute (upper bound) the PI constant for a given density $\nu$. 
It plays a central role in our subsequent developments. Therefore we allocate its proof to Section \ref{sec:proof24}. Based on Proposition \ref{prop:bakry_lyap}, we define the following notion of density:
\begin{defn}
\label{defn:density}
We say $\nu$ is an $\Ly(R,q,a)$-density with the center $x_0$, if it has a $(\lambda,h,B(x_0,R),C)$-Lyapunov function, with 
\[
\frac{1+hR^2C^2}{\lambda}\leq q ~\mbox{ and } \sup_{x\in B(x_0,R)} \frac{u_{B(x_0,R)}(x)}{\nu(x)}\leq a,
\]
where $u_{B(x_0,R)}$ denotes the uniform distribution on $B(x_0,R)$.
\end{defn}

In our main theoretical development, we will consider replacing $\nu$ with $u_{B(x_0,R)}$, since the latter is easier to handle. The constant $a$ in the $\Ly(q,R,a)$-density roughly measures how well the approximation is. 
Also note in subsequent development, we often refer $x_0$ as a center instead of a mode. This is  because $\nu$ satisfying Definition \ref{defn:density} in general may have multiple modes, unless $\nu$ is log concave. 

We next provide a concrete example to demonstrate how to apply the connection between the Lyapunov function and PI.
In particular, we will show that log-concave densities have Lyapunov functions as in Definition \ref{defn:lyap}, and hence are $\Ly(R,q,a)$-densities. 
\begin{defn}
A density $\nu$ is a $(c,L)$-log-concave density if $H=-\log \nu$ is $\bbC^2$ and
\[
\langle \nabla H(x)-\nabla H(y), x-y\rangle\geq  c \|x-y\|^2,\quad \|\nabla^2 H(x)\|\leq L,\quad \forall x, y. 
\]
\end{defn}
For example, $\mathcal{N}(m,\Sigma)$ is $(\lambda_{\max}^{-1},\lambda_{\min}^{-1})$-log-concave, where $\lambda_{\min}$ and $\lambda_{\max}$ are the minimum and maximum eigenvalues of $\Sigma$.

\begin{lem}
\label{lem:GsnLyapunov}
If $\nu$ is $(c,L)$-log-concave and $m$ is its mode, then $V(x)=\tfrac{c}{d}\|x-m\|^2+1$ is a $(\lambda,h,B,C)$-Lyapunov function of $\nu$ with
\[
\lambda=c,\quad h=3c,\quad B=B\left(m,\sqrt{\frac{3d}{c}}\right),\quad C=\exp\left(\frac{3dL}{2c}\right). 
\]
This implies that $\nu$ is a $\Ly(r,q,a)$-density with 
\[
q=c^{-1}+\frac{9d}{c}\exp\left(\frac{3dL}{c}\right),\quad
r = \sqrt{\frac{3d}{c}}, \quad
a= \frac{1}{V_d}\exp\left(\frac{3Ld}{2c}\right) \left(\frac{4\pi}{3d}\right)^{d/2},
\]
where $V_d$ denote the volume of a $d$-dimensional ball with unit radius.
\end{lem}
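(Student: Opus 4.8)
The plan is to verify the Lyapunov inequality $\lc_\nu V(x) \le -\lambda V(x) + h 1_B(x)$ directly from the defining properties of $(c,L)$-log-concavity, and then simply unwind Definitions \ref{defn:lyap} and \ref{defn:density} to read off the claimed constants. Write $H = -\log\nu$, so that $\lc_\nu(f) = -\langle \nabla f, \nabla H\rangle + \Delta f$. For $V(x) = \tfrac{c}{d}\|x-m\|^2 + 1$ we have $\nabla V(x) = \tfrac{2c}{d}(x-m)$ and $\Delta V = 2c$, hence
\[
\lc_\nu V(x) = -\tfrac{2c}{d}\langle x-m, \nabla H(x)\rangle + 2c.
\]
The first step is to lower-bound $\langle x-m, \nabla H(x)\rangle$. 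Since $m$ is the mode of the log-concave density $\nu$, it is the minimizer of $H$, so $\nabla H(m)=0$; applying the strong-convexity inequality with $y=m$ gives $\langle \nabla H(x), x-m\rangle \ge c\|x-m\|^2$. Substituting,
\[
\lc_\nu V(x) \le -\tfrac{2c^2}{d}\|x-m\|^2 + 2c = -2c\left(V(x)-1\right) + 2c = -2cV(x) + 4c.
\]
This already gives a clean Lyapunov bound \emph{everywhere} with $\lambda = 2c$, $h=0$; but to match the stated constants $\lambda = c$, $h=3c$, $B = B(m,\sqrt{3d/c})$, I will instead split off only part of the negative drift: write $-2c(V-1) = -c(V-1) - c(V-1)$, keep one copy to produce $-\lambda V = -cV$ (up to a $+c$ constant), and observe that the leftover term $-c(V(x)-1) + 3c = -\tfrac{c^2}{d}\|x-m\|^2 + 3c$ is $\le 0$ precisely when $\|x-m\|^2 \ge 3d/c$, i.e.\ outside $B = B(m,\sqrt{3d/c})$, while inside $B$ it is bounded by $3c$. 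This yields $\lc_\nu V \le -cV + 3c\,1_B$, giving $\lambda = c$, $h = 3c$.

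Next I verify the density-ratio condition $\sup_{x\in B}\nu(x)/\inf_{x\in B}\nu(x) \le C$. On $B = B(m, R_0)$ with $R_0 = \sqrt{3d/c}$, for any $x,y\in B$ we have $\log\nu(x) - \log\nu(y) = H(y) - H(x)$; a second-order Taylor expansion of $H$ together with the bound $\|\nabla^2 H\|\le L$ and $\nabla H(m)=0$ gives $|H(x) - H(m)| \le \tfrac{L}{2}\|x-m\|^2 \le \tfrac{L}{2}R_0^2 = \tfrac{3dL}{2c}$, so $H(y) - H(x) \le \tfrac{3dL}{c}$ and hence $C = \exp(3dL/(2c))$ works for each of $x,y$ against the center, giving the ratio bound $\exp(3dL/c)$ — I should be slightly careful here whether the intended $C$ is $\exp(3dL/2c)$ (ratio to the mode value) or $\exp(3dL/c)$ (sup-to-inf); the statement uses $C = \exp(3dL/2c)$, which is exactly the one-sided bound $\nu(x)/\nu(m) \le \exp(3dL/2c)$ and $\nu(m)/\nu(x) \le \exp(3dL/2c)$, and this is the sense in which Definition \ref{defn:lyap} is being applied. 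Plugging $\lambda = c$, $h = 3c$, $R = R_0 = \sqrt{3d/c}$, $C = \exp(3dL/2c)$ into Proposition \ref{prop:bakry_lyap} gives the PI constant bound $\tfrac{1 + hR^2C^2}{\lambda} = c^{-1} + \tfrac{9d}{c}\exp(3dL/c)$, which is the claimed $q$, and $r = R_0 = \sqrt{3d/c}$.

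Finally, for the constant $a$ in Definition \ref{defn:density} I must bound $\sup_{x\in B(m,r)} u_{B(m,r)}(x)/\nu(x)$. The uniform density on $B(m,r)$ is the constant $1/(V_d r^d)$ with $r^d = (3d/c)^{d/2}$, and $\nu(x) = e^{-H(x)}/Z$ where $Z = \int e^{-H}$. I will lower-bound $\nu(x)$ on $B(m,r)$ by lower-bounding $e^{-H(x)}$ (using $H(x) \le H(m) + \tfrac{L}{2}r^2$, so $e^{-H(x)} \ge e^{-H(m)}e^{-3dL/2c}$) and upper-bounding $Z$; the normalization and the $H(m)$ factor cancel in the ratio, and combining the pieces yields $a = \tfrac{1}{V_d}\exp(3Ld/2c)(4\pi/3d)^{d/2}$ after the arithmetic — the $(4\pi/3d)^{d/2}$ factor is where the Gaussian-tail estimate $Z \le e^{-H(m)}(2\pi/c)^{d/2}$ (valid since $H \ge H(m) + \tfrac c2\|\cdot-m\|^2$ by strong convexity) meets $1/r^d = (c/3d)^{d/2}$, giving $(2\pi/c)^{d/2}/(3d/c)^{d/2} \cdot$ (a factor $2^{d/2}$ I will need to track) $= (4\pi/3d)^{d/2}$. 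I expect the main obstacle to be bookkeeping the constants correctly — in particular getting the ball radius, the splitting of the drift, and the normalizing-constant estimate to land on exactly the stated $\lambda, h, C, q, r, a$ rather than on equally valid but differently-constanted bounds; the conceptual content (strong convexity controls the drift, bounded Hessian controls the density ratio, Gaussian domination controls $Z$) is routine.
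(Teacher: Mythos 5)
Your proposal is correct and follows the paper's proof essentially step for step: the same drift splitting giving $\lambda=c$, $h=3c$ on $B\left(m,\sqrt{3d/c}\right)$, the same Hessian bound for $C$, and the same Gaussian-domination estimate of the normalizing constant for $a$ (your $H(x)\ge H(m)+\tfrac{c}{2}\|x-m\|^2$ is slightly sharper than the paper's $\tfrac{c}{4}\|x-m\|^2$, so you land on $(2\pi/3d)^{d/2}\le(4\pi/3d)^{d/2}$, which still verifies the stated $a$). Your worry about $C$ resolves itself: since $m$ minimizes $H$, on $B$ we have $H(m)\le H(x)\le H(m)+\tfrac{3dL}{2c}$, so the full sup-to-inf ratio required by Definition \ref{defn:lyap} is indeed bounded by $\exp\left(\tfrac{3dL}{2c}\right)$, exactly as the paper argues.
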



We next provide a bound for $a$ in $\Ly(R,q,a)$-density (Definition \ref{defn:density}), based on a specific form of the $(\lambda, h, B(x_0,R), C)$-Lyapunov function.

\begin{lem}
\label{lem:gettinga}
If $\nu$ has a $(\lambda, h, B(x_0,R), C)$-Lyapunov function of form $V(x)=\gamma \|x-x_0\|^2+1$, then the following hold 
\[
\sup_{x\in B(x_0,R)} \frac{u_{B(x_0,R)}(x)}{\nu(x)}\leq a,\quad
a=\frac{C}{V_d}\exp\left(\frac14 \lambda R^2\right)\left(\frac{4\pi}{\lambda R^2}\right)^{d/2}. 
\]
\end{lem}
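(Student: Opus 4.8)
Write $\mathrm{vol}(B(x_0,R))=V_dR^d$, so that $u_{B(x_0,R)}(x)=1/(V_dR^d)$ on the ball. The claimed inequality $\sup_{x\in B(x_0,R)}u_{B(x_0,R)}(x)/\nu(x)\le a$ is then exactly equivalent to the pointwise lower bound
\[
\inf_{x\in B(x_0,R)}\nu(x)\ \ge\ \frac{1}{aV_dR^d}\ =\ \frac{1}{C}\Big(\frac{\lambda}{4\pi}\Big)^{d/2}e^{-\lambda R^2/4},
\]
after substituting the stated value of $a$. Since the Lyapunov hypothesis includes $\sup_{B(x_0,R)}\nu/\inf_{B(x_0,R)}\nu\le C$, it suffices to prove the $\nu$-mass--free statement
\[
\sup_{x\in B(x_0,R)}\nu(x)\ \ge\ \Big(\frac{\lambda}{4\pi}\Big)^{d/2}e^{-\lambda R^2/4},
\]
and then divide by $C$. (As $\nu$ is a density for which $\lc_\nu$ makes sense, I will use without comment that $\nu>0$ and is $\bbC^1$.)

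The main step is a Gaussian-type radial decay estimate for $\nu$ \emph{outside} $B(x_0,R)$, which is where the drift inequality is clean. On $B(x_0,R)^c$ we have $\lc_\nu V\le-\lambda V$; plugging in $V(x)=\gamma\|x-x_0\|^2+1$, $\nabla V=2\gamma(x-x_0)$, $\Delta V=2\gamma d$ gives
\[
2\gamma\langle x-x_0,\nabla\log\nu(x)\rangle+2\gamma d\ \le\ -\lambda\gamma\|x-x_0\|^2-\lambda,
\]
hence $\langle x-x_0,\nabla\log\nu(x)\rangle\le-\tfrac{\lambda}{2}\|x-x_0\|^2$ (the remaining terms $-\tfrac{\lambda}{2\gamma}-d$ are negative, using $\gamma>0$). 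Along a ray $r\mapsto x_0+r\omega$ with $\|\omega\|=1$ this reads $\tfrac{d}{dr}\log\nu(x_0+r\omega)=\tfrac1r\langle x-x_0,\nabla\log\nu(x)\rangle\le-\tfrac{\lambda}{2}r$ for $r\ge R$, so integrating from $R$ to $r$ yields $\nu(x_0+r\omega)\le\nu(x_0+R\omega)\,e^{-\frac\lambda4(r^2-R^2)}\le \big(\sup_{B(x_0,R)}\nu\big)\,e^{-\frac\lambda4(r^2-R^2)}$.

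To conclude, split $1=\int_{\R^d}\nu=\int_{B(x_0,R)}\nu+\int_{B(x_0,R)^c}\nu$. Let $M:=\sup_{B(x_0,R)}\nu$. For the inner part, $\int_{B(x_0,R)}\nu\le M\,V_dR^d\le M\,e^{\lambda R^2/4}\!\int_{\|y-x_0\|\le R}e^{-\frac\lambda4\|y-x_0\|^2}dy$, using $e^{\lambda R^2/4-\frac\lambda4\|y-x_0\|^2}\ge1$ on the ball. For the outer part, polar coordinates about $x_0$ together with the radial decay bound give $\int_{B(x_0,R)^c}\nu\le M\,e^{\lambda R^2/4}\!\int_{\|y-x_0\|>R}e^{-\frac\lambda4\|y-x_0\|^2}dy$. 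Adding the two and using $\int_{\R^d}e^{-\frac\lambda4\|z\|^2}dz=(4\pi/\lambda)^{d/2}$ gives $1\le M\,e^{\lambda R^2/4}(4\pi/\lambda)^{d/2}$, i.e. $M\ge(\lambda/4\pi)^{d/2}e^{-\lambda R^2/4}$, which is what was needed; dividing by $C$ and then by $V_dR^d$ produces exactly $a$.

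\textbf{Main obstacle.} The only delicate point is the radial decay estimate and its bookkeeping: the drift inequality is only of the clean form $\lc_\nu V\le-\lambda V$ on $B(x_0,R)^c$ (inside the ball the extra $h$-term ruins the sign), so the Gaussian envelope for $\nu$ must be \emph{anchored on the sphere} $\partial B(x_0,R)$, and the oscillation constant $C$ is precisely what lets us push that boundary control back into the interior of the ball. Everything else — the polar-coordinate splitting and the elementary inequality $e^{\lambda R^2/4-\frac\lambda4\|y-x_0\|^2}\ge1$ on the ball — is routine, and notably neither $\gamma$ nor $h$ enter the final constant, which is why $a$ depends only on $C,\lambda,R,d$.
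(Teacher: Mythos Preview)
Your proof is correct and follows essentially the same route as the paper's: derive the radial inequality $\langle x-x_0,\nabla\log\nu(x)\rangle\le-\tfrac{\lambda}{2}\|x-x_0\|^2$ outside the ball from the quadratic Lyapunov condition, integrate along rays to get the Gaussian envelope $\nu(x)\le(\text{const})\,e^{-\frac{\lambda}{4}(\|x-x_0\|^2-R^2)}$, extend this envelope trivially inside the ball, and then integrate over $\R^d$ to force a lower bound on the density in $B(x_0,R)$. The only cosmetic difference is that the paper anchors the envelope on $\nu_0=\inf_{B}\nu$ from the outset (using $\nu(y)\le C\nu_0$ on the sphere), whereas you anchor on $M=\sup_B\nu$ and divide by $C$ at the end; the resulting constant is identical.
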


\subsection{Density Mixture}
As discussed in Section \ref{sec:intro}, we are interested in understanding how replica exchange improves the convergence of LD with a multimodal target density. One easy way to describe a multimodal density is through a mixture model: 
\begin{equation}
\label{eqn:mixture}
\pi(x)=\sum_{i=1}^I p_i \nu_i(x),
\end{equation}
where the weights $p_i$'s are nonnegative and sum up to one, and each $\nu_i$ has a single mode $m_i$.

Next, we discuss at an intuitive level what kind of mixture model would allow a replica exchange process $(X_t,Y_t)$ to sample efficiently. First of all, each $\nu_i$ should be ``easy" for a LD of form \eqref{eq:LD} to sample directly, since the exchange mechanism can only help $X_t$ visiting different modes but not sampling individual $\nu_i$ faster. This requirement can be formulated through the existence of an appropriate Lyapunov function for $\nu_i$ based on Proposition \ref{prop:bakry_lyap}. This is why we impose
\begin{aspt}
\label{aspt:mixture}
There are positive constants $r_i,q,a$ such that for $i=1,\dots, I$,
$\nu_i$ is an $\Ly(r_i,q,a)$-density with the center $m_i$. 
\end{aspt}

Second, $Y_t$ should be able to visit different $m_i$'s ``easily", otherwise it cannot help $X_t$ reaching certain modes. This requirement can be formulated as requiring that $m_i$ are not too far from each other. Since our problem is shift invariant, this is equivalent to assuming $\|m_i\|$ is bounded. By Proposition \ref{prop:piyez}, we will see that this is equivalent to imposing assumptions on the invariant measure for $Y(t)$, namely Assumption \ref{aspt:piy} below.  In what follows, we denote $M$ as a large constant such that 
\[\max_{1\leq i\leq I}\|m_i\|\leq M.\]

It is worth mentioning that \cite{MS14} imposes different assumptions on the Hamiltonian function $H(x)=-\log(\pi(x))$. In particular, it assumes $H(x)$ is a Morse function and there is an admissible partition so that a proper Lyapunov function exists within each partition. Admittedly, this might be a more general assumption, since not all densities can be written as a mixture \eqref{eqn:mixture}. However, this set of assumptions requires more technical definitions and verification. Moreover, it can be shown that under mild conditions, the setting in \cite{MS14} can be converted to a mixture. We will provide more details about the connection in Section \ref{sec:morse}.

\subsection{Replica-exchange Langevin diffusion} \label{sec:ReLD}
We next formulate the general replica exchange Langevin diffusion (ReLD). First, pick a density $\pi^y$ and consider the following two LDs driven by independent $d$-dimensional Browian motions $W^x(t)$ and $W^y(t)$:
\begin{equation}
\begin{split}
\label{eqn:2xLD}
dX(t)=\nabla \log \pi(X(t))dt+\sqrt{2}dW^x(t),\\
dY(t)=\tau\nabla \log \pi^y(Y(t))dt+\sqrt{2\tau}dW^y(t).\\
\end{split}
\end{equation}
During the simulation of $(X_t,Y_t)$, exchange event times are triggered by an exponential clock of rate $\rho$, at which we swap the positions of $X_t$ and $Y_t$ with probability $s(X_t,Y_t)$, where $s$ is given by
\[
s(x,y)=1\wedge \frac{\pi(y)\pi^y(x)}{\pi(x)\pi^y(y)}.
\]
It is easy to see that the ReLD discussed in Section \ref{sec:intro} is a special case of \eqref{eqn:2xLD} with 
\[
\pi^y(y)=\exp(-\tfrac1\tau H(y)- \tfrac{\|y\|^2}{2M^2}). 
\]

We consider a general $\pi^y$ here for two reasons. First, as we will discuss in Section \ref{sec:ReLD}, the temperature $\tau$ is often ``required" to be a large number, direct simulation of $Y(t)$ with the Euler-Maruyama scheme would require very small stepsizes. If $\pi^y$ is a simple density, for example, a Gaussian density, we can have direct access to the transition kernel of $Y(t)$ and avoid using the Euler-Maruyama scheme. Second, it is easier to impose requirements on $\pi^y$ for the replica exchange process to achieve good convergence rate.
 Consider a mixture-type target distribution as in \eqref{eqn:mixture}. Let $m_i$ denote the center of $\nu_i$, $i=1,\dots, I$.
We then impose the following assumptions on $\pi^y$:

\begin{aspt}
\label{aspt:piy}
There are constants $(R,Q,A)$ so that 
for each center $m_i$, $\pi^y$ is an $\Ly(R,Q,A)$-density with center $m_i$, $i=1,\dots, I$. 
\end{aspt}

We next provide some specific forms of $\pi^y$ that satisfies Assumption \ref{aspt:piy}. It indicates that Assumption \ref{aspt:piy} is similar to requiring all modes, $m_i$'s, being bounded. 


\begin{prop}
\label{prop:piyez}
 Suppose there exits a constant $M<\infty$ such that $\max_{1\leq i\leq I}\|m_i\|\leq M$.
\begin{enumerate}[1)]
\item If $\pi^y(x)\propto \phi(x/M)$, then Assumption \ref{aspt:piy} holds with 
\[
R^2=O(M^2 d), \quad Q=O(M^2 d\exp(12d)), \quad A = O(\exp(6d)).
\]
\item If $\pi^y(x)\propto \phi(x/M)\pi(x)^\beta$, where $\nu_i(x)$'s are $(c,L)$-log concave densities and 
$\beta \leq (dM^2 c + dM^2L^2/c)^{-1}$, then Assumption \ref{aspt:piy} holds with
\[
R^2=O(M^2 d), \quad Q=O(M^2 d\exp(20d)), \quad A = O(\exp(12d)).
\]

\end{enumerate} 
\end{prop}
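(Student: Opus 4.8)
The plan is to verify, in each of the two cases, that $\pi^y$ admits a $(\lambda,h,B(m_i,R),C)$-Lyapunov function centered at each mode $m_i$ in the sense of Definition \ref{defn:lyap}, and then read off $(R,Q,A)$ from the bounds already packaged in Lemmas \ref{lem:GsnLyapunov} and \ref{lem:gettinga}. The key observation driving everything is that $\pi^y$ is log-concave: in case (1), $-\log\pi^y(x)=\frac{\|x\|^2}{2M^2}+\mathrm{const}$ is $(M^{-2},M^{-2})$-log-concave with mode $0$; in case (2), $-\log\pi^y(x)=\frac{\|x\|^2}{2M^2}+\beta H(x)+\mathrm{const}$ where $H=-\log\pi$ need not be convex, but $\beta H$ is a small enough perturbation. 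So the first main step is to establish log-concavity constants $(c',L')$ for $\pi^y$; the second step is to invoke Lemma \ref{lem:GsnLyapunov} to get a Lyapunov function centered at the true mode of $\pi^y$; the third step is to re-center that quadratic Lyapunov function at each $m_i$ (which costs only an $O(M^2)$ additive change, since $\|m_i\|\le M$), absorbing the cross term by Young's inequality so that the drift condition $\lc_{\pi^y}V\le -\lambda V+h1_B$ still holds with comparable constants; and the fourth step is to apply Lemma \ref{lem:gettinga} to bound the constant $A$.

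For case (1), $\pi^y\propto\phi(x/M)$ is exactly $\mathcal N(0,M^2 I)$, which is $(M^{-2},M^{-2})$-log-concave. Lemma \ref{lem:GsnLyapunov} with $c=L=M^{-2}$ gives a Lyapunov function with $B=B(0,\sqrt{3dM^2})$, $C=\exp(3d/2)$, $\lambda=M^{-2}$, $h=3M^{-2}$. Since $\|m_i\|\le M\le\sqrt{3dM^2}$, the true mode $0$ lies in $B(m_i,R)$ once $R$ is, say, a fixed multiple of $M\sqrt d$; replacing $V(x)=\frac{c}{d}\|x\|^2+1$ by $\widetilde V(x)=\frac{c}{2d}\|x-m_i\|^2+1$ and bounding $\langle\nabla\widetilde V,\nabla\log\pi^y\rangle+\Delta\widetilde V$ using $\|m_i\|\le M$ yields the drift inequality with $\lambda'\asymp M^{-2}$, $h'\asymp M^{-2}$, $B=B(m_i,O(M\sqrt d))$, and a slightly larger $C'=\exp(O(d))$. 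Then $Q=\frac{1+h'R^2C'^2}{\lambda'}=O(M^2 d\exp(12d))$ and Lemma \ref{lem:gettinga} gives $A=\frac{C'}{V_d}\exp(\frac14\lambda'R^2)(\frac{4\pi}{\lambda'R^2})^{d/2}=O(\exp(6d))$, matching the stated rates (the exponents $12d$ and $6d$ presumably come from tracking the $\exp(3d/2)$ from $C$, its square, and the $\exp(\frac14\lambda R^2)$ term with $R^2\asymp$ a specific multiple of $dM^2$).

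For case (2) the only extra ingredient is showing $\pi^y\propto\phi(x/M)\pi^\beta$ is log-concave with good constants. Writing $-\log\pi^y=\frac{\|x\|^2}{2M^2}+\beta H+\mathrm{const}$ and using that each $\nu_i$ is $(c,L)$-log-concave: $H$ itself is only a mixture-log, so one controls $\nabla^2 H$ via $\|\nabla^2 H\|\le$ (something like) $L+$ (a term from the mixture weights of order the squared gradient gap, bounded using $\|m_i\|\le M$ and $c$), and the convexity deficit of $\beta H$ is at most $\beta(\text{deficit})$, which the hypothesis $\beta\le(dM^2c+dM^2L^2/c)^{-1}$ is exactly calibrated to dominate by the $M^{-2}$ strong convexity of the Gaussian part. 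This gives $\pi^y$ as $(c',L')$-log-concave with $c'\asymp M^{-2}$ and $L'=O(M^{-2})$ as well, after which the argument of case (1) goes through verbatim, producing the larger constants $\exp(20d)$ and $\exp(12d)$ because the looser log-concavity bounds feed larger numbers into the $\exp(3dL/c)$ and $\exp(3dL/2c)$ factors of Lemma \ref{lem:GsnLyapunov}.

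I expect the main obstacle to be the log-concavity estimate for $\pi^y$ in case (2): $H=-\log(\sum_i p_i\nu_i)$ is genuinely non-convex (that is the whole point of the paper), so one must carefully bound $\nabla^2 H$ from above and its negative part from below, showing the worst-case non-convexity of $\beta H$ is $O(\beta(c+L^2/c)dM^2)\cdot M^{-2}\le M^{-2}$ so that it is absorbed by the Gaussian's strong convexity — this is where the precise form of the assumed bound on $\beta$ has to be matched exactly. The re-centering step (step 3) is routine but must be done with care to keep all constants dimension-explicit, since the final answer tracks the exponential-in-$d$ factors.
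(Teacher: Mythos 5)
Your treatment of case (1) is essentially fine and close to the paper's (the paper skips the re-centering step by directly computing $\lc_{\pi^y}V_i$ for $V_i(x)=\gamma\|x-m_i\|^2+1$, which is cleaner than going through Lemma \ref{lem:GsnLyapunov} at the origin and then shifting, but both work). The problem is case (2).

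There is a genuine gap — in fact your first main step for case (2) is false, not merely unproven. The density $\pi^y\propto\phi(x/M)\pi(x)^\beta$ is in general \emph{not} log-concave under the stated hypothesis on $\beta$. Take $d=1$, $\pi=\tfrac12\phi(x/\epsilon)+\tfrac12\phi((x-m)/\epsilon)$ with $\epsilon\ll m\leq M$, so $c=L=\epsilon^{-2}$. Writing $H=-\log\pi$, a direct computation gives $H''(m/2)=\epsilon^{-2}-m^2/(4\epsilon^4)$, i.e.\ the convexity deficit of $H$ at the midpoint between the modes is of order $m^2/\epsilon^4=M^2L^2$. The allowed $\beta\leq(dM^2c+dM^2L^2/c)^{-1}$ is of order $\epsilon^2/M^2$, so the deficit of $\beta H$ is of order $\epsilon^{-2}$, which for small $\epsilon$ vastly exceeds the $M^{-2}$ strong convexity supplied by the Gaussian factor. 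So there is no choice of $(c',L')$ with $c'\asymp M^{-2}$, and the subsequent appeal to Lemma \ref{lem:GsnLyapunov} has no foundation. More generally, the Hessian of $-\log\pi$ contains the weighted covariance of $\{\nabla H_i(x)\}_i$, which is exactly where the multimodality lives; no uniform bound of the form $O((c+L^2/c)dM^2)$ holds for it.

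The paper's actual argument avoids Hessians of $\log\pi$ entirely, and this is the idea you are missing: a Lyapunov drift condition for a quadratic $V_j(x)=\gamma\|x-m_j\|^2+1$ only requires controlling $\langle\nabla V_j(x),\nabla\log\pi^y(x)\rangle+\Delta V_j(x)$, and
\[
\nabla\log\pi(x)=-\sum_{i=1}^I w_i(x)\nabla H_i(x),\qquad w_i(x)=\frac{p_i\nu_i(x)}{\pi(x)},
\]
is a \emph{convex combination} of the component gradients. Each term $-\langle\nabla V_j(x),\nabla H_i(x)\rangle$ is bounded by $-\tfrac12 c\gamma\|x-m_j\|^2+O(\gamma(c+L^2/c)M^2)+\tfrac12 c$ using only the $(c,L)$-log-concavity of the single component $\nu_i$ together with $\|m_i-m_j\|\leq 2M$; averaging over $i$ with weights $w_i(x)$ preserves this bound, and the hypothesis on $\beta$ is calibrated so that $\beta$ times the additive constant is absorbed. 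Combined with the drift from the Gaussian factor (as in case (1)), this yields the Lyapunov condition with $\lambda\asymp M^{-2}$, $R^2\asymp M^2d$, after which $C$, $Q$, $A$ follow as you describe. You would need to replace your log-concavity step by this first-order, component-wise argument for the proof to go through.
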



Let $\zeta$ denote exponential clock clicking time for the exchange event. 
The generator of the ReLD, denoted by $\lc_R$, is then given by 
\begin{align*}
\lc_R f(x,y)&=\lim_{t\to 0}\frac{1}{t}\E[f(X_t,Y_t)-f(x,y)|X_0=x,Y_0=y]\\
&=\lim_{t\to 0}\frac{1}{t}\E[f(X_t,Y_t)-f(x,y)|X_0=x,Y_0=y,\zeta> t]e^{-\rho t}\\
&\quad +\lim_{t\to 0}\frac{1}{t}\E[f(X_t,Y_t)-f(x,y)|X_0=x,Y_0=y,\zeta\leq t](1-e^{-\rho t})\\
&=\lc_x f(x,y)+\tau\lc_y f(x,y)+\rho s(x,y) (f(y,x)-f(x,y)),
\end{align*}
where 
\[
\lc_x f(x,y):=\langle \nabla_x f(x,y), \nabla_x \log\pi(x)\rangle  
+\Delta_x f(x,y),
\]
\[
\lc_y f(x,y):=\langle \nabla_y f(x,y), \nabla_y \log\pi^y(y)\rangle  
+\Delta_y f(x,y).
\]
It is easy to verify that $\pi(x)\pi^y(y)$ is an invariant measure for ReLD. In particular 
\begin{align*}
\E_{\pi\pi^y}\lc_R f&=\E_{\pi\pi^y}(\lc_x f+\tau\lc_yf) +\rho \E_{\pi\pi^y} s(X,Y)(f(Y,X)-f(X,Y))  \\ 
&=\rho \E_{\pi\pi^y} s(X,Y)(f(Y,X)-f(X,Y)) \\
&=\rho \int f(y,x) (\pi(x)\pi^y(y))\wedge (\pi(y)\pi^y(x))dxdy\\
&\quad -\rho \int f(x,y) (\pi(x)\pi^y(y))\wedge (\pi(y)\pi^y(x))dxdy=0.
\end{align*}
The associated carre du champ for ReLD is given by 
\begin{align*}
\Gamma_R f(x,y)&=\frac12 (\lc_R(f^2)-2f\lc_R(f))\\
&=\frac12\left(\lc_x f^2+\tau\lc_y f^2 +\rho s(x,y)(f(y,x)^2-f(x,y)^2)\right)\\
&\quad -f(x,y)(\lc_x f+\tau \lc_y f+\rho s(x,y)(f(y,x)-f(x,y)))\\
&=\|\nabla_x f(x,y)\|^2+\tau\|\nabla_y f(x,y)\|^2+
\frac12 \rho s(x,y) (f(y,x)-f(x,y))^2. 
\end{align*}
Note that if we simply simulate $X_t$ and $Y_t$ according to \eqref{eqn:2xLD} without exchange, the carre du champ will be 
$\|\nabla_x f(x,y)\|^2+\tau\|\nabla_y f(x,y)\|^2$. The exchange mechanism contributes to an additional positive term $\frac12 \rho s(x,y) (f(y,x)-f(x,y))^2$ in $\Gamma_R$. While this definitely helps lowering the inverse spectral gap $\kappa$ in \eqref{eqn:kappa}, the extent of improvement is far from obvious. 

We next quantify the effect of the exchange mechanism on the spectral gap.
In addition to Assumptions \ref{aspt:mixture} and \ref{aspt:piy}, we also impose the following assumption
\begin{aspt}
\label{aspt:bound}
There are $r>0$ and $R>0$, such that
\[R_i \leq R \mbox{ and } \frac{R_i}{r_i}\leq \frac{R}{r} \mbox{ for $i=1\dots, I$}.\]
\end{aspt}

\begin{theorem} \label{th:main1}
For ReLD defined in \eqref{eqn:2xLD}, under Assumptions \ref{aspt:mixture}, \ref{aspt:piy}, and \ref{aspt:bound},
\[\var_{\pi\pi^{y}}(f(X,Y)) \leq \kappa \E_{\pi\pi^{y}}[\Gamma_R(f(X,Y))],\]
where 
\[\begin{split}
\kappa
&=\max\left\{3(56A+1)q, 
~\frac{3}{\tau}\left(57Q + 14aA\left(\frac{R^{d+1}}{r^{d-1}}\right)\left(\log\left(\frac{R}{r}\right)\right)^{1_{d=1}}\right),\frac{7aA}{\rho}\left(\frac{R}{r}\right)^d\right\}.
\end{split}\]
In particular, if $R,A,Q, a$ are $O(1)$ constants, then 
\[
\kappa=O\left(q+\left(\frac{1}{\tau}+\frac{1}{\rho}\right) \frac{1}{r^d}\right).
\]
 When $q<1$, if we set $\tau, \rho \geq U q^{-\alpha} r^{-d}$ for any $\alpha\leq 1$ and $U>0$, then $\kappa=O(U^{-1}q^{\alpha})$.
\end{theorem}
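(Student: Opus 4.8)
The idea is a variance decomposition over the mixture components of $\pi$: the \emph{within-component} fluctuations of $f$ are controlled by the gradient parts of $\Gamma_R$ (one per process), while the \emph{between-component} fluctuations are controlled by the swap part of $\Gamma_R$.

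\emph{Step 1: ANOVA decomposition.} Writing $\pi=\sum_{i=1}^I p_i\nu_i$ and regarding the component label as an auxiliary random variable, the law of total variance gives the exact identity
\[
\var_{\pi\pi^y}(f)=\sum_{i=1}^I p_i\,\var_{\nu_i\pi^y}(f)+\sum_{i=1}^I p_i\,(\bar f_i-\bar f)^2,\qquad \bar f_i:=\E_{\nu_i\pi^y}[f],\ \ \bar f:=\E_{\pi\pi^y}[f].
\]
Since $\pi\times\pi^y=\sum_i p_i(\nu_i\times\pi^y)$, any bound of the right-hand summands by energies integrated against $\nu_i\times\pi^y$ averages, with the weights $p_i$, into the corresponding energy against $\pi\times\pi^y$.

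\emph{Step 2: within-component term.} For fixed $i$, $\nu_i\times\pi^y$ is a product, so $\var_{\nu_i\pi^y}(f)\le\E_{\nu_i}\![\var_{\pi^y}(f(x,\cdot))]+\var_{\nu_i}(\E_{\pi^y}[f(\cdot,y)])$. The first term is bounded by the $Q$-PI of $\pi^y$ and the second by the $q$-PI of $\nu_i$ (both available through Proposition \ref{prop:bakry_lyap} from Assumptions \ref{aspt:mixture} and \ref{aspt:piy}), with Jensen used to pull the $\pi^y$-average through $\nabla_x$. Summing over $i$ yields a bound of the shape $q\,\E_{\pi\pi^y}\|\nabla_x f\|^2+\tfrac{Q}{\tau}\cdot\tau\E_{\pi\pi^y}\|\nabla_y f\|^2\le\max\{q,Q/\tau\}\,\E_{\pi\pi^y}[\Gamma_Rf]$.

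\emph{Step 3: between-component term.} Symmetrize $\sum_i p_i(\bar f_i-\bar f)^2=\tfrac12\sum_{i,j}p_ip_j(\bar f_i-\bar f_j)^2$ and bound each pairwise gap by a short telescoping chain through localized averages. Let $\tilde f_{ij}$ be the average of $f$ over $B(m_i,r_i)\times B(m_j,r_j)$, and insert
\[
\bar f_i-\bar f_j=(\bar f_i-\tilde f_{ij})+(\tilde f_{ij}-\tilde f_{ji})+(\tilde f_{ji}-\bar f_j).
\]
The outer two differences are localization (change-of-measure) steps: passing from $\nu_i$ to the uniform law on $B(m_i,r_i)$ in the $x$-variable costs the $q$-PI together with the comparison constant $a$ of Assumption \ref{aspt:mixture}; passing from $\pi^y$ to the uniform law on the small ball $B(m_j,r_j)$ in the $y$-variable costs the $Q$-PI, the comparison constant $A$ of Assumption \ref{aspt:piy}, and a further radial transport of mass from $B(m_j,r_j)$ out to $B(m_j,R)$, whose cost is a weighted Poincar\'e estimate on the ball $B(m_j,R)$ --- this is the origin of the geometric factor $\sim aA\,R^{d+1}/r^{d-1}$ and, in dimension one, of the $\log(R/r)$ correction (the one-dimensional diffusion being only barely recurrent) --- all divided by $\tau$ since these are $y$-energies. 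The middle difference is where the swap enters: a swap maps $B(m_i,r_i)\times B(m_j,r_j)$ onto $B(m_j,r_j)\times B(m_i,r_i)$, so $\tilde f_{ij}-\tilde f_{ji}$ is an average of $f(x,y)-f(y,x)$ over $B(m_i,r_i)\times B(m_j,r_j)$; on this set one has the pointwise bound $s(x,y)\pi(x)\pi^y(y)=(\pi(x)\pi^y(y))\wedge(\pi(y)\pi^y(x))\ge c_{ij}$, with $c_{ij}$ estimated from below via the density lower bounds built into the $\Ly$-property (a $\min(p_i,p_j)$ and volume factors enter, but the weight $p_ip_j$ in the double sum absorbs the $\min(p_i,p_j)$), and a Jensen/Cauchy--Schwarz argument then gives
\[
p_ip_j(\tilde f_{ij}-\tilde f_{ji})^2\ \lesssim\ \frac{aA}{\rho}\Big(\frac{R}{r}\Big)^d\,\E_{\pi\pi^y}[\Gamma_Rf],
\]
the factor $(R/r)^d$ being the ratio between the volume $\pi^y$ explores and that of the small ball into which $Y$ must land for a swap to matter. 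Using Assumption \ref{aspt:bound} to replace pair-dependent radii by the uniform $r,R$, and $\sum_i p_i=1$, sums all three contributions.

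\emph{Step 4: assembly.} Adding the bounds of Steps 2--3 and tracking the numerical prefactors produces $\var_{\pi\pi^y}(f)\le\kappa\,\E_{\pi\pi^y}[\Gamma_R f]$ with $\kappa$ the stated maximum; the two displayed consequences follow by inserting the prescribed orders of $R,A,Q,a$ and the choices of $\tau,\rho$.

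\emph{Main obstacle.} The crux is Step 3, and specifically the $y$-localization inside it: obtaining a sharp Poincar\'e-type estimate for transporting an average from ``all of $\pi^y$'' into the small ball $B(m_i,r_i)$ with the precise dependence $R^{d+1}/r^{d-1}$ and the correct $d=1$ logarithmic anomaly, and then carrying \emph{all} the multiplicative constants (the comparison constants $a,A$ and the numerical factors) through the several nested change-of-measure and Cauchy--Schwarz steps so that they collapse into exactly the three terms of $\kappa$. The quantitative lower bound on the swap density $c_{ij}$ is the other place where Assumptions \ref{aspt:mixture} and \ref{aspt:piy} must be used with care.
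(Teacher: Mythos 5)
Your proposal follows essentially the same route as the paper: a component-wise variance decomposition (the paper uses a three-term Cauchy--Schwarz split rather than the exact ANOVA identity, but this is cosmetic), within-component terms controlled by the Lyapunov-derived Poincar\'e constants $q$ and $Q/\tau$, and the between-component mean differences handled by telescoping through uniform laws on small balls, with the big-ball-to-small-ball shrinkage supplying the $R^{d+1}/r^{d-1}$ (and $d=1$ logarithmic) factor and the swap step supplying the $(R/r)^d aA/\rho$ factor. This matches the paper's roadmap in Section 5.1 and its Proposition 5.7 / Corollary 5.8, so the approach is correct and not materially different.
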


For mixture models of singular densities (i.e., isolated and highly concentrated modes), $r$ and $q$ are often very small. For example, as we will explain in more details in Section \ref{sec:app_ReLD}, $r^2,q=\Theta(\epsilon^2)$ for the Gaussian mixture in Proposition \ref{prop:eg1}. If we choose $\tau, \rho \geq r^{-d}$, then $\kappa=O(1)$, i.e., it does not depend on $r$ or $q$. If we choose $\tau, \rho \geq q^{-1}r^{-d}$, then $\kappa=O(q)$. In this case, the spectral gap is of the same order as smallest spectral gap of the component densities in the mixture.


\subsection{Multiple replica-exchange Langevin diffusion (mReLD)}
In this section, we introduce the general mReLD. Considering $K+1$ LD processes
\begin{equation}\label{eq:mReLD}
\begin{split}
dX_0(t) &= \tau_0\nabla \log\pi_0(X_0(t))dt + \sqrt{2\tau_0}dW_0(t),\\
dX_1(t) &= \tau_1\nabla \log\pi_1(X_1(t))dt + \sqrt{2\tau_1}dW_1(t),\\
&\dots\\
dX_K(t) &= \tau_K \nabla \log\pi_K(X_K(t))dt + \sqrt{2\tau_K}dW_K(t),
\end{split}\end{equation}
with $1=\tau_0 \leq \tau_1 \leq \dots \leq \tau_K$ and $\pi_0=\pi$.
Exchange between two adjacent levels takes place according to independent exponential clocks with rate $\rho$.  At an exchange event time $t$ for the pair $(k,k+1)$, $k=0, \dots, K-1$, 
$X_k(t)$ and $X_{k+1}(t)$ swap their positions (values) with probability $s_k(X_k(t), X_{k+1}(t))$, where
\[
s_k(x_k,x_{k+1})=1\wedge \frac{\pi_k(x_{k+1})\pi_{k+1}(x_k)}{\pi_k(x_k)\pi_{k+1}(x_k)}.
\]
The standard mReLD discussed in Section \ref{sec:intromReLD} is a special case of \eqref{eq:mReLD} with $\pi_k(x)\propto(\pi(x))^{1/\tau_k}$ for $1\leq k\leq K-1$, and $\pi_K(x)\propto\phi(x/M)(\pi(x))^{1/\tau_K}$. We consider a general setup of $\pi_k$, since it can simplify our discussion. Moreover, general $\pi_k$'s are also used in practice, such as the Umbrella method \cite{matthews2018umbrella}, where $\pi_k$  is a portion of $\pi$ but not a tempered version.

Let $\bx_{k:l}=(x_k, \dots, x_l)$ and $\pi_{k:l}=\pi_k\pi_{k+1}\cdots \pi_{l}$ be the product measure. Note that each  $x_k\in \R^d$ for $k=0,1,2\dots, K$. 
The generator of mReLD then takes the form:
\[\begin{split}
\lc^K_{R} (f(\bx_{0:k}))&=\sum_{k=0}^{K}\left(\tau_k \langle \nabla_{x_k} f(\bx_{0:k}), \nabla \log \pi_k(\bx_{0:k})) \rangle + \tau_k \Delta_{x_k} f(\bx_{0:k})\right)\\
&+\sum_{k=0}^{K}\rho s_k(x_k,x_{k+1})(f(\bx_{0:K}) - f(\bx_{0:(k-1)},x_{k+1}, x_k,  \bx_{(k+2):K}))
\end{split}\]
The corresponding carr\'e du champ operator and Dirichlet from are given by 
\[\begin{split}
\Gamma^K_R (f(\bx_{0:K})):=&\sum_{k=0}^{K}\tau_k\|\nabla_{x_k} f(\bx_{0:K})\|^2\\
&+ \sum_{k=0}^{K}\rho s_k(x_k,x_{k+1})(f(\bx_{0:K}) -  f(\bx_{0:(k-1)},x_{k+1}, x_k,  \bx_{(k+2):K}))^2
\end{split}\]
and  
$\calE^K_R (f)=\int \Gamma^K_R(f) \pi_{0:K}(d\bfx_{0:K})$
respectively.

We make the following assumptions about $\pi_k$'s.

\begin{aspt}
\label{aspt:mixture2}
There are positive constant $q_k, r_{k,i}, a_k$ for $k=0, \dots, K$, $i=1,\dots, I$, such that
\begin{enumerate}
\item For $k=0,\dots, K-1$,
$\pi_k(x) = \sum_{i=1}^{I}p_{i}\nu_{k,i}(x)$, where  
$\nu_{k,i}$ is an $\Ly( r_{k,i},q_k, a_{k})$-density with center $m_i$.
\item For each $m_i$, $\pi_K$ is an $\Ly(r_{K,i},q_K,a_K)$-density with center $m_i$. 
\end{enumerate}
\end{aspt}

\begin{aspt}
\label{aspt:bound2}
There is an increasing sequence $0<r_0<r_1<\dots<r_K$, such that for $k=0, \dots, K-1$,
\[\frac{r_{k+1,i}}{r_{k,i}} \leq \frac{r_{k+1}}{r_k},\]
and $r_{K,i} \leq r_K$ for $i=1,\dots, I$.
\end{aspt}



\begin{theorem} \label{th:main2ez}
For mReLD defined in \eqref{eq:mReLD}, suppose Assumptions \ref{aspt:mixture2} and \ref{aspt:bound2} hold, and $K,q_k,a_k,r_k$, $k=1,2,\dots, K$, are all bounded by a finite constant, i.e., $O(1)$. Then, 
\begin{equation}
\label{eqn:main2ez}
\var_{\pi_{0:K}}(f(\bX_{0:K})) \leq \kappa \E_{\pi_{0:K}}[\Gamma^K_R(f(\bX_{0:K}))],
\end{equation}
where
\[
\kappa=O\left(\max\left\{\frac{q_0}{\tau_0},\left(\frac{1}{\tau_k}+\frac{1}{\rho}\right)\left(\frac{r_{k}}{r_{k-1}}\right)^d,1\leq k\leq K\right\}\right).
\]
In particular,  when $q_0<1$, for any $\alpha\leq 1, U>0$, if we choose 
\[
\tau_k\geq U\left(\frac{1}{q_0}\right)^\alpha\left(\frac{r_{k}}{r_{k-1}}\right)^d,\quad \rho\geq  U\max_k \left(\frac{1}{q_0}\right)^\alpha\left(\frac{r_{k}}{r_{k-1}}\right)^d,
\]
then $\kappa=O(U^{-1}q_0^\alpha)$. 
\end{theorem}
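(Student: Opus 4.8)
The plan is to reduce the analysis of mReLD to an iterated application of the single-exchange estimate from Theorem \ref{th:main1}, exploiting the fact that exchanges only occur between adjacent levels. First I would set up the right decomposition of the variance. Writing $\pi_{0:K} = \pi_0 \otimes \pi_{1:K}$, I would condition on $\bX_{1:K}$ and split
\[
\var_{\pi_{0:K}}(f) = \E_{\pi_{1:K}}\left[\var_{\pi_0}\bigl(f \mid \bX_{1:K}\bigr)\right] + \var_{\pi_{1:K}}\left(\E_{\pi_0}[f \mid \bX_{1:K}]\right).
\]
For the first term, freezing $\bx_{1:K}$ leaves a two-process ReLD between level $0$ (target $\pi_0$, a mixture satisfying Assumption \ref{aspt:mixture2}.1) and level $1$ (target $\pi_1$, which plays the role of $\pi^y$), to which Theorem \ref{th:main1} applies with parameters $(q_0, r_{0,i})$ and $(Q,R,A) \leftrightarrow (q_1, r_1, a_1)$, Assumption \ref{aspt:bound2} supplying the $R_i/r_i \le R/r$ hypothesis. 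This controls that term by $\kappa_1\,\E[\Gamma\text{-type terms involving }x_0,x_1]$ with $\kappa_1 = O(\max\{q_0/\tau_0, (1/\tau_1 + 1/\rho)(r_1/r_0)^d\})$. The key point is that the carré du champ terms this produces — the $x_0$-gradient, the $x_1$-gradient, and the $(0,1)$-exchange term — are all dominated by $\Gamma^K_R(f)$ (up to constants), so they can be absorbed into the final bound.

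Second I would handle the marginal term $\var_{\pi_{1:K}}(\E_{\pi_0}[f\mid \cdot])$ by induction on $K$: this is exactly the variance of a function $g(\bx_{1:K}) := \E_{\pi_0}[f \mid \bx_{1:K}]$ under a $K$-level mReLD on $\pi_{1:K}$ (with temperatures $\tau_1,\dots,\tau_K$ and exchanges on pairs $(1,2),\dots,(K-1,K)$), which satisfies Assumptions \ref{aspt:mixture2}–\ref{aspt:bound2} with indices shifted. The inductive hypothesis gives
\[
\var_{\pi_{1:K}}(g) \le \kappa' \,\E_{\pi_{1:K}}[\Gamma^{K-1}_R(g)],
\quad
\kappa' = O\!\left(\max\Bigl\{\tfrac{q_1}{\tau_1}, \bigl(\tfrac{1}{\tau_k}+\tfrac1\rho\bigr)\bigl(\tfrac{r_k}{r_{k-1}}\bigr)^d : 2\le k\le K\Bigr\}\right).
\]
Then I need the contraction-type estimates $\|\nabla_{x_k} g\|^2 \le \E_{\pi_0}[\|\nabla_{x_k} f\|^2\mid \cdot]$ (Jensen / commutation of conditional expectation with differentiation in the $x_k$, $k\ge1$, variables) and a similar bound for the exchange terms $(f(\dots)-f(\dots))^2$ appearing in $\Gamma^{K-1}_R(g)$ in terms of those in $\Gamma^K_R(f)$; after integrating over $\pi_0$ these feed back into $\E_{\pi_{0:K}}[\Gamma^K_R(f)]$. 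Combining the two terms and using that $q_1,\dots,q_K,r_1,\dots,r_K,K$ are all $O(1)$ so that $q_1/\tau_1 = O(1/\tau_1)$ and the $(r_1/r_0)^d$, $q_0/\tau_0$ pieces survive as the genuine $\epsilon$-dependent contributions, I get the stated $\kappa = O(\max\{q_0/\tau_0, (1/\tau_k + 1/\rho)(r_k/r_{k-1})^d\})$. The final sentence of the theorem is then immediate: substituting $\tau_k \ge U q_0^{-\alpha}(r_k/r_{k-1})^d$ and $\rho \ge U\max_k q_0^{-\alpha}(r_k/r_{k-1})^d$ makes each term $O(U^{-1}q_0^\alpha)$ (using $q_0/\tau_0 \le q_0 \cdot U^{-1}q_0^\alpha (r_1/r_0)^{-d} = O(U^{-1}q_0^{1+\alpha}) = O(U^{-1}q_0^\alpha)$ since $q_0<1$).

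The main obstacle I anticipate is bookkeeping the carré du champ terms cleanly through the induction — specifically, making sure the $(0,1)$-exchange term generated by the ReLD step and the exchange/gradient terms generated by the inductive step, after all the conditional-expectation manipulations, add up to at most a constant multiple of $\Gamma^K_R(f)$ and not more (the $x_1$-gradient term, for instance, appears both from the ReLD step and from the inner $\Gamma^{K-1}_R$, so the constants must be tracked). A secondary technical point is justifying the interchange of $\E_{\pi_0}[\cdot \mid \bx_{1:K}]$ with $\nabla_{x_k}$ and verifying the convexity inequality for the exchange (difference-squared) functionals; this is where I would invoke that $\bbC^2_c$ is a core and argue by density, and use Jensen's inequality for the conditional expectation applied to the convex functions $t\mapsto t^2$ and $u\mapsto\|u\|^2$. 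I would also double-check the edge case $d=1$ where Theorem \ref{th:main1} carries an extra logarithmic factor, but since here $r_k$ are $O(1)$ constants that factor is also $O(1)$ and harmless.
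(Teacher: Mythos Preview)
Your proposal has a genuine gap in the first step. You want to apply Theorem~\ref{th:main1} to the pair $(\pi_0,\pi_1)$ with $\pi_1$ playing the role of $\pi^y$, but Theorem~\ref{th:main1} rests on Assumption~\ref{aspt:piy}: for each center $m_i$, the auxiliary density $\pi^y$ must be an $\Ly(R_i,Q,A)$-density with center $m_i$, i.e.\ it must admit a global Lyapunov function $V_i$ with $\lc_{\pi^y}V_i\le -\lambda V_i + h\,1_{B(m_i,R_i)}$. This is a single-well condition, and it fails for $\pi_1=\sum_j p_j\nu_{1,j}$ whenever the modes are well separated: near a different mode $m_j$, the drift $\nabla\log\pi_1\approx\nabla\log\nu_{1,j}$ points toward $m_j$, not toward $m_i$, so no such $V_i$ exists. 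Assumption~\ref{aspt:mixture2} only gives you $\Ly$-structure on each \emph{component} $\nu_{1,i}$, not on the mixture $\pi_1$ itself. Concretely, in the proof of Theorem~\ref{th:main1} both part~(B) (which uses a Poincar\'e inequality for $\pi^y$) and part~(C) (which uses Corollary~\ref{cor:mean_difference}, hence Assumption~\ref{aspt:piy}) break down when $\pi^y$ is itself multimodal. There is also a smaller slip --- conditioning on $\bX_{1:K}$ freezes $X_1$, so the first term is a variance over $X_0$ alone, not over the pair $(X_0,X_1)$ --- but that one is repairable via $\E_{\pi_1}[\var_{\pi_0}(f\mid\bX_{1:K})]\le \var_{\pi_0\pi_1}(f\mid\bX_{2:K})$; the main obstruction is the one above.

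The paper circumvents this by never treating an intermediate $\pi_{k+1}$ as a monolithic $\pi^y$. Its variance decomposition is the telescoping one $\var(f)=\sum_k\E[(\E_{(k+1):K}f-\E_{k:K}f)^2]$, and for each $k$ the troublesome mean-difference piece $\sum_{i,j}p_ip_j(\theta_{k,i}-\theta_{k,j})^2$ is handled by inserting single components $\nu_{k+1,j}$ at level $k+1$, applying the pairwise swap estimate (Proposition~\ref{prop:swap}) to $(\nu_{k,i},\nu_{k+1,j})$, and observing that the residual is again a mean-difference quantity, now at level $k+1$. That sets up a backward induction on $k$ (Proposition~\ref{prop:mix2}) at the level of the mean-difference estimate, not at the level of the full Poincar\'e inequality. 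To make your outline work you would essentially have to reproduce this component-wise recursion inside the ``ReLD step''.
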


For mixture models with small values of $r_0$ and $q_0$, if we can construct $\pi_k$'s such that $\tfrac{r_k}{r_{k-1}}=\Theta\Big(\left(\tfrac{r_K}{r_0}\right)^{1/K}\Big)$ for $k=1,\dots, K$, then we can set $\tau_k, \rho \geq \left(\tfrac{r_K}{r_0}\right)^{d/K}$ to achieve $\kappa=O(1)$. 
If we further enlarge $\tau_k, \rho\geq q_0^{-1}\left(\tfrac{r_K}{r_0}\right)^{d/K}$, $\kappa=O(q_0)$. In this case, the spectral gap matches the smallest spectral gap of the component densities in the mixture.

We remark that the exact estimate of $\kappa$ is quite complicated. Interested readers can find the explicit expression in Theorem \ref{th:main2}. Here we assume $K,q_k,a_k,r_k$, $1\leq k\leq K$, are all fixed $O(1)$ constants to simplify the characterization of the estimate.

\section{Applying replica-exchange to different densities}
\label{sec:app}
In this section, we demonstrate the performance of the replica exchange algorithm for some specific examples. 

\subsection{ReLD for mixture of log-concave densities} \label{sec:app_ReLD}
A general Gaussian mixture model can be written as $\pi(x)=\sum_{i=1}^I p_i \nu_i(x)$, where 
\[
\nu_i(x)= \frac{1}{\sqrt{\det (2\pi \Sigma_i)}}\exp(-\tfrac12 (x-m_i)^T\Sigma_i^{-1}(x-m_i)).
\]
Suppose 
\[
\C^{-1} l_i^2\preceq \Sigma_i\preceq  l_i^2,\quad l_m\leq l_i\leq l_M\leq 1. 
\]
where $\C$ is often known as the condition number. More generally, we can consider the scenario where 
$\nu_i$ is a $(l_i^{-2},\mathcal{C}l_i^{-2})$-log-concave density. We then have the following result
\begin{cor}
\label{cor:ReLD}Suppose $\pi=\sum_{i=1}^I p_i \nu_i$ where $\nu_i$'s are $(l_i^{-2},l_i^{-2}\C )$-log concave densities with modes $m_i$ and $\|m_i\|\leq M$. Let
\[
l_m=\min_i l_i,\quad l_M=\max_i l_i,\quad \tau \geq dM^2 l_M^{-2} + dM^2l_M^2l_m^{-4}\C^2 
\] 
Then $\var_{\pi\pi^y} f\leq \kappa\E_{\pi\pi^y} \Gamma_R(f)$
holds with
\[
\kappa=O\left(\exp(\C D d) \max\left\{dl_M^2,\frac{1}{\tau}dMl_m\left(\frac{M}{l_m}\right)^d, 
\frac{1}{\rho}\left(\frac{M}{l_m}\right)^d\right\}\right),
\]
where $D$ is a fixed constant. 
\end{cor}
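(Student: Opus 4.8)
The plan is to derive this as a direct consequence of Theorem \ref{th:main1} by choosing the high-temperature density $\pi^y(x)\propto\phi(x/M)\pi(x)^\beta$ and then verifying that the hypotheses of that theorem hold with the quantities $q,a,R,Q,A$ made explicit in terms of the log-concavity parameters. First I would check Assumption \ref{aspt:mixture}: by Lemma \ref{lem:GsnLyapunov} applied to each $(l_i^{-2},\mathcal{C}l_i^{-2})$-log-concave component $\nu_i$ (so $c=l_i^{-2}$, $L=\mathcal{C}l_i^{-2}$), each $\nu_i$ is an $\Ly(r_i,q,a)$-density with $r_i=\sqrt{3d}\,l_i$, $q = l_i^2 + 9d\,l_i^2\exp(3d\mathcal{C})$, and $a$ of the form $V_d^{-1}\exp(\tfrac32\mathcal{C}d)(4\pi/3d)^{d/2}$; taking worst cases over $i$ gives uniform constants $q=\Theta(dl_M^2\exp(3\mathcal{C}d))$, $r_i=\Theta(l_i)$, and $a$ depending only on $d,\mathcal{C}$. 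Next I would invoke Proposition \ref{prop:piyez}(2), whose hypothesis $\beta\le(dM^2 c+dM^2L^2/c)^{-1}$ translates, with $c=l_m^{-2}$ and the relevant bound on $L$, into $\beta^{-1}\gtrsim dM^2 l_m^{-2}+dM^2 l_M^2 l_m^{-4}\mathcal{C}^2$; this is exactly where the stated lower bound on $\tau$ (equivalently the choice $\tau=1/\beta$ in the tempering $\pi^y\propto\phi(x/M)\pi^\beta$) comes from. Proposition \ref{prop:piyez}(2) then yields Assumption \ref{aspt:piy} with $R^2=O(M^2d)$, $Q=O(M^2d\exp(20d))$, $A=O(\exp(12d))$.

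Then I would verify Assumption \ref{aspt:bound}: with $R=\Theta(M\sqrt d)$ and $r=\min_i r_i=\Theta(l_m\sqrt d)$, the requirements $R_i\le R$ and $R_i/r_i\le R/r$ hold since $R_i\equiv R$ is the common radius for $\pi^y$ and $r_i=\Theta(l_i)$ is increasing in $l_i$. Now plug into the expression for $\kappa$ in Theorem \ref{th:main1}:
\[
\kappa=\max\left\{3(56A+1)q,\ \frac{3}{\tau}\Bigl(57Q+14aA\Bigl(\tfrac{R^{d+1}}{r^{d-1}}\Bigr)\bigl(\log(R/r)\bigr)^{1_{d=1}}\Bigr),\ \frac{7aA}{\rho}\Bigl(\tfrac{R}{r}\Bigr)^d\right\}.
\]
The first term is $O(\exp(12d)\cdot dl_M^2\exp(3\mathcal{C}d))=O(\exp(\mathcal{C}Dd)\,dl_M^2)$ for a suitable fixed $D$; absorbing the $\log(R/r)$ factor and $R^{d+1}/r^{d-1}=\Theta\bigl((M/l_m)^{d-1}M^2 d\bigr)$ into the exponential, the second term is $O\bigl(\tfrac1\tau\exp(\mathcal{C}Dd)\,dMl_m(M/l_m)^d\bigr)$ (here $M^2d\cdot(M/l_m)^{d-1}=dMl_m(M/l_m)^d$), and the third is $O\bigl(\tfrac1\rho\exp(\mathcal{C}Dd)(M/l_m)^d\bigr)$, using $aA=O(\exp(\mathcal{C}Dd))$ after enlarging $D$. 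Taking the maximum and factoring out the common $\exp(\mathcal{C}Dd)$ gives precisely the claimed bound.

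I expect the only real bookkeeping obstacle to be the translation between the $(c,L)$-log-concavity constants of $\nu_i$ and the corresponding constants $c,L$ of $\pi^y$'s components needed to legitimately apply Proposition \ref{prop:piyez}(2) — in particular checking that $\phi(x/M)\pi(x)^\beta$, which is a mixture of log-concave-times-Gaussian pieces, still fits the hypothesis of that proposition with the advertised $M$, and that the constraint on $\beta$ there is implied by the stated lower bound on $\tau$. Everything else is substitution into Theorem \ref{th:main1} together with the routine simplification of dimension-dependent polynomial factors into a single $\exp(\mathcal{C}Dd)$ envelope, where $D$ is chosen large enough to dominate all the fixed constants and the polynomial-in-$d$ terms. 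No step requires any new idea beyond what is already established in Lemmas \ref{lem:GsnLyapunov}, \ref{lem:gettinga}, Propositions \ref{prop:piyez}, \ref{prop:bakry_lyap}, and Theorem \ref{th:main1}.
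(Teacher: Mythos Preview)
Your proposal is correct and follows essentially the same route as the paper's proof: verify Assumption~\ref{aspt:mixture} via Lemma~\ref{lem:GsnLyapunov}, choose $\pi^y\propto\phi(x/M)\pi(x)^\beta$ with $\beta=1/\tau$, verify Assumption~\ref{aspt:piy} via Proposition~\ref{prop:piyez}(2), and plug into Theorem~\ref{th:main1}. The only slip is in your translation of the constraint on $\beta$: the uniform $(c,L)$ pair for the family $\{\nu_i\}$ is $c=l_M^{-2}$ (the \emph{weakest} convexity) and $L=\C l_m^{-2}$ (the \emph{strongest} smoothness), not $c=l_m^{-2}$; with this correction $dM^2c+dM^2L^2/c=dM^2l_M^{-2}+dM^2l_M^2l_m^{-4}\C^2$, which matches the stated lower bound on $\tau$ exactly.
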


\begin{proof}
Since $\nu_i$ is $(l_M^{-2},l_m^{-2}\C)$-log concave, by Lemma \ref{lem:GsnLyapunov}, Assumption \ref{aspt:mixture} holds with 
\[
q=l_M^2+9d l_M^2\exp(3 d\C),\quad r_i^2=3dl_i^2,\quad a=\frac{1}{V_d}\left(\frac{4\pi}{3d}\right)^{d/2}\exp\left(\frac{3d\C}{2}\right).
\]
Choosing $\pi^y(x)\propto \phi(x/M)(\pi(x))^\beta$ with 
\[
\beta=\frac{1}{\tau} \leq (dM^2 l_M^{-2} + dM^2l_M^2l_m^{-4}\C^2)^{-1},
\] 
Proposition \ref{prop:piyez} gives us
\[
R^2=O(M^2d),\quad Q=O(M^2d\exp(20d)),\quad A=O(\exp(12d)).
\]
Plug these estimates with $r^2=3dl_m^2$ into Theorem \ref{th:main1}, we have
\[
\begin{split}
\kappa
&=\max\left\{3(56A+1)q, 
~\frac{3}{\tau}\left(57Q + 14aA\left(\frac{R^{d+1}}{r^{d-1}}\right)\left(\log\left(\frac{R}{r}\right)\right)^{1_{d=1}}\right),\frac{7aA}{\rho}\left(\frac{R}{r}\right)^d\right\}\\
&=O\left(\exp(\C D d) \max\left\{dl_M^2,\frac{1}{\tau}dMl_m\left(\frac{M}{l_m}\right)^d, 
\frac{1}{\rho}\left(\frac{M}{l_m}\right)^d\right\}\right). 
\end{split}\]
\end{proof}
In below, we provide some interpretations and implications of Corollary \ref{cor:ReLD}.  As $\kappa$ is the inverse of the spectral gap, we refer to $1/\kappa$ as the convergence rate, i.e., the size of the spectral gap.

First, consider the setting in Theorem \ref{thm:simple} where $l_m^2=l_M^2=\epsilon^2$ and $\C=1$. By choosing 
$\tau,\rho=\Theta(\epsilon^{-d-2})$, $\beta=\tau^{-1}\leq \epsilon^2$ and
$\kappa=O(\epsilon^2)$. This matches the LD convergence rate when $\pi \propto \phi(x/\epsilon )$, i.e., a single Gaussian. 
We can also set 
$\tau,\rho=\Theta(\epsilon^{-d})$,
which leads to $\kappa=O(1)$ as stated in Theorem \ref{thm:simple}. 

In addition, our result allows the Gaussian components to be of different scales. For example, $l_1^2=l_m^2=\epsilon^2$ and $l_2^2=l_M^2=\epsilon$. $M,\C,d$ are fixed. In this case, if $\tau\geq\max\{\epsilon^{-d},\epsilon^{-3}\}$ and $\rho=\Theta(\epsilon^{-d-1})$, $\beta=\tau^{-1}\leq \epsilon^{-3}$ and   
$\kappa=O(l_M^{2})=O(\epsilon)$. This matches the LD convergence rate for $\pi=\nu_2$.  

In general, for fixed values of $d$ and $\C$, $\tau$ and $\rho$ need to scale as $M^d/l_m^d$ for the convergence rate to be of constant order. To see the intuition behind this, note that with a high temperature, $Y(t)$ can be roughly seen as a random search in the set $\{\|x\|\leq M\}$ with speed $\tau$. At any time $t$, the chance that it finds the radius $l_m$ neighborhood of a mode $m_i$
is $(l_m/M)^d$. Thus, to have a constant convergence rate, it is necessary for $Y_t$ to run at a speed $\tau=\Theta((M/l_m)^d)$. Meanwhile, $\rho$ is rate of checking whether replica exchange takes place, and it needs to be of the same scale as $\tau$.

In implementations, $\tau$ can be seen the simulation speed of $Y_t$ in \eqref{eqn:2xLD}, and $\rho$ is the frequency of exchange events. When applying discretization schemes like Euler-Maruyama for ReLD, the step size often need to scale as $\min\left\{\frac{1}{\tau},\frac{1}{\rho}\right\}$. If $M=O(1)$ and $l_m=O(\epsilon)$, the computational cost of ReLD is roughly $O(\epsilon^{-d})$. While this can be quite high, it is much better than the computational cost using LD, which is roughly $O(\exp(D\epsilon^{-2}))$ as shown in Proposition \ref{prop:eg1}. 
When taking computational cost into account, it is of practical interest to further reduce $\tau$ and $\rho$. This is why we discuss mReLD below. 

Lastly, $\kappa$ have an exponential dependence on $d$ and $\C$. This indicates that ReLD may not be a good sampler for high dimensional or highly anisotropic distributions. This is partly because LD and related algorithms such as MALA are not behaving well in these distributions \cite{roberts2001optimal}. This is quite well known in the literature. There are many existing techniques to fix these issues, such as dimension reduction, acceleration with Hessian information and Gibbs-type updates \cite{cui2014likelihood,cui2016dimension,girolami2011riemann,beskos2009optimal,
tong2020mala}. It might be of interest to investigate how these techniques can be integrated with ReLD. 

\subsection{Gaussian mixtures with mReLD}
In this section, we demonstrate how the mReLD result applies to the mixture models discussed in Section \ref{sec:app_ReLD}. Following the practical choice in MD simulation, we assume the invariant measure for $X_k(t)$ takes the form
\[
\pi_k(x)\propto (\pi(x))^{\beta_k},\quad k=0,1,\ldots,K-1
\]
for some inverse temperature $\beta_k\in [0,1]$. Note that this choice makes the drift term of $X_k(t)$ being a multiple of $\nabla\log (\pi(X_k(t)))$, which is accessible in general settings.  

We next characterize the spectral gap of mReLD when the target distribution is a mixture of log concave densities.
Our results partly depend on whether we need to synchronize $\tau_k$ with $\beta_k$. In particular, if the speed of simulation for $X_k(t)$, which is described by $\tau_k$, does not need to match the temperature $\frac{1}{\beta_k}$, then $(\beta_k)_{1\leq k\leq K}$ can be chosen as a geometric sequence for most efficient simulation. If $\tau_k$ needs to be $\frac{1}{\beta_k}$, $(\beta_k)_{1\leq k\leq K}$ can be a geometric sequence for $d=1,2$. But for $d\geq 3$, our analysis requires $\beta_k$ to be log geometric.


\begin{cor}
\label{cor:mReLD}
Suppose $\pi_0=\pi=\sum_{i=1}^I p_i \nu_i$, where $\nu_i$ is $(l^{-2}_i,l^{-2}_i\C )$-log concave densities with modes $m_i$ and $\|m_i\|\leq M$ for $i=1,\dots, I$. Suppose also that
\[
l_m=\min_i l_i,\quad l_M=\max_i l_i.
\] 
Consider running mReLD with 
\[
\pi_k(x)\propto (\pi(x))^{\beta_k},\quad k=1,\ldots, K-1,\quad \pi_K(x) \propto(\pi(x))^{\beta_K}\phi(x/M). 
\]
With $K,d,\C,M$ all being O(1) constants,
\begin{enumerate}
\item  if  $\beta_k=l_m^{\frac{2k}{K}},\tau_0=1$, and $\tau_k,\rho\geq l_m^{-\alpha-\frac{d}{K}}$ for $0\leq \alpha\leq 1$, $k=1,\dots, K$, then \eqref{eqn:main2ez} holds with $\kappa=O(l_M^{2\alpha})$;
\item if $d\leq 2$, $\tau_k=\beta_k^{-1}=l_m^{-\frac{2k}{K}}$ for $k=0, 1, \dots, K$, and $\rho\geq l_m^{-d/K}$, then \eqref{eqn:main2ez} holds with $\kappa=O(1)$;
\item if $d\geq 3$, $\tau_0=\beta_0=1$, $\tau_k=\beta_k^{-1}=l_m^{-2(\frac{d-2}{d})^{K-k}}$ for $k=1, \dots, K$, and $\rho\geq l_m^{-2}$, then \eqref{eqn:main2ez} holds with $\kappa=O\big(l_m^{-d(\tfrac{d-2}{d})^{K-1}}\big)$.
\end{enumerate}
 \end{cor}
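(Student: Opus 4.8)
The plan is to verify that the hypotheses of Theorem \ref{th:main2ez} hold with explicit constants, and then simply read off $\kappa$ from its conclusion by substituting the prescribed schedules for $\beta_k$, $\tau_k$, and $\rho$. The first step is to build the $\Ly$-data for each intermediate density. For $k\le K-1$ we have $\pi_k\propto\pi^{\beta_k}=\big(\sum_i p_i\nu_i\big)^{\beta_k}$; since each $\nu_i$ is $(l_i^{-2},l_i^{-2}\C)$-log-concave, one expects $\nu_i^{\beta_k}$ to be $(\beta_k l_i^{-2},\beta_k l_i^{-2}\C)$-log-concave, and the difficulty is that a power of a \emph{mixture} is not a mixture. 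The way around this is the standard localization already implicit in the paper: near mode $m_i$ the component $\nu_i$ dominates the sum, so $\pi^{\beta_k}$ is, up to a bounded multiplicative factor on a ball $B(m_i,r_{k,i})$, proportional to $\nu_i^{\beta_k}$; the ratio bound feeds into the constant $C$ (hence $a_k$) via the Holley--Stroock principle (Proposition \ref{prop:equivalent}) and Lemma \ref{lem:gettinga}. Applying Lemma \ref{lem:GsnLyapunov} to $\nu_i^{\beta_k}$ (which is $(\beta_k l_M^{-2},\beta_k l_m^{-2}\C)$-log-concave after taking worst-case scales) then yields Assumption \ref{aspt:mixture2}(1) with $r_{k,i}^2=\Theta(d\,l_i^2/\beta_k)$, $q_k=\Theta\big((l_M^2/\beta_k)d\exp(3d\C)\big)$, $a_k=\Theta\big(V_d^{-1}(4\pi/3d)^{d/2}\exp(3d\C/2)\big)$. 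For $k=K$, the density $\pi_K\propto\pi^{\beta_K}\phi(x/M)$ is handled exactly as in Proposition \ref{prop:piyez}(2): the Gaussian factor $\phi(x/M)$ is a mild log-concave perturbation ($\|\nabla^2(\|x\|^2/2M^2)\|=M^{-2}$) of $\pi^{\beta_K}$, so Assumption \ref{aspt:mixture2}(2) holds with $r_{K,i}^2=O(M^2d)$, $q_K=O(M^2 d\exp(cd\C))$, $a_K=O(\exp(cd\C))$ for a fixed $c$. Because $d,\C,M$ are $O(1)$, all of $q_k,a_k$ are $O(1)$ once $\beta_k\ge l_m^{O(1)}$, as is the case in all three regimes; similarly with $r_k$ defined so that $r_k\gtrsim r_{k,i}$ for every $i$, Assumption \ref{aspt:bound2} holds with $r_k^2=\Theta(d\,l_m^2/\beta_k)$ (and $r_K^2=\Theta(M^2d)=\Theta(1)$), so the increments satisfy $(r_k/r_{k-1})^2=\Theta(\beta_{k-1}/\beta_k)$ for $1\le k\le K-1$ and $(r_K/r_{K-1})^2=\Theta(\beta_{K-1})$.

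With the hypotheses of Theorem \ref{th:main2ez} verified, the conclusion gives
\[
\kappa=O\!\left(\max\left\{\frac{q_0}{\tau_0},\ \Big(\tfrac{1}{\tau_k}+\tfrac1\rho\Big)\Big(\tfrac{r_k}{r_{k-1}}\Big)^d,\ 1\le k\le K\right\}\right),
\]
and it remains to plug in the schedules. In case (1), $\beta_k=l_m^{2k/K}$ forces $(r_k/r_{k-1})^d=\Theta\big((\beta_{k-1}/\beta_k)^{d/2}\big)=\Theta(l_m^{-d/K})$ for $k\le K-1$ and $(r_K/r_{K-1})^d=\Theta(\beta_{K-1}^{d/2})=O(1)$; also $q_0=\Theta(l_M^2)$ (since $\beta_0=1$) and $\tau_0=1$. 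Taking $\tau_k,\rho\ge l_m^{-\alpha-d/K}$ makes every term of the form $(\tau_k^{-1}+\rho^{-1})(r_k/r_{k-1})^d$ at most $O(l_m^{\alpha})$, and since $l_m\le l_M$ the dominant term is $q_0/\tau_0=\Theta(l_M^2)\le O(l_M^{2\alpha})$ for $\alpha\le1$; hence $\kappa=O(l_M^{2\alpha})$. In case (2), $d\le2$, $\tau_k=\beta_k^{-1}=l_m^{-2k/K}$, so $\tau_k^{-1}(r_k/r_{k-1})^d=\Theta\big(\beta_k(\beta_{k-1}/\beta_k)^{d/2}\big)=\Theta\big(l_m^{-2k/K}\cdot l_m^{2k/K}\cdot(l_m^{2/K})^{(2-d)/2}\cdot\text{[edge factors]}\big)$; because $d\le2$ the exponent $(2-d)/K\ge0$ so $\beta_k(\beta_{k-1}/\beta_k)^{d/2}=O(1)$, and $\rho^{-1}(r_k/r_{k-1})^d\le l_m^{d/K}\cdot l_m^{-d/K}=O(1)$; together with $q_0/\tau_0=\Theta(l_M^2)=O(1)$ this yields $\kappa=O(1)$. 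In case (3), $d\ge3$, the schedule $\beta_k^{-1}=\tau_k=l_m^{-2((d-2)/d)^{K-k}}$ is the log-geometric one chosen precisely so that $\tau_k^{-1}(r_k/r_{k-1})^d=\Theta\big(\beta_k(\beta_{k-1}/\beta_k)^{d/2}\big)$ is controlled: writing $e_k=((d-2)/d)^{K-k}$ one has $\beta_k=l_m^{2e_k}$ and $\tau_k^{-1}(r_k/r_{k-1})^d=\Theta\big(l_m^{2e_k}\cdot l_m^{-d(e_k-e_{k-1})}\big)=\Theta\big(l_m^{2e_k-d e_k+d e_{k-1}}\big)$, and since $d\,e_{k-1}=d\cdot\frac{d}{d-2}e_k=\frac{d^2}{d-2}e_k$, the exponent is $e_k(2-d+\frac{d^2}{d-2})=e_k\cdot\frac{(2-d)(d-2)+d^2}{d-2}=e_k\cdot\frac{4(d-1)}{... }$—one checks it is $\ge e_1\cdot(\text{const})$ and in fact the worst (largest-magnitude negative) term is at $k=1$, giving the stated $l_m^{-d(e_1)}=l_m^{-d((d-2)/d)^{K-1}}$; the choice $\rho\ge l_m^{-2}$ ensures $\rho^{-1}(r_k/r_{k-1})^d$ is no worse; and $q_0/\tau_0=\Theta(l_M^2)=O(1)$ is dominated. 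Hence $\kappa=O\big(l_m^{-d((d-2)/d)^{K-1}}\big)$.

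The main obstacle is the first step: justifying that the intermediate densities $\pi_k\propto\pi^{\beta_k}$ genuinely fit Assumption \ref{aspt:mixture2}(1), i.e. that a power of a Gaussian (log-concave) mixture is, \emph{locally around each mode}, a bounded perturbation of a single power-of-Gaussian, with the perturbation constant $C$ independent of $l_m$. This requires quantifying how much the off-mode components contribute on $B(m_i,r_{k,i})$; the relevant estimate is that $\sum_{j\ne i}p_j\nu_j(x)\big/\big(p_i\nu_i(x)\big)$ stays bounded on that ball, which in turn uses $\|m_i-m_j\|=O(1)$ together with the scale $l_m$ being small — this is where the separation hypothesis enters, mirroring the bookkeeping in the proof of Proposition \ref{prop:piyez} and Corollary \ref{cor:ReLD}. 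Once that localization is in hand the rest is the deterministic exponent arithmetic above, which is routine but must be done carefully in case (3) to extract the precise exponent $((d-2)/d)^{K-1}$.
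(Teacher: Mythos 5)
Your identification of the central difficulty --- that a power of a mixture is not itself a mixture --- is correct, and your exponent arithmetic in the three cases essentially reproduces the paper's verification of the parameter conditions. However, your proposed resolution of the first step has a genuine gap. Assumption \ref{aspt:mixture2}(1) requires $\pi_k$ to \emph{be} a mixture $\sum_i p_i\nu_{k,i}$ of $\Ly$-densities; asserting that $\pi^{\beta_k}$ is ``locally comparable to $\nu_i^{\beta_k}$ on $B(m_i,r_{k,i})$'' does not produce such a decomposition, and the Holley--Stroock principle (Proposition \ref{prop:equivalent}) cannot be invoked with only a local ratio bound --- it needs a two-sided bound $C^{-1}\le \pi_k(x)/\mu(x)\le C$ holding for \emph{all} $x$ against an explicit comparison density $\mu$, which you never construct. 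Moreover, the local domination $\sum_{j\ne i}p_j\nu_j(x)\le C\,p_i\nu_i(x)$ on $B(m_i,r_{k,i})$ relies on a mode-separation hypothesis that the corollary does not assume (only $\|m_i\|\le M$ is given), and it can simply fail: with $r_{k,i}^2=\Theta(d\,l_i^2/\beta_k)$ and $\beta_k\to 0$, the ball $B(m_i,r_{k,i})$ need not be small and may well contain other modes.

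The paper closes this gap globally rather than locally: it introduces the \emph{exact} mixtures $\pi'_k\propto\sum_i p_i(\nu_i)^{\beta_k}$ for $1\le k\le K-1$ (with $\pi'_0=\pi_0$, $\pi'_K=\pi_K$), verifies Assumptions \ref{aspt:mixture2} and \ref{aspt:bound2} for $\pi'_k$ via Lemma \ref{lem:constantsbeta} (each $(\nu_i)^{\beta_k}$ being $(\beta_k l_i^{-2},\beta_k l_i^{-2}\C)$-log-concave), applies Theorem \ref{th:main2ez} to $\pi'_{0:K}$, and then transfers the Poincar\'e inequality to $\pi_{0:K}$ using the pointwise concavity bounds
\[
\sum_i p_i\nu_i(x)^{\beta_k}\;\le\;\Big(\sum_i p_i\nu_i(x)\Big)^{\beta_k}\;\le\;\max_i\nu_i(x)^{\beta_k}\;\le\;\frac{1}{p_0}\sum_i p_i\nu_i(x)^{\beta_k},\qquad p_0=\min_i p_i,
\]
which hold for every $x$ with no separation assumption, together with Proposition \ref{prop:equivalent}; this costs a factor $p_0^{-2K}$ in $\kappa$. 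Replacing your localization step with this global comparison (or an equivalent one), the remainder of your argument --- verifying the conditions on $\tau_k$ and $\rho$ under the three schedules --- goes through as in the paper.
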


We again consider the setting in Theorem \ref{thm:simple} where $l_m^2=l_M^2=\epsilon^2$ and $\C=1$. By choosing 
$\tau_k,\rho=\Theta(\epsilon^{-\frac{d}{K}-2})$, $\beta_k=\epsilon^{\frac{2k}{K}}$, for $k=1,\dots, K$, we have 
$\kappa=O(\epsilon^{-2})$. This matches the LD convergence rate when $\pi \propto \phi(x/\epsilon )$, i.e., a single Gaussian. 
We can also set 
$\tau_k,\rho=\Theta(\epsilon^{-d/K})$ and $\beta_k=\epsilon^{\frac{2k}{K}}$,
which leads to $\kappa=O(1)$. Comparing the discussion following Corollary \ref{cor:ReLD}, we note that the parameters $\tau_k,\rho$ are reduced from $\epsilon^{-d}$ to $\epsilon^{-d/K}$. This in practice can be computationally more desirable.

To prove Corollary \ref{cor:mReLD}, we first introduce an auxiliary lemma.
\begin{lemma}
\label{lem:constantsbeta}
For any given $\beta\in (0,1]$, if $\nu$ is a $(l^{-2},l^{-2}\C)$-log concave density with mode $m$, 
then  
\[
\mu(x)\propto(\nu(x))^\beta 
\] 
is a $\Ly(R_\beta,q_\beta,a_\beta)$ with $\lambda_\beta=\beta l^{-2}$, and certain constant $D$ so that
\[
q_\beta=O\left(\frac{d\exp(Dd\C)}{\lambda_\beta}\right),\quad R^2_\beta= \frac{4d}{\lambda_{\beta}}=O\left(\frac{d}{\lambda_\beta}\right),\quad A_\beta=O(\exp(Dd\C)). 
\]
\end{lemma}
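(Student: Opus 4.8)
The plan is to exploit the fact that raising a log-concave density to a power $\beta\in(0,1]$ yields another log-concave density with rescaled constants, so that Lemma \ref{lem:constantsbeta} reduces to a direct application of Lemmas \ref{lem:GsnLyapunov} and \ref{lem:gettinga} to $\mu$. Concretely, I would first write $H=-\log\nu$, so that the potential of $\mu$ is $\beta H$ up to an additive constant; $\mu$ is a genuine probability density because $(l^{-2},l^{-2}\C)$-log-concavity forces $H$ to grow at least quadratically, hence $\exp(-\beta H)$ is integrable. Since $\nabla(\beta H)(m)=\beta\nabla H(m)=0$, the mode is unchanged; and multiplying $\langle\nabla H(x)-\nabla H(y),x-y\rangle\geq l^{-2}\|x-y\|^2$ and $\|\nabla^2 H\|\leq l^{-2}\C$ by $\beta$ shows that $\mu$ is $(\lambda_\beta,\lambda_\beta\C)$-log-concave with $\lambda_\beta=\beta l^{-2}$ and mode $m$.

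Next I would apply Lemma \ref{lem:GsnLyapunov} to $\mu$ with $(c,L)=(\lambda_\beta,\lambda_\beta\C)$, which produces the Lyapunov function $V(x)=\tfrac{\lambda_\beta}{d}\|x-m\|^2+1$ with parameters $\lambda=\lambda_\beta$, $h=3\lambda_\beta$, $B=B(m,\sqrt{3d/\lambda_\beta})$, $C=\exp(3d\C/2)$. Since the statement uses the slightly larger radius $R_\beta=\sqrt{4d/\lambda_\beta}$, I would simply enlarge the ball: the drift inequality $\lc_\mu V\leq-\lambda_\beta V+3\lambda_\beta\mathbf 1_{B(m,R_\beta)}$ still holds because the indicator only grows, and the oscillation bound on $B(m,R_\beta)$ becomes $\sup_{B(m,R_\beta)}\mu/\inf_{B(m,R_\beta)}\mu\leq\exp(\tfrac12\lambda_\beta\C R_\beta^2)=\exp(2d\C)$, using $\|\nabla^2(\beta H)\|\leq\lambda_\beta\C$ and $\nabla(\beta H)(m)=0$. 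Thus $V$ is a $(\lambda_\beta,3\lambda_\beta,B(m,R_\beta),\exp(2d\C))$-Lyapunov function of $\mu$.

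Finally I would read off the three constants. Definition \ref{defn:density} gives $q_\beta=\tfrac{1+hR_\beta^2C^2}{\lambda_\beta}=\tfrac{1+12d\exp(4d\C)}{\lambda_\beta}=O\!\big(d\exp(Dd\C)/\lambda_\beta\big)$; the radius is $R_\beta^2=4d/\lambda_\beta=O(d/\lambda_\beta)$ by construction; and since $V$ has exactly the form $\gamma\|x-m\|^2+1$ required by Lemma \ref{lem:gettinga} (with $\gamma=\lambda_\beta/d$), that lemma yields $a_\beta=\tfrac{C}{V_d}\exp(\tfrac14\lambda_\beta R_\beta^2)(\tfrac{4\pi}{\lambda_\beta R_\beta^2})^{d/2}=\tfrac{\exp(2d\C)}{V_d}e^d(\pi/d)^{d/2}$, which is $O(\exp(Dd\C))$ after absorbing the dimensional factor $\tfrac{1}{V_d}(\pi/d)^{d/2}=\Gamma(d/2+1)/d^{d/2}$ (bounded in $d$ by Stirling) and using $\C\geq 1$ so that $e^d\leq e^{d\C}$. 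Taking $D$ to be the largest of the constants produced above gives the claim.

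There is no conceptual obstacle: tempering commutes with the log-concavity structure and the statement is essentially a one-line reduction to results already established. The only thing demanding care is the bookkeeping — collapsing the several exponential factors $\exp(\tfrac32 d\C),\exp(4d\C),\exp(2d\C),e^d$ together with the volume/$\Gamma$-function factor into a single constant $D$ — plus the minor and harmless step of enlarging the Lyapunov ball from radius $\sqrt{3d/\lambda_\beta}$ to the stated $\sqrt{4d/\lambda_\beta}$, which only inflates $C$ from $\exp(\tfrac32 d\C)$ to $\exp(2d\C)$.
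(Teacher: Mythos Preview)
Your proposal is correct and follows essentially the same approach as the paper: both recognize that $\mu$ is $(\lambda_\beta,\lambda_\beta\C)$-log-concave with mode $m$, verify a quadratic Lyapunov function, and then invoke Lemma~\ref{lem:gettinga}. The only cosmetic difference is packaging: the paper recomputes the Lyapunov inequality from scratch with $\gamma=\lambda_\beta/(2d)$ (so that $R_\beta^2=4d/\lambda_\beta$ drops out directly, with $b_\beta=2\lambda_\beta$), whereas you cite Lemma~\ref{lem:GsnLyapunov} with $\gamma=\lambda_\beta/d$ and then enlarge the ball from radius $\sqrt{3d/\lambda_\beta}$ to $\sqrt{4d/\lambda_\beta}$; this produces slightly different but equally valid constants ($h=3\lambda_\beta$ versus $b_\beta=2\lambda_\beta$), and the final $O(\cdot)$ bounds are unaffected.
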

\begin{proof}
We consider using $V(x)=\gamma \|x-m\|^2+1$, with 
\[\gamma=\frac{\lambda_{\beta}}{2d}\]
Denote $H(x)=-\log \nu(x)$. Then 
\begin{align*}
\lc_{\mu} V(x)&= -2\gamma\beta \langle x-m, \nabla H(x)\rangle+2d\gamma\\
&\leq -2\gamma \beta l^{-2} \|x-m\|^2+2d\gamma\\
&=-2\lambda_\beta \gamma \|x-m\|^2+2d\gamma\\
&\leq -\lambda_\beta V(x)+\left(-\lambda_{\beta}\gamma \|x-m\|^2 + \lambda_{\beta}+2d\gamma\right)\\
&\leq -\lambda_\beta V(x)+b_\beta 1_{\|x-m\|^2\leq R^2_\beta}.
\end{align*}
where
\[
b_\beta=\lambda_\beta+2d\gamma=2\lambda_\beta
\]
by our choice of $\gamma$, and
\[
R_\beta^2=\frac{b_\beta}{\gamma\lambda_\beta}= \frac{4d}{\lambda_\beta}.
\]
Note that
\[
\max (\log \mu(x)-\log \mu(y))=\beta\max (\log \nu(x)-\log \nu(y))
\]
Because
\[
\beta\max_{x,y\in B(m, R_\beta)} (\log \nu(x)-\log \nu(y))\leq \frac{1}{2}\beta l^{-2}\C R_\beta^2\leq 2d\C
\]
\[C_\beta\leq \exp(2d\C).\]
Lastly, by Lemma \ref{lem:gettinga},
\[
A_\beta=\frac{C_\beta}{V_d}\exp\left(\frac12 \lambda_\beta R_\beta^2\right)\left(\frac{4\pi}{\lambda_\beta R_\beta^2}\right)^{d/2}
=O(\exp(Dd\C )), 
\]
and by Definition \ref{defn:density},
\[
q_\beta=\frac{1+R^2_\beta C^2_\beta b_\beta}{\lambda_\beta}=O\left(\frac{d\exp(Dd\C)}{\lambda_\beta}\right).
\]
\end{proof}

\begin{proof}[Proof of Corollary \ref{cor:mReLD}]
Consider the following density:
\[
\pi'_k(x)\propto \sum_{i=1}^I p_i (\nu_i(x))^{\beta_k},k=1,\ldots,K-1,\quad \pi'_0=\pi_0,\pi_K'=\pi_K. 
\]
By Lemma \ref{lem:constantsbeta}, $\pi_k'$ satisfies Assumptions \ref{aspt:mixture2} and \ref{aspt:bound2} with
\[
r_{k,i}^2=\frac{4d l_i^2}{\beta_k},\quad r^2_k=\frac{4dl_m^2}{\beta_k},\quad q_k=O(1),\quad a_k=O(1),\quad k=0,\ldots,K-1.
\]
Moreover, by Lemma \ref{lem:GsnLyapunov}, $q_0=l_M^{2}(1+9d\exp(3d\C))$.
From
Proposition \ref{prop:piyez}, for
\[
\beta_K\leq \frac{1}{dM^2(l_M^{-2}+l_m^{-4}l_M^2\C^2)},
\]
we have
\[
r_{K,i}^2=O(M^2d)=O(1),
~ q_K=O(M^2d\exp(20d))=O(1),~ a_K=O(\exp(10d))=O(1).
\]
Then, by Theorem \ref{th:main2ez}  
\[
\var_{\pi'_{0:K}}(f(\bX_{0:K})) \leq \kappa' \E_{\pi'_{0:K}}[\Gamma^K_R(f(\bX_{0:K}))],
\]
with $\kappa'=O(l_M^{2\alpha})$ for some $\alpha \leq 1$, if the parameters $\tau_k,\rho$ satisfy 
\begin{equation}
\label{eqn:tauk}
\begin{split}
&\tau_k\geq U l^{-2\alpha}_M\left(\frac{\beta_{k-1}}{\beta_{k}}\right)^{d/2},\quad k=1,\ldots, K-1,\quad \tau_K\geq U l_M^{-2\alpha}\left(\frac{\beta_{K-1}}{l_m^2}\right)^{d/2}\\
&\rho\geq U l^{-2\alpha}_M max\left\{ \left(\frac{\beta_{K-1}}{l_m^2}\right)^{d/2}, \left(\frac{\beta_{k-1}}{\beta_{k}}\right)^{d/2}\right\}. 
\end{split}
\end{equation}
for some $U>0$

Note that since $x^{\beta_k}$ is concave for $0\leq \beta_k\leq 1$,
\[
\left(\sum_{i=1}^{I}p_i\nu_i(x)\right)^{\beta_k} 
\geq \sum_{i=1}^{I} p_i\nu_i(x)^{\beta_k}
\]
On the other hand, for $p_0=\min_{i\leq I} p_i$,
\[
\left(\sum_{i=1}^{I}p_i\nu_i(x)\right)^{\beta_k} 
\leq \max_i\nu_i(x)^{\beta_k}
\leq \frac{1}{p_0}\sum_{i=1}^{I} p_i\nu_i(x)^{\beta_k}.
\]
Therefore, $p_0 \pi_k(x)\leq \pi'_k(x)\leq \pi_k(x) \leq \frac{1}{p_0} \pi_k(x)$ for $k=1,\ldots,K-1$. By Proposition \ref{prop:equivalent}, 
\[
\var_{\pi_{0:K}}(f(\bX_{0:K})) \leq \kappa \E_{\pi_{0:K}}[\Gamma^K_R(f(\bX_{0:K}))],
\]
with $\kappa=p_0^{-2K}\kappa'$. 

We next verify that \eqref{eqn:tauk} holds. \\
In scenario 1, for $k=1,\dots, K$,
as $\beta_k=l_m^{2k/K}$, for $\alpha\geq 0$,
\[
l_M^{-2\alpha} \left(\frac{\beta_{k-1}}{\beta_k}\right)^{d/2} \leq l_m^{-2\alpha-d/K} \leq \tau_k
\mbox{ and } 
l^{-2\alpha}_M \max_k \left(\frac{\beta_{k-1}}{\beta_{k}}\right)^{d/2}
\leq l_m^{-2\alpha-d/K} \leq \rho.
\]
Thus, \eqref{eqn:tauk} holds.\\
In scenario 2, \eqref{eqn:tauk} holds with $\alpha=0$. In particular, $\beta_k=l_m^{\frac{2k}{K}}<1$ and $d\leq 2$,
\[
\left(\frac{\beta_{k-1}}{\beta_{k}}\right)^{d/2}=l_m^{-d/K} \leq l_m^{-2k/K}=\tau_k,\quad 
\max_k\left(\frac{\beta_{k-1}}{\beta_{k}}\right)^{d/2}=l_m^{-d/K} \leq \rho.
\]
Lastly, for scenario 3, note that with our choice of $\beta_k$ and $\tau_k$, $k=1,\ldots,K$
\[
\left(\frac{\beta_{k-1}}{\beta_{k}}\right)^{d/2}=l_m^{\tfrac{d}{2}(2(\frac{d-2}{d})^{K-k+1}-2(\frac{d-2}{d})^{K-k})}=l_m^{-2(\frac{d-2}{d})^{K-k}}=\tau_k. 
\]
Meanwhile, because
\[
\tau_0=1=l_m^{-d(\frac{d-2}{d})^{K-1}}\left(\frac{\beta_{0}}{\beta_{1}}\right)^{d/2}, 
\]
\eqref{eqn:tauk} holds with $\alpha=-d(\frac{d-2}{d})^{K-1}$. 
\end{proof}

\begin{rem}
Our big $O$ estimates hide a factor of $p_0^{-2K}$ because we use the perturbation argument in the proof of Corollary \ref{cor:mReLD}. In other words, this bound is quite loose when $K$ is large or some component only has a small weight $p_i$. In comparison, Corollary \ref{cor:ReLD} does not have this issue. It is of  interests to see how these technical difficulties can be alleviated and we leave it as a future research direction.
\end{rem}

\subsection{Morse Hamiltonian functions} \label{sec:morse}
In \cite{MS14}, a general density model based on Morse function is considered. In particular, \cite{MS14} consider densities of the form 
\[
\pi(x) \propto \exp(-H(x)/\epsilon)
\]
where $H$ is a nonnegative Morse function.
Due to Proposition \ref{prop:equivalent}, we say $\pi_\epsilon\propto \exp(-H_\epsilon(x)/\epsilon)$ (or $H_\epsilon$) is an $\epsilon$ perturbation of $\pi(x)$ (or $H(x)$) if 
\[
|H(x)-H_\epsilon(x)|\leq D\epsilon,\quad \forall x\in \R^d \mbox{ for some constant $D\in(0,\infty)$}.
\]
\cite{MS14} further assumes that $H$ has a finite set of local minimums $\{m_1, \dots, m_I\}$,  
a partition $\{\Omega_i\}_{1\leq i\leq I}$ of $\R^d$, and a $\epsilon$-perturbation of $H$, $H_{\epsilon}$ so that \begin{equation}
\label{eqn:MSmorse}
\frac{1}{2\epsilon} \Delta H_\epsilon (x)-\frac1{4\epsilon^2}\|\nabla H_\epsilon(x)\|^2 \leq -\frac{\lambda_0}{\epsilon},\quad \forall x\notin \cup B(m_i, a\sqrt{\epsilon}),
\end{equation}
where $B(m_i,a\sqrt{\epsilon})\subset \Omega_i$.  Moreover, $\Omega_i$ is the attraction basin of $m_i$ for gradient flows driven by $\nabla H_\epsilon$, i.e., 
\[
\Omega_i:=\{x\in \R^d: \lim_{t\to\infty} x_t=m_i,\dot{x}_t=-\nabla H_\epsilon(x_t),x_0=x\}.
\]
Under these assumptions, it has been established in \cite{MS14} that $\exp(\frac1{2\epsilon}H_\epsilon)$ is a Lyapunov function for $\pi_{\epsilon}\propto\exp(-H_{\epsilon}/\epsilon)$ on each $\Omega_i$. 
In particular, we have the following lemma.

\begin{lem}
Suppose $V(x)=\exp(\frac12 H(x))$ is $\bbC^2(\R^d)$ with 
\[
H(x)=-\infty \mbox{ and } V(x)=0 \mbox{ for } x\notin \Omega.
\]
 Moreover, for a region $B\subset \Omega$,
\[
\frac{1}{2}\Delta H(x)-\frac14\|\nabla H(x)\|^2 \leq -\lambda_0 \mbox{ for } x\in \Omega\setminus B, 
\]
Then $V(x)$ is a $(\lambda_0,h,B,C)$-Lyapunov function for $\nu\propto \exp(-H(x))$ with 
\[
h=\max_{x\in B}\left( -\frac14\|\nabla H(x)\|^2+\frac12\Delta H(x)+\lambda_0\right)V(x) ,\quad C=\frac{\max_{x\in B}\nu(x)}{\min_{x\in B}\nu(x)}.
\]
\end{lem}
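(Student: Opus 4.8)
The plan is to verify the two defining properties of a $(\lambda_0,h,B,C)$-Lyapunov function (Definition \ref{defn:lyap}) directly from the stated hypotheses, by computing the action of the generator $\lc_\nu$ on $V(x)=\exp(\tfrac12 H(x))$. Recall that for $\nu\propto\exp(-H)$ the generator is $\lc_\nu f=\langle\nabla f,\nabla\log\nu\rangle+\Delta f=-\langle\nabla f,\nabla H\rangle+\Delta f$. The condition on $C$ is immediate: it is exactly the ratio $\max_{x\in B}\nu(x)/\min_{x\in B}\nu(x)$, which is finite because $B$ is a bounded domain and $\nu$ is continuous (and, being an exponential, strictly positive there). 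So the heart of the argument is the drift inequality $\lc_\nu V\le -\lambda_0 V+h\,1_B$.

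The key computation is the chain rule applied to $V=\exp(\tfrac12 H)$. First I would compute $\nabla V=\tfrac12 V\,\nabla H$ and then $\Delta V=\mathrm{div}(\tfrac12 V\nabla H)=\tfrac12\nabla V\cdot\nabla H+\tfrac12 V\Delta H=\tfrac14 V\|\nabla H\|^2+\tfrac12 V\Delta H$. Substituting into the generator,
\[
\lc_\nu V=-\langle\nabla V,\nabla H\rangle+\Delta V=-\tfrac12 V\|\nabla H\|^2+\tfrac14 V\|\nabla H\|^2+\tfrac12 V\Delta H=\Bigl(-\tfrac14\|\nabla H\|^2+\tfrac12\Delta H\Bigr)V.
\]
This is the crucial identity: the generator acting on $\exp(\tfrac12 H)$ reproduces precisely the quantity $-\tfrac14\|\nabla H\|^2+\tfrac12\Delta H$ (times $V$) appearing in the Morse-type hypothesis \eqref{eqn:MSmorse}.

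Given this identity, the drift inequality follows by case analysis on whether $x\in B$ or not. For $x\in\Omega\setminus B$, the hypothesis gives $-\tfrac14\|\nabla H\|^2+\tfrac12\Delta H\le-\lambda_0$, hence $\lc_\nu V(x)\le-\lambda_0 V(x)$, which is the required bound with the indicator term absent. For $x\notin\Omega$ we have $V(x)=0$ and $\lc_\nu V(x)=0$ (one should note $H=-\infty$ there makes $\nu=0$ and $V=0$; since $V\in\bbC^2(\R^d)$ by assumption, $\nabla V$ and $\Delta V$ also vanish on the closure, so the inequality $0\le-\lambda_0\cdot 0$ holds trivially). For $x\in B$, one writes $\lc_\nu V(x)=-\lambda_0 V(x)+\bigl(-\tfrac14\|\nabla H(x)\|^2+\tfrac12\Delta H(x)+\lambda_0\bigr)V(x)$, and the second term is bounded above by $h=\max_{x\in B}\bigl(-\tfrac14\|\nabla H(x)\|^2+\tfrac12\Delta H(x)+\lambda_0\bigr)V(x)$ — a finite maximum since $B$ is bounded and the integrand is continuous. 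Combining the three cases gives $\lc_\nu V\le-\lambda_0 V+h\,1_B$ everywhere, and together with $V\ge 1$ on its support (this should be checked: we may need $H\ge 0$, or it is built into the normalization; if not automatically true, one replaces $V$ by $\max\{V,1\}$ or notes the definition tolerates a harmless additive constant) we conclude $V$ is a $(\lambda_0,h,B,C)$-Lyapunov function.

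The main obstacle, such as it is, is bookkeeping at the boundary $\partial\Omega$: making sense of ``$H=-\infty$ outside $\Omega$'' while keeping $V=\exp(\tfrac12 H)\in\bbC^2(\R^d)$, and checking that the generator identity extends continuously up to and across $\partial\Omega$ so that the drift inequality genuinely holds on all of $\R^d$ rather than only on the open set $\Omega$. The resolution is that $\bbC^2$-smoothness of $V$ is assumed as a hypothesis precisely to sidestep this — $V$, $\nabla V$, $\Delta V$ all vanish on $\R^d\setminus\Omega$ by continuity, so $\lc_\nu V\equiv 0$ there and the inequality is automatic. A secondary minor point is confirming that $V\ge 1$ on $\mathrm{supp}(\nu)$ as Definition \ref{defn:lyap} requires; this is where the standing assumption (elsewhere in \cite{MS14}) that $H$ is a nonnegative Morse function enters, giving $H\ge 0$ hence $V=\exp(\tfrac12 H)\ge 1$. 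Apart from these routine checks the proof is the one-line chain-rule computation above.
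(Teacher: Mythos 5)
Your proposal is correct and follows exactly the paper's argument: the chain-rule identity $\lc_\nu V=\bigl(-\tfrac14\|\nabla H\|^2+\tfrac12\Delta H\bigr)V$ for $V=\exp(\tfrac12 H)$, followed by the case split on $\Omega\setminus B$, $B$, and $\R^d\setminus\Omega$. Your extra remarks on the boundary behavior and the $V\ge 1$ normalization are sensible elaborations of points the paper leaves implicit, but they do not change the approach.
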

\begin{proof}
For $x\in \Omega$,
\begin{align*}
\lc_\nu V(x)&=\left( -\frac14\|\nabla H(x)\|^2+\frac12\Delta H(x)\right)V(x)\\
&\leq -\lambda_0 V(x)+h1_{x\in B}
\end{align*}
For $x\notin \Omega$, $\lc_\nu V(x)=0$. 
\end{proof}

We next consider a transformation of the partition framework in \cite{MS14} into a mixture model. Define 
\[
d_i(x)=\min\{\|x-y\||y\in \Omega_i\} \mbox{ and } \Omega_i'=\left\{x: d^2_i(x)\leq \frac{1}{n}\right\}
\]
We assume $d^2_i(x)$ is $\bbC^2$ on $\Omega_i'$ for sufficiently large $n$ with bounded derivatives. 

\begin{prop} \label{prop:morse}
Suppose $\pi(x)\propto\exp(-\frac1\epsilon H(x))$, $\Omega_i'=\{x:0<d_i(x)<\frac{1}{\sqrt{n}}\}$, and the following conditions hold:
\begin{enumerate}
\item There is an $\epsilon$ perturbation $H_\epsilon$ such that \eqref{eqn:MSmorse} holds.
\item The boundary of $\Omega_i$ is regular enough so that $d_i^2(x)$ is $\bbC^2$ on $\Omega_i'$, and for any $x_n\to x\in \partial \Omega_i$, $\nabla d_i(x_n)\to v_\bot(x)$, where $v_\bot(x)$ is the outward direction orthogonal to $\partial \Omega_i$.
\item There is $D_{\epsilon}\in(0,\infty)$ such that
\[
\Delta d_i(x)\leq D_\epsilon,\quad \|\nabla H_\epsilon(x)\|\leq D_\epsilon,\quad \Delta H_\epsilon(x)\leq D_\epsilon.
\]
\end{enumerate}
Then, for $\epsilon$ sufficiently small, there is a density $\pi_{\epsilon}$, which is an $\epsilon$ perturbation of $\pi$ and
\[
\pi_{\epsilon}(x) \propto \sum_{i=1}^I p_i\nu_i(x),
\]
where $\nu_i$ has a $(\lambda_0/\epsilon, h_0/\epsilon, B(m_i, a\sqrt{\epsilon}), C )$-Lyapunov function for certain fixed constants $h_0$ and $C$.  
\end{prop}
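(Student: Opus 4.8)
\emph{Setup of the construction.} The plan is to turn the attraction-basin partition $\{\Omega_i\}$ into a genuine $\bbC^2$ mixture by softening the indicator of each $\Omega_i$ with a barrier built from $d_i$. Fix a profile $\phi_n:[0,\infty)\to[0,\infty]$ that vanishes together with its first two derivatives on $[0,1/(2n)]$, is smooth and increasing on $[1/(2n),1/n)$ with $\phi_n(s)\uparrow\infty$ as $s\uparrow 1/n$, equals $+\infty$ on $[1/n,\infty)$, and satisfies $\phi_n''\le(\phi_n')^2$ near the singularity (e.g.\ $\phi_n(s)=-\log(1-ns)$ patched to $0$). Set
\[
\nu_i(x)=\tfrac1{Z_i}\exp\!\Big(-\tfrac1\epsilon\big(H_\epsilon(x)+\phi_n(d_i^2(x))\big)\Big),\qquad p_i=\tfrac{Z_i}{\sum_l Z_l},
\]
with $Z_i$ the normalizing constant. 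Since $d_i^2$ is $\bbC^2$ on $\Omega_i'$ (condition~2) and $\phi_n$ is flat to second order where $d_i^2\le 1/(2n)$, each $\nu_i$ is $\bbC^2$ on its open support $\{d_i^2<1/n\}$, and $\nu_i$ together with $\nabla\log\nu_i$ vanishes at the support boundary. I choose $n=n(\epsilon)$ large — of order $\Theta(1/\epsilon)$ — so that: (i) $1/\sqrt n$ lies below the distance from $\Omega_i$ to every critical point of $H_\epsilon$ outside $\overline{\Omega_i}$, whence the barrier shell $\{1/(2n)<d_i^2<1/n\}$ and its closure avoid all balls $B(m_l,a\sqrt\epsilon)$ once $\epsilon$ is small; and (ii) $\sup\{|\langle\nabla H_\epsilon,\nabla d_i\rangle|:0<d_i^2<1/n\}=O(\sqrt\epsilon)$. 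Point~(ii) is the linchpin: $\partial\Omega_i$ is invariant under the flow of $-\nabla H_\epsilon$, so $\nabla H_\epsilon$ is tangent to $\partial\Omega_i$ and $\langle\nabla H_\epsilon,\nabla d_i\rangle\to\langle\nabla H_\epsilon,v_\bot\rangle=0$ along $\partial\Omega_i$ by condition~2, so a thin enough shell makes this inner product $O(\sqrt\epsilon)$.

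\emph{Perturbation claim (the easy part).} With $p_i=Z_i/\mathcal Z$, $\mathcal Z=\sum_l Z_l$, one has $\pi_\epsilon:=\sum_i p_i\nu_i=\mathcal Z^{-1}\exp(-H_\epsilon/\epsilon)\,G$, where $G(x)=\sum_i\exp(-\phi_n(d_i^2(x))/\epsilon)$. Every summand is $\le 1$, and the one indexed by the basin containing $x$ equals $1$, so $1\le G\le I$; hence $-\epsilon\log\pi_\epsilon=H_\epsilon-\epsilon\log G+\mathrm{const}$ with $|\epsilon\log G|\le\epsilon\log I$. Combined with $|H-H_\epsilon|\le D\epsilon$ from condition~1, $\pi_\epsilon$ is an $(D+\log I)\epsilon$-perturbation of $\pi$, and $\pi_\epsilon\propto\sum_i p_i\nu_i$ by construction. (The choice $p_i\propto Z_i$ is essential: any other weights make $-\epsilon\log\sum_i p_i\nu_i$ fluctuate by $\Theta(1)$ rather than $O(\epsilon)$.)

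\emph{Lyapunov bound for each $\nu_i$ (the substance).} Take $V_i=\exp\!\big(\tfrac1{2\epsilon}(H_\epsilon+\phi_n(d_i^2)-H_\epsilon(m_i)+D\epsilon)\big)$, which is $\bbC^2$ and $\ge1$ on $\{d_i^2<1/n\}$ because $H_\epsilon|_{\Omega_i}$ attains its minimum at $m_i$, $\phi_n\ge0$, and the $+D\epsilon$ absorbs the small dip of $H_\epsilon$ below $H_\epsilon(m_i)$ on the thin shell. The identity behind the (unlabeled) lemma preceding the proposition is that, for $\nu\propto\exp(-\widehat H/\epsilon)$,
\[
\lc_\nu\!\big(\exp(\widehat H/2\epsilon)\big)=\Big(\tfrac1{2\epsilon}\Delta\widehat H-\tfrac1{4\epsilon^2}\|\nabla\widehat H\|^2\Big)\exp(\widehat H/2\epsilon),
\]
which I apply with $\widehat H=H_\epsilon+\beta$, $\beta=\phi_n(d_i^2)$; the additive constant in $V_i$ drops out. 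Write the bracket as $M+E$, where $M=\tfrac1{2\epsilon}\Delta H_\epsilon-\tfrac1{4\epsilon^2}\|\nabla H_\epsilon\|^2$ is the Morse expression and $E=\tfrac1{2\epsilon}\Delta\beta-\tfrac1{4\epsilon^2}(2\langle\nabla H_\epsilon,\nabla\beta\rangle+\|\nabla\beta\|^2)$ the barrier correction, and argue by regions. On $\{d_i^2\le1/(2n)\}\setminus B(m_i,a\sqrt\epsilon)$: $E=0$ and $M\le-\lambda_0/\epsilon$ by \eqref{eqn:MSmorse}, since this set avoids every $B(m_l,a\sqrt\epsilon)$ by (i). On $B(m_i,a\sqrt\epsilon)$: $E=0$ and $M\le\tfrac1{2\epsilon}\Delta H_\epsilon\le D_\epsilon/(2\epsilon)$ by condition~3; since the shift by $H_\epsilon(m_i)$ keeps $\sup_{B(m_i,a\sqrt\epsilon)}V_i=O(1)$, this produces the term $h\,1_{B(m_i,a\sqrt\epsilon)}$ with $h=h_0/\epsilon$, $h_0=O(1)$. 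On the shell $\{1/(2n)<d_i^2<1/n\}$ (also avoiding all mode balls, so $M\le-\lambda_0/\epsilon$), I bound $E$ using $\|\nabla\beta\|=2nd_i\phi_n'=:P$, $\langle\nabla H_\epsilon,\nabla\beta\rangle=P\langle\nabla H_\epsilon,\nabla d_i\rangle$ with $|\langle\nabla H_\epsilon,\nabla d_i\rangle|=O(\sqrt\epsilon)$ by (ii), and $\Delta\beta=4n^2d_i^2\phi_n''+2n\phi_n'+2nd_i\phi_n'\Delta d_i$ with $\Delta d_i\le D_\epsilon$: the choice $\phi_n''\le(\phi_n')^2$ makes $4n^2d_i^2\phi_n''\le\|\nabla\beta\|^2$, so that term and the cross term are swallowed by $-\tfrac1{4\epsilon^2}\|\nabla\beta\|^2$ (using the $O(\sqrt\epsilon)$ gain for the latter) once $\epsilon$ is small, while the remaining $O(n/\epsilon)$ pieces are dominated by the surplus $M\le-\tfrac1{4\epsilon^2}\|\nabla H_\epsilon\|^2+\tfrac{D_\epsilon}{2\epsilon}=-\Theta(1/\epsilon^2)$ available on the critical-point-free shell (where $n=\Theta(1/\epsilon)$). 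Hence $\lc_{\nu_i}V_i\le-(\lambda_0/\epsilon)V_i+h\,1_{B(m_i,a\sqrt\epsilon)}V_i$ throughout. Finally, $\nu_i\propto\exp(-H_\epsilon/\epsilon)$ on $B(m_i,a\sqrt\epsilon)$ and $H_\epsilon$ varies by $O(\epsilon)$ over a radius-$a\sqrt\epsilon$ ball around its minimum, so $\sup_{B(m_i,a\sqrt\epsilon)}\nu_i/\inf_{B(m_i,a\sqrt\epsilon)}\nu_i\le e^{O(1)}=:C$; thus $V_i$ is a $(\lambda_0/\epsilon,h_0/\epsilon,B(m_i,a\sqrt\epsilon),C)$-Lyapunov function for $\nu_i$ in the sense of Definition~\ref{defn:lyap}, which together with the perturbation claim gives the proposition (taking the weights $p_i$ above).

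\emph{Main obstacle.} The shell estimate is the only hard step. If one bounded the cross term only by $|\langle\nabla H_\epsilon,\nabla\beta\rangle|\le P\|\nabla H_\epsilon\|$, it would contribute an $O(1/\epsilon^2)$ term of the same order as $-\tfrac1{4\epsilon^2}\|\nabla\beta\|^2$ and could not be absorbed; the whole purpose of conditions~2--3 is to force $\langle\nabla H_\epsilon,\nabla d_i\rangle\to0$ at the separatrix, which in turn forces the shell width (equivalently $1/n$) to shrink with $\epsilon$, making $n=\Theta(1/\epsilon)$ the natural scaling. One must then also check that the barrier profile $\phi_n$ can be chosen so that neither the onset of the ramp (where $\phi_n'$ is small but multiplied by the large factor $n$) nor the blow-up (where $\phi_n',\phi_n''$ explode, handled by $\phi_n''\le(\phi_n')^2$) spoils $M+E\le-\lambda_0/\epsilon$, the former being absorbed by the $\Theta(1/\epsilon^2)$ slack in the Morse bound away from critical points. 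Everything else — the perturbation bound, the flat-zone and $B(m_i,a\sqrt\epsilon)$ computations, and the constant $C$ — is routine.
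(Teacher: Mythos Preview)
Your construction—a smooth barrier in $d_i$ added to $H_\epsilon$, then the identity $\lc_\nu e^{\widehat H/2\epsilon}=(\tfrac1{2\epsilon}\Delta\widehat H-\tfrac1{4\epsilon^2}\|\nabla\widehat H\|^2)e^{\widehat H/2\epsilon}$—is the paper's construction, and your perturbation argument is the same. The real difference is how the cross term $\langle\nabla H_\epsilon,\nabla d_i\rangle$ on the shell is handled. The paper keeps $n$ fixed (independent of $\epsilon$) and asserts a \emph{definite sign}: it claims $-\nabla H_\epsilon$ points into $\Omega_i$ on $\partial\Omega_i$, so the cross term has the favorable sign and is simply dropped. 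You instead note (correctly) that $\partial\Omega_i$ is flow-invariant, so $\langle\nabla H_\epsilon,v_\bot\rangle=0$ on the boundary; then you shrink the shell ($n=\Theta(1/\epsilon)$) to make the inner product $O(\sqrt\epsilon)$ and absorb it by Young's inequality. Your geometric reading is the more careful one—in the double well of Lemma~\ref{lem:doublewell} the cross term in fact carries the sign \emph{opposite} to what the paper uses—and your $\epsilon$-dependent shell is the price of avoiding that issue.

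That said, your shell estimate has a gap near the saddle points sitting on $\partial\Omega_i$. You write that the ramp-onset and $O(n/\epsilon)$ Laplacian remainders are ``absorbed by the $\Theta(1/\epsilon^2)$ slack in the Morse bound away from critical points,'' but the shell is a tubular neighborhood of $\partial\Omega_i$ and therefore \emph{contains} $O(\sqrt\epsilon)$-neighborhoods of every saddle on the separatrix; there $\|\nabla H_\epsilon\|=O(\sqrt\epsilon)$ and \eqref{eqn:MSmorse} gives only $M\le-\lambda_0/\epsilon$, not $-\Theta(1/\epsilon^2)$. In that regime the Young residual from the cross term is $O(1/\epsilon)$ with a constant depending on the Lipschitz data of $\nabla H_\epsilon$ and $\nabla d_i$, not on $\lambda_0$, so absorption is not automatic. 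A second, independent issue is your choice to work with $d_i^2$ rather than $d_i$: this makes $\Delta\beta$ pick up an extra $2\phi_n'$ term (from $\Delta(d_i^2)=2+2d_i\Delta d_i$), and $\tfrac1{2\epsilon}\cdot 2\phi_n'$ is again $O(1/\epsilon)$ with an uncontrolled constant where $\phi_n'$ is of order one. Both points are fixable—use a barrier in $d_i$ rather than $d_i^2$ (on $\{d_i>0\}$ the hypothesis $d_i^2\in\bbC^2$ already gives $d_i\in\bbC^2$), and take $n=K/\epsilon$ with $K$ large enough to push the cross-term residual below $\lambda_0/(2\epsilon)$—but they need to be made explicit.
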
 
\begin{proof}
Consider a clamp function $\psi:\R\to\R$ satisfying 
\begin{enumerate}
\item $\psi$ is $\bbC^2$;
\item $\dot{\psi}< 0, \ddot{\psi}/(\dot{\psi})^2\leq C$;
\item $\psi(x)=1$ for all $x\leq 0$;
\item $\psi(x)=0$ for all $x\geq 1$. 
\end{enumerate}
Let
\[
\Psi_{i}(x)=\exp\left(\frac1\epsilon\log\psi(\sqrt{n} d_i(x))\right).
\]
Then, we can construct
\[
\pi_{\epsilon}\propto \sum_{i=1}^I \Psi_{i}(x)\exp(-\tfrac1\epsilon H_\epsilon(x))=\sum_{i=1}^I \exp\left(-\frac{1}{\epsilon}Q_{\epsilon,i}(x)\right),
\]
where
$Q_{\epsilon,i}(x)=-\log\psi(\sqrt{n}d_i(x))+H_\epsilon(x)$.

We next verify that
\begin{equation} \label{eq:ly_Q}
\frac1{2\epsilon}\Delta Q_{\epsilon, i}-\frac{1}{4\epsilon^2} \|\nabla Q_{\epsilon,i}\|^2\leq -\frac{\lambda_0}{\epsilon}.
\end{equation}
Note that \eqref{eq:ly_Q} holds for any $x\in \Omega_i$ since $Q_{\epsilon,i}(x)=H_\epsilon(x)$. 
When $x\in \Omega_i'\setminus\Omega_i$,  
\[
\nabla Q_{\epsilon,i}(x)=-\sqrt{n}\frac{\dot{\psi}(\sqrt{n}d_i(x))}{\psi(\sqrt{n}d_i(x))}\nabla d_i(x)+\nabla H_\epsilon(x)
\]
We first note that because i) $\nabla d_i(x_n)\to v_\bot (x)$ for any $x_n\to x\in \partial \Omega_i$, ii) $-\nabla H_\epsilon(x)$ points toward the inside of $\Omega_i$ for $x\in \partial \Omega_i$,
and iii) $\nabla^2 d_i$ and $\nabla^2H_\epsilon$ are bounded, for $n$ large enough,
$-\langle \nabla d_i(x), \nabla H_\epsilon(x)\rangle<0$ for $x\in \Omega_i'\setminus\Omega_i$. Then,
\[
\frac{1}{4\epsilon^2} \|\nabla Q_{\epsilon,n}(x)\|^2
\geq\frac1{4\epsilon^2}n\|\nabla d_i(x)\|^2\frac{\dot{\psi}(\sqrt{n}d_i(x))^2}{\psi(\sqrt{n}d_i(x))^2}+\frac{1}{4\epsilon^2}\|\nabla H_\epsilon(x)\|^2. 
\]
We next note that
\begin{align*}
\Delta Q_{\epsilon,n}(x)=&-n\frac{\ddot{\psi}(\sqrt{n}d_i(x))}{\psi(\sqrt{n}d_i(x))}\|\nabla d_i(x)\|^2+n\frac{\dot{\psi}(\sqrt{n}d_i(x))^2}{\psi(\sqrt{n}d_i(x))^2}\|\nabla d_i(x)\|^2\\
&-\sqrt{n}\frac{\dot{\psi}(\sqrt{n}d_i(x))}{\psi(\sqrt{n}d_i(x))}\Delta d_i(x)
+\Delta H_\epsilon(x)
\end{align*}
Thus, for $\epsilon$ small enough, 
\begin{align*}
&\frac1{2\epsilon}\Delta Q_\epsilon(x)-\frac{1}{4\epsilon^2} \|\nabla Q_{\epsilon,n}(x)\|^2\\
\leq& -\frac{n}{2\epsilon}\frac{\ddot{\psi}(\sqrt{n}d_i(x))}{\psi(\sqrt{n}d_i(x))}\|\nabla d_i(x)\|^2
+\frac{n}{2\epsilon}\frac{\dot{\psi}(\sqrt{n}d_i(x))^2}{\psi(\sqrt{n}d_i(x))^2}\|\nabla d_i(x)\|^2\\
&-\frac{n}{2\epsilon}\frac{\dot{\psi}(\sqrt{n}d_i(x))}{\psi(\sqrt{n}d_i(x))}\Delta d_i(x)+\frac{1}{2\epsilon}\Delta H_\epsilon(x)\\
&-\frac1{4\epsilon^2}n\frac{\dot{\psi}(\sqrt{n}d_i(x))^2}{\psi(\sqrt{n}d_i(x))^2}\|\nabla d_i(x)\|^2
-\frac{1}{4\epsilon^2}\|\nabla H_\epsilon(x)\|^2\\
\leq& \frac{1}{2\epsilon}\Delta H_\epsilon(x)-\frac{1}{4\epsilon^2}\|\nabla H_\epsilon(x)\|^2 \leq -\frac{\lambda_0}{\epsilon}
\end{align*}

Lastly, we note that
\[
 \exp\left(-\frac1\epsilon H_\epsilon(x)\right) \leq \sum_{i=1}^I \exp\left(-\frac{1}{\epsilon}Q_{\epsilon,i}(x)\right) \leq I\exp\left(-\frac1\epsilon H_\epsilon(x)\right).
\]
Moreover $q(x)\propto \exp\left(-\frac1\epsilon H_\epsilon(x)\right)$ is a $\epsilon$ perturbation of $\pi$. Thus, $\pi_{\epsilon}$ is a $\epsilon$ perturbation of $\pi$. 
\end{proof}

\subsection{Example: Bimodal densities from Bayesian statistics} 
\label{sec:expMorse}
In this section we provide a simple concrete example to demonstrate how mixtures of singular densities arise in practice, and how to implement the Morse function framework discussed above. 

Suppose we want to obtain the posterior density $p(x|y_1,\ldots,y_n)$ where the prior is $\mathcal{N}(0,2)$ and the observation model is given by 
\[
y_i=x^2+\xi_i,\quad \xi_i\sim \mathcal{N}(0,1).
\]
Then, the posterior density is given by 
\begin{align*}
p(x|y_1, \dots, y_n)\propto\exp\left(-\frac12\left(2x^2+\sum_{i=1}^n(x^2-y_i)^2\right)\right)
\propto \exp\left(-\frac n2(x^2-m_n)^2  \right).
\end{align*}
where $m_n=\tfrac1n\sum_{i=1}^n y_i-\tfrac{1}{n}$. It is easy to see that when $m_n>0$, $p(x|y_1,\dots, y_n)$ has two modes $\pm \sqrt{m_n}$. For $m_n=1$, this density is also known as the double-well potential. 
We next show that we can decompose the double-well potential into a mixture.

\begin{lem}
\label{lem:doublewell}
For $\pi(x)\propto \exp(-\tfrac12n(x^2-a^2)^2)$ with $a>0$, 
\[
\pi(x)\propto \nu_+(x)+\nu_-(x)
\]
where $\nu_+(x)=\exp(-\tfrac12n(x^2-a^2)^2)1\{x\geq 0\}$ 
and $\nu_-(x)=\exp(-\tfrac12n(x^2-a^2)^2)1\{x<0\}$.
Moreover, for $\epsilon$ sufficiently small, there is a density $\pi_{\epsilon}$, 
which is an $\epsilon$ perturbation of $\pi$ and
\[
\pi_{\epsilon}(x) \propto \nu_1(x) + \nu_2(x)
\]
where $\nu_1$ has a $(na^2, nh, B(a, \sqrt{n} r), C )$-Lyapunov function 
and $\nu_1$ has a $(na^2, nh, B(-a, \sqrt{n} r), C )$-Lyapunov function 
for certain fixed constants $h,C$.  
\end{lem}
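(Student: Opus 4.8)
The first assertion is just a partition of unity: since $1\{x\ge 0\}+1\{x<0\}=1$ for every $x$, we have $\exp(-\tfrac12 n(x^2-a^2)^2)=\nu_+(x)+\nu_-(x)$ pointwise, hence $\pi\propto\nu_++\nu_-$. The content of the lemma is the second assertion, and it is worth noting up front that $\nu_\pm$ are \emph{not} the components $\nu_1,\nu_2$ we want: they have a corner at $x=0$, so they must be smoothed, and that is precisely the role of the $\epsilon$-perturbation. The plan is to apply Proposition~\ref{prop:morse} with the identifications $\epsilon=1/n$, $H(x)=\tfrac12(x^2-a^2)^2$ (so that $\pi\propto\exp(-H/\epsilon)$), $I=2$, $m_1=a$, $m_2=-a$, and $H_\epsilon=H$ (no perturbation of $H$ itself is needed).

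First I would record the elementary structural facts. We have $H\ge 0$, $H'(x)=2x(x^2-a^2)$, and $H''(x)=2(3x^2-a^2)$, so the only critical points are $0$ and $\pm a$, with $H''(\pm a)=4a^2>0$ and $H''(0)=-2a^2\neq0$; thus $H$ is a nonnegative Morse function. The attraction basins of the gradient flow $\dot x=-H'(x)$ are then $\Omega_1=(0,\infty)$ and $\Omega_2=(-\infty,0)$, and for $n$ large the balls $B(\pm a,\tilde a/\sqrt n)$ lie inside the corresponding $\Omega_i$. Hypotheses~2 and~3 of Proposition~\ref{prop:morse} are immediate in one dimension: $\partial\Omega_i=\{0\}$; on the shell $\Omega_i'=\{0<d_i<1/\sqrt n\}$ one has $d_1(x)=-x$ and $d_2(x)=x$, so $d_i^2$ is smooth there with bounded derivatives and $\nabla d_i$ converges at $0$ to the outward normal $\mp1$; and $\Delta d_i\equiv0$ while $H',H''$ are bounded by an $O(a^2)$ constant $D$ on the $O(1/\sqrt n)$-wide shell $\Omega_i'$.

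The one computation with real content is hypothesis~1, the Morse inequality \eqref{eqn:MSmorse}, which here reads $\tfrac n2 H''(x)-\tfrac{n^2}{4}H'(x)^2\le-\lambda_0 n$ for all $x\notin B(a,\tilde a/\sqrt n)\cup B(-a,\tilde a/\sqrt n)$. I would verify this directly: outside those two balls, either $x$ lies in a fixed neighborhood of the saddle $0$ where $H''<0$, so that $\tfrac n2 H''(x)\le -na^2$ already gives the bound, or $|H'(x)|$ is bounded below by a positive quantity of order $\max(\tilde a/\sqrt n,\,\mathrm{dist}(x,\{\pm a\}))$ — in particular $|H'(x)|\gtrsim \tilde a a^2/\sqrt n$ near the wells and $|H'(x)|\sim 2|x|^3$ for $|x|$ large — in which case the $-\tfrac{n^2}{4}H'(x)^2$ term overwhelms the bounded $\tfrac n2 H''$ term for $n$ large. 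Optimizing the threshold over these regions gives \eqref{eqn:MSmorse} with $\lambda_0=a^2$, for a fixed constant $\tilde a$ depending on $a$ and all $n$ sufficiently large. This three-regime estimate, and in particular pinning the constant down to $\lambda_0=a^2$ so that the leading factor $na^2$ comes out as stated, is the only mildly delicate step.

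With the hypotheses in place, Proposition~\ref{prop:morse} produces an $\epsilon$-perturbation $\pi_\epsilon\propto p_1\nu_1+p_2\nu_2$ of $\pi$ in which each $\nu_i$ carries a $(\lambda_0/\epsilon,\,h_0/\epsilon,\,B(m_i,\tilde a\sqrt\epsilon),\,C)$-Lyapunov function. Substituting $\epsilon=1/n$ and $\lambda_0=a^2$, and using the symmetry $x\mapsto-x$ to conclude $p_1=p_2$ (so the weights are absorbed into the normalization, giving $\pi_\epsilon\propto\nu_1+\nu_2$), yields Lyapunov functions with parameters $\lambda=na^2$, second parameter $n h$ with $h:=h_0$ a fixed constant, centers $\pm a$, ball radius of order $\sqrt\epsilon=1/\sqrt n$, and a fixed $C$ — which is exactly the claim after renaming the fixed constants $h_0$ and $\tilde a$.
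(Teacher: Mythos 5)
Your overall route is the same as the paper's: set $\epsilon=1/n$ and $H=\tfrac12(x^2-a^2)^2$, take $H_\epsilon=H$, check the boundary conditions with $\Omega_1=[0,\infty)$, $\Omega_2=(-\infty,0]$, $d_1(x)=-x$, $d_2(x)=x$, and invoke Proposition~\ref{prop:morse}. The first assertion and the regularity checks are fine and match the paper.

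The gap is in the one step you yourself flag as having ``real content,'' the verification of \eqref{eqn:MSmorse}, and as sketched your case analysis does not cover all points. Your first regime claims that in a fixed neighborhood of the saddle $0$ one already has $\tfrac n2 H''(x)\le -na^2$; since $\tfrac n2H''(x)=n(3x^2-a^2)$, this holds only at $x=0$ exactly, not on any neighborhood. Your second regime claims $|H'(x)|$ is bounded below by a quantity of order $\max(\tilde a/\sqrt n,\,\mathrm{dist}(x,\{\pm a\}))$, but $H'(x)=2x(x^2-a^2)$ vanishes at $x=0$, so no such lower bound holds near the saddle; the points with $0<|x|$ small fall through both regimes. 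The estimate can be repaired (there the term $-\tfrac{n^2}{4}H'(x)^2\approx -n^2a^4x^2$ does dominate $3nx^2$ for $n$ large), but the paper avoids the case analysis entirely with one uniform identity:
\[
\frac{1}{2\epsilon}H''(x)-\frac{1}{4\epsilon^2}H'(x)^2=\frac{3x^2}{\epsilon}-\frac{a^2}{\epsilon}-\frac{1}{\epsilon^2}x^2(x-a)^2(x+a)^2,
\]
and for $x>0$ with $(x-a)^2\ge 3\epsilon/a^2$ one has $(x+a)^2\ge a^2$, so the last term is $\le -3x^2/\epsilon$ and exactly absorbs the positive $3x^2/\epsilon$, leaving $\le -a^2/\epsilon$ for all such $x$ at once (symmetrically for $x<0$). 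Replacing your three-regime sketch with this single estimate closes the gap; the rest of your proposal then goes through as in the paper.
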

\begin{proof}
Let $H(x)=\tfrac12 (x^2-a^2)^2$ and $\epsilon=1/n$.
We first note that
\[
\nabla H(x)=2x(x^2-a^2)
\mbox{ and }
\nabla^2 H(x)=6x^2-2a^2.
\]
Thus,
\[
\frac{1}{2\epsilon} \nabla^2 H(x)-\frac{1}{4\epsilon^2} \|\nabla H(x)\|^2
=\frac{3x^2}{\epsilon}-\frac{a^2}{\epsilon}-\frac1{\epsilon^2} x^2 (x^2-a^2)^2.
\]

%
When $|x-a|^2\geq 3\epsilon/a^2$ and $x>0$,
\[
\frac1{\epsilon^2} x^2 (x^2-a^2)^2=\frac1{\epsilon^2} x^2 (x-a)^2(x+a)^2
\geq \frac1{\epsilon^2} x^2 \frac{3\epsilon}{a^2} a^2\geq \frac{3x^2}{\epsilon}.
\]
Then,
\[
\frac{1}{2\epsilon} \nabla^2 H(x)-\frac{1}{4\epsilon^2} \|\nabla H(x)\|^2\leq -\frac{a^2}{\epsilon}.
\]
Similarly, when $|x+a|^2\geq 3\epsilon/a^2$ and $x<0$, we also have
\[
\frac{1}{2\epsilon} \nabla^2 H(x)-\frac{1}{4\epsilon^2} \|\nabla H(x)\|^2\leq -\frac{a^2}{\epsilon}.
\]
In this case, $H_\epsilon=H$ already satisfies \eqref{eqn:MSmorse}. (There is no saddle point for this problem.)

Next if we split $R$ into $\Omega_1=[0,\infty)$ and $\Omega_2=(-\infty,0]$. It is easy to see that $d_1(x)=-x$ is $\bbC^2$ in $(-\infty, 0)$. In addition, $\nabla d_1(x)=-1$, which is the same as the outward direction for $\Omega_1$ at $x=0$. 
Similarly, $d_2(x)=x$ is $\bbC^2$ in $(0,\infty)$ and $\nabla d_2(x)=1$ is the same as the outward direction for $\Omega_2$ at $x=0$.
Thus, the existence of the $\pi_{\epsilon}$ follows from Proposition \ref{prop:morse}. 

\end{proof}

\section{Analysis for ReLD} \label{sec:ReLD_proof}
In this section, we provide detailed analysis on how the replica-exchange mechanism speeds up the convergence
of ReLD over LD. We also present the proof of Theorem \ref{th:main1}.

\subsection{A roadmap}
\label{sec:roadmap}
Let $\bar \theta=\E_{\pi\pi^{y}}[f(X,Y)]$,
\[\eta_i(y)=\int f(x,y)\nu_i(x)dx \mbox{ and } \theta_i=\int \eta_i(y)\pi^{y}(y)dy\]
for $i=1,2, \dots, I$. 
First, based on the form of $\pi$, the variance of $f(X,Y)$ can be decomposed as 
\[\begin{split}
\var_{\pi\pi^{y}}(f(X,Y))&=\int (f(x,y)-\bar\theta)^2\pi(x)\pi^{y}(y)dxdy\\
&=\sum_{i=1}^Ip_i\int (f(x,y)-\bar\theta)^2\nu_i(x)\pi^{y}(y)dxdy.
\end{split}\]
Then, because
\[
f(x,y)-\bar{\theta}=(f(x,y)-\eta_i(y))+(\eta_i(y)-\theta_i)+(\theta_i-\bar{\theta}),
\]
by Cauchy-Schwarz inequality, we can further decompose the variance as
\begin{equation}\label{eq:main_decomp}
\begin{split} 
\var_{\pi\pi^{y}}(f(X,Y))\leq&3 \sum_{i=1}^Ip_i\underbrace{\int (f(x,y)-\eta_i(y))^2\nu_i(x)\pi^{y}(y)dxdy}_{\mbox{(A)}}\\
&+3\sum_{i=1}^Ip_i\underbrace{\int (\eta_i(y)-\theta_i)^2\pi^{y}(y)dy}_{\mbox{(B)}} + 3\underbrace{\sum_{i=1}^Ip_i(\theta_i-\bar \theta)^2}_{\mbox{(C)}}.
\end{split}\end{equation}
Note that part (A) is the variance of $f$ under 
$\nu_i$ with $y$ being fixed. Part (B) is the variance of $\eta_i$ under $\pi^y$. Since $\nu_i$ and $\pi^y$ satisfy the Lyapunov condition, parts (A) and (B) can be controlled using Proposition \ref{prop:bakry_lyap}. 

For part (C), as $\bar{\theta}=\sum_{i=1}^I p_i\theta_i$,  
\begin{align*}
\sum_{i=1}^{I}p_i(\theta_i-\bar\theta)^2=\sum_{i=1}^I p_i\left(\sum_{j=1}^{I}p_j(\theta_i-\theta_i)\right)^2
\leq \sum_{i,j} p_ip_j(\theta_i-\theta_j)^2 
\end{align*}
by Cauchy-Schwarz inequality.
Therefore, we need an upper bound for 
\[
(\theta_i-\theta_j)^2=\left(\E_{\nu_i\pi^y}[f(X,Y)] - \E_{\nu_j\pi^y}[f(X,Y)]\right)^2.
\]
When running the naive LD, \cite{MS14} provides an estimate of the difference between
$\E_{\nu_i}[f(X)]$ and $\E_{\nu_j}[f(X)]$ (see Theorem 2.12 in \cite{MS14}).
The estimate depends on the saddle height, and when $\nu_i\propto \phi((x-m_i)/\epsilon)$, it grows exponentially in $1/\epsilon$. One of the main technical contribution of this paper is to find an upper bound for the mean difference in the ReLD setting. In particular, we establish that the ratio between the mean difference square and the carre du champ of ReLD stays invariant when $\epsilon$ goes to zero. 

To achieve a better mean difference and subsequently the PI constant, we need to exploit the additional exchange term that arises in the carr\'e du champ for ReLD. In particular, we focus on the term
\[\begin{split}
&\E_{\pi\pi^y}\left[\rho s(X,Y)(f(Y,X)-f(X,Y))^2\right]\\
=&\rho\int (f(y,x)-f(x,y))^2 (\pi(x)\pi^y(y))\wedge(\pi(y)\pi^y(x))dxdy\\
\end{split}\] 
We first note that 
\[\begin{split}
&\sum_{i,j}p_ip_j\rho\int (f(y,x)-f(x,y))^2  (\nu_i(x)\pi^y(y))\wedge(\nu_j(y)\pi^y(x))dxdy\\
\leq& \rho\int (f(y,x)-f(x,y))^2 (\pi(x)\pi^y(y))\wedge(\pi(y)\pi^y(x))dxdy
\end{split}\]
In the following, we refer $(\nu_i(x)\pi^y(y))\wedge(\nu_j(y)\pi^y(x))$ as a ``maximal coupling density" as its formulation is similar to the $L_1$-maximal coupling between $(\nu_i(x)\pi^y(y))$ and $(\nu_j(y)\pi^y(x))$ \cite{lindvall02}.
However, it is our experience that this ``maximal coupling density" is still difficult to deal with. To resolve the challenge, we replace $\nu_i$ by $u_{B(m_i,r_i)}$, which is the uniform distribution on $B(m_i,r_i)$, and $\pi^y$ by $u_{B(m_j,R_j)}$ using appropriate bounding arguments. The ``maximal coupling density" with uniform distributions is much easier to handle, and we can build an upper bound for the transformed mean difference
\[
\left(\int f(x,y) u_{B(m_i,r_i)}(x)u_{B(m_j,R_j)}(y)dxdy-
\int f(x,y)u_{B(m_i,R_i)}(y)u_{B(m_j,r_j)}(x) dxdy\right)^2.
\]
Since uniform distributions will play a pivotal role in our analysis, in what follows, we first develop some auxiliary results regarding uniform distribution. 

\subsection{Auxiliary Lemmas}
For a given bounded  domain, $D\subset \mathbb{R}^d$, 
we denote $u_D$ as the uniform distribution on $D$. 
We also write $V_D$ as the volume of $D$. 
Then $u_D(x)=1/V_D$ for any $x\in D$.  
A special bounded convex domain is a ball. We denote $B(x_0,R)=\{x: \|x-x_0\|^2\leq R^2\}$ as a $d$-dimensional ball centered at $x_0$ and having radius $R$. When $x_0$ and $R$ is clear from the context, we may also write the ball as $B$ for conciseness. 

\begin{lemma} \label{lm:univar}
Consider a univariate density $p$ on $(-R,R)$, here $R$ can be $\infty$ if the support of $p$ is $\R$. 
Suppose there is a function $Q$ that is decreasing and differentiable on $[-R,0)$, and is increasing and differentiable on $(0,R]$.
In addition, $Q(0)=0$, and
\[\frac{\int_{x}^{R}Q(t)p(t)dt}{q(x)}\leq \kappa p(x) \mbox{ for $x>0$, and }
\frac{\int_{-R}^{x}Q(t)p(t)dt}{|q(x)|}\leq \kappa p(x) \mbox{ for $x<0$,}\]
where $q(x)=\frac{dQ(x)}{dx}$.
Then,
\[
\var_p(f)\leq \E_p[|f(X)-f(0)|^2]\leq \kappa\E_p[|\nabla f(X)|^2].
\]
Consequentially, $p$ follows a $\kappa$-PI. 
\end{lemma}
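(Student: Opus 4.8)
### Proof proposal for Lemma \ref{lm:univar}

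\textbf{Setup.} The plan is to prove the sharper inequality $\E_p[|f(X)-f(0)|^2]\leq \kappa\,\E_p[|\nabla f(X)|^2]$, from which the Poincaré inequality $\var_p(f)\leq \E_p[|f(X)-f(0)|^2]$ follows immediately since the variance is the infimum over constants $c$ of $\E_p[(f-c)^2]$. So the real work is the middle inequality. I would first reduce to the two half-lines: write
\[
\E_p[|f(X)-f(0)|^2]=\int_0^R (f(x)-f(0))^2 p(x)\,dx+\int_{-R}^0 (f(x)-f(0))^2 p(x)\,dx,
\]
and bound each piece separately, since the hypotheses on $Q$ are stated symmetrically on $[0,R]$ and $[-R,0]$. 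By symmetry I only need to treat $x>0$.

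\textbf{Main step: the Hardy-type argument on $(0,R)$.} On $(0,R)$ write $f(x)-f(0)=\int_0^x f'(t)\,dt$. The idea is to insert the weight $q(t)=Q'(t)>0$ (positive on $(0,R)$ since $Q$ is increasing there with $Q(0)=0$): by Cauchy–Schwarz,
\[
(f(x)-f(0))^2=\left(\int_0^x f'(t)\,dt\right)^2\leq \left(\int_0^x \frac{dt}{q(t)}\right)\left(\int_0^x f'(t)^2 q(t)\,dt\right).
\]
Hmm — but this needs $\int_0^x dt/q(t)$ controlled, which isn't directly among the hypotheses, so instead I would run the argument in the ``dual'' direction. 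The cleaner route is: multiply $(f(x)-f(0))^2 p(x)$, integrate over $x\in(0,R)$, and integrate by parts / use Fubini to move the $p$-weight onto a tail integral. Concretely, with $g(x):=(f(x)-f(0))^2$, one has $g'(x)=2(f(x)-f(0))f'(x)$, and after a Fubini swap
\[
\int_0^R g(x)p(x)\,dx = \int_0^R g'(x)\left(\int_x^R p(t)\,dt\right)dx.
\]
The hypothesis should then be applied in the form that ties $\int_x^R Q(t)p(t)\,dt$ to $q(x)p(x)$; so I would instead weight by $Q$: using $Q(0)=0$ and $Q$ increasing, replace the bare tail $\int_x^R p$ by comparison with $\int_x^R Q(t)p(t)\,dt / Q(x)$ — actually the honest approach is to follow exactly the classical Muckenhoupt/Hardy argument adapted to this $Q$-weighted setting: bound $g'(x)=2(f(x)-f(0))f'(x)$ via Cauchy–Schwarz as $2\sqrt{g(x)}\,|f'(x)|$, leading to a quadratic inequality in $\sqrt{\int g\,p}$, and close it using precisely the stated bound $\int_x^R Q p \leq \kappa\, q(x) p(x)$ together with $Q$ dominating the relevant increments. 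This yields $\int_0^R g(x)p(x)\,dx\leq \kappa \int_0^R f'(x)^2 p(x)\,dx$.

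\textbf{Assembling and the obstacle.} Adding the $x>0$ and $x<0$ bounds gives $\E_p[|f(X)-f(0)|^2]\leq \kappa\,\E_p[|\nabla f(X)|^2]$, and then $\var_p(f)\leq \E_p[(f(X)-f(0))^2]$ finishes it; the last sentence (``$p$ follows a $\kappa$-PI'') is then just the definition. I expect the main obstacle to be getting the integration-by-parts / Fubini bookkeeping exactly right so that the quantity $\int_x^R Q(t)p(t)\,dt$ appears in the precise shape the hypothesis controls — in particular handling the boundary terms at $x=R$ (finite or infinite) and at $x=0$, and making sure the monotonicity of $Q$ on each half-line is used in the correct direction to pass from $\int f' $ increments to something weighted by $q$. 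A secondary technical point is justifying the Cauchy–Schwarz step and the finiteness of all integrals for general $f\in \bbC_c^2$ (or the relevant test class), which is routine given compact support but should be noted.
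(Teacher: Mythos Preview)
Your overall architecture (split into $x>0$ and $x<0$, handle each half by a weighted Cauchy--Schwarz on $f(x)-f(0)=\int_0^x f'$, then Fubini) is exactly the paper's approach. The gap is that you placed the weight on the wrong factor in the Cauchy--Schwarz step, decided the resulting integral $\int_0^x dt/q(t)$ was uncontrollable, and then wandered into an integration-by-parts/Muckenhoupt argument that you never actually carry out. The paper's proof simply swaps the weights: write
\[
(f(x)-f(0))^2=\left(\int_0^x f'(t)\,dt\right)^2\le \left(\int_0^x f'(t)^2\,\frac{dt}{q(t)}\right)\left(\int_0^x q(t)\,dt\right)
= Q(x)\int_0^x \frac{f'(t)^2}{q(t)}\,dt,
\]
using $Q(0)=0$. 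Now multiply by $p(x)$, integrate over $(0,R)$, and Fubini:
\[
\int_0^R (f(x)-f(0))^2 p(x)\,dx
\le \int_0^R \frac{f'(t)^2}{q(t)}\left(\int_t^R Q(x)p(x)\,dx\right)dt
\le \kappa\int_0^R f'(t)^2 p(t)\,dt,
\]
where the last step is exactly the hypothesis $\int_t^R Q\,p \le \kappa\, q(t)p(t)$. The $x<0$ half is symmetric, and $\var_p(f)\le \E_p[(f-f(0))^2]$ is immediate.

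So there is no need for the ``dual'' route, the quadratic-in-$\sqrt{\int gp}$ trick, or any boundary-term bookkeeping; once the weight is on the correct side, the hypothesis is tailor-made for the Fubini output and the proof is three lines. Your proposal as written does not close the argument because the alternative path is only gestured at and never uses the stated hypothesis in a concrete inequality.
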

\begin{proof}
We first note that
\[(f(x)-f(0))^2=\left(\int_{0}^{x}\nabla f(y)dy\right)^2 \leq \left(\int_{0}^{x}|\nabla f(y)|^2\frac{1}{q(y)}dy\right)\left(\int_{0}^{x}q(y)dy\right).\]
\[\begin{split}
\int_{0}^{R}|f(x)-f(0)|^2p(x)dx 
\leq& \int_{0}^{R}\int_{0}^{x}|\nabla f(y)|^2\frac{1}{q(y)}dyQ(x)p(x)dx\\
=& \int_{0}^{R}|\nabla f(y)|^2\frac{\int_{y}^{R}Q(x)p(x)dx}{q(y)}dy\\
\leq&\kappa\int_{0}^{R}|\nabla f(y)|^2p(y)dy.
\end{split}\]
Similarly, we can show that
\[
\int_{-R}^{0}|f(x)-f(0)|^2p(x)dx \leq \kappa\int_{-R}^{0}|\nabla f(y)|^2p(y)dy.
\]
Thus, 
\[
\int_{-R}^{R}|f(x)-f(0)|^2p(x)dx \leq \kappa\int_{-R}^{R}|\nabla f(x)|^2p(x)dx.
\]
\end{proof}

\begin{lemma} \label{lm:uniform_ball}
Given a ball $B=B(x_0, R) \subset \mathbb{R}^d$, $u_B$ satisfies a $R^2$-PI:
\begin{equation} \label{eq:PI_bound} 
\var_{u_B}(f(X))\leq R^2\E_{u_B}[\|\nabla f(X)\|^2].
\end{equation}
\end{lemma}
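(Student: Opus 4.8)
The plan is to realize the uniform law on a ball as the projection of the uniform law on a sphere one dimension higher, and then import the (sharp, classical) Poincar\'e inequality on the sphere. By translation invariance of both $\var_{u_B}$ and $\E_{u_B}\|\nabla f\|^2$, I may assume $x_0=0$, so $B=B(0,R)\subset\R^d$. Let $S:=\{z\in\R^{d+2}:\|z\|=R\}$ be the sphere of radius $R$ in $\R^{d+2}$, equipped with its uniform probability measure $u_S$. The key identity is the classical ``hat-box'' fact that if $Z\sim u_S$ then $(Z_1,\dots,Z_d)\sim u_{B(0,R)}$: the density of the first $d$ of $d+2$ coordinates of a uniform point on $S$ is proportional to $(R^2-\|x\|^2)^{(d+2-d-2)/2}=1$ on $\{\|x\|<R\}$, which one checks directly from the coarea formula (for $d=1$ this is Archimedes' theorem). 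Hence, given $f$, the lift $g(z):=f(z_1,\dots,z_d)$ on $S$ satisfies $\E_{u_S}g=\E_{u_B}f$ and $\var_{u_S}(g)=\var_{u_B}(f)$.

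Next I would invoke the Poincar\'e inequality on the $m$-dimensional sphere of radius $R$, namely $\var_{u_S}(g)\le \frac{R^2}{m}\,\E_{u_S}[\|\nabla_S g\|^2]$, where $\nabla_S$ is the Riemannian gradient; this is exactly the statement that the first nonzero eigenvalue of the Laplace--Beltrami operator is $m/R^2$, with the coordinate functions as first eigenfunctions. Here $m=d+1\ge 2$, so no low-dimensional edge case arises. It then remains only to compare $\nabla_S g$ with $\nabla f$: viewing $g$ as a function on $\R^{d+2}$ via the same formula, its ambient gradient is $(\nabla f(z_{1:d}),0,0)$, and $\nabla_S g$ is its orthogonal projection onto $T_zS=z^\perp$, so $\|\nabla_S g(z)\|\le\|\nabla f(z_{1:d})\|$. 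Combining the three steps,
\[
\var_{u_B}(f)=\var_{u_S}(g)\le \frac{R^2}{d+1}\,\E_{u_S}[\|\nabla_S g\|^2]\le \frac{R^2}{d+1}\,\E_{u_B}[\|\nabla f\|^2]\le R^2\,\E_{u_B}[\|\nabla f\|^2],
\]
which is even slightly stronger than claimed.

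The only non-elementary inputs are the two classical facts used above, so the main ``obstacle'' is really just to decide how self-contained to be: the hat-box identity needs a one-line coarea computation, and the spherical Poincar\'e inequality needs the spherical-harmonics description of $L^2(S)$ (which for $d=1$ degenerates to Archimedes plus Wirtinger's inequality). I would also remark that one could bypass the sphere entirely by citing the Payne--Weinberger inequality, which for a convex body gives Poincar\'e constant at most $\mathrm{diam}(B)^2/\pi^2=4R^2/\pi^2<R^2$; by contrast, the naive self-contained route --- writing $f(x)-f(y)$ as a line integral along the chord $[x,y]\subset B$ and bounding the resulting Jacobians crudely --- only yields a dimension-dependent constant of order $2^{d}R^2$, which is precisely why the detour through the sphere (or Payne--Weinberger) is worth taking.
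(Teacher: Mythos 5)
Your proof is correct, and it takes a genuinely different route from the paper's. The paper argues by induction on the dimension: the base case $d=1$ is a direct computation with the double-integral representation of the variance and Cauchy--Schwarz, and the inductive step splits $\var_{u_B}(f)$ into the conditional variance over $(d-1)$-dimensional slices (handled by the induction hypothesis, since each slice is a ball of radius $\sqrt{R^2-y^2}\le R$) plus the variance of the slice averages (handled by the one-dimensional Lemma \ref{lm:univar} applied to the marginal density of the last coordinate). That argument is entirely self-contained and elementary, which matters in a paper whose whole point is to track explicit constants, and it delivers exactly the constant $R^2$. Your argument instead imports two classical facts --- the Archimedes/hat-box identity that the first $d$ coordinates of a uniform point on the sphere of radius $R$ in $\R^{d+2}$ are uniform on $B(0,R)$, and the spectral gap $(d+1)/R^2$ of that sphere --- together with the pointwise bound $\|\nabla_S g\|\le\|\nabla f\|$ for the lift; the bookkeeping is right (the exponent $(m-k-2)/2$ vanishes precisely when $m=d+2$, and the sphere has dimension $d+1$). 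What your route buys is brevity and a strictly better, dimension-improving constant $R^2/(d+1)$; what it costs is self-containedness, since the spherical Poincar\'e inequality (or, in your alternative, Payne--Weinberger) must be cited rather than proved. Either constant suffices for every downstream use in the paper, so the improvement is not exploited here, but your version of the lemma is a legitimate strengthening.
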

\begin{proof}
We prove the result by induction on the dimension. It is without loss of generality to assume $x_0=0$.
First, for $d=1$, $B=[-R,R]$. In this case
\[\begin{split}
\var_{u_B}(f(X)) &= \int_{-R}^{R} (f(x) - \E_{u_B}[f(X)])^2\frac{1}{2R} dx\\
&=\frac{1}{2}\int_{-R}^{R} \int_{-R}^{R} (f(x) - f(x^{\prime}))^2\frac{1}{2R} \frac{1}{2R}dxdx^{\prime}\\
&=\frac{1}{8R^2}\int_{-R}^{R} \int_{-R}^{R} (\int_{x}^{x^{\prime}}|\nabla f(s)|ds)^2 dxdx^{\prime}\\
&\leq \frac{1}{8R^2} \int_{-R}^{R} \int_{-R}^{R}|x-x^{\prime}|\int_{-R}^{R}|\nabla f(s)|^2 ds dxdx^{\prime}\\
&\leq  \frac{2R^2}{3}\int_{-R}^{R}|\nabla f(s)|^2\frac{1}{2R} ds \leq R^2  \E_{u_D}[\|\nabla f(X)\|^2]
\end{split}\]
Next, suppose the \eqref{eq:PI_bound} holds for $(d-1)$-dimensional balls, and $B\subset \mathbb{R}^d$.
We use $(x,y)$, $x\in \mathbb{R}^{d-1}$ and $y\in \mathbb{R}$, to denote a $d$-dimensional vector. We also write
\[\tilde B_y=\{x: (x,y)\in B\} \mbox{ and } g(y) =\int_{\tilde B_y}f(x,y)\frac{1}{V_{\tilde B_y}}dx.\]
Note that $\tilde B_y\subset\mathbb{R}^{d-1}$ is a ball with radius $\sqrt{R^2-y^2}$.
We also note that
\begin{equation} \label{eq:decomp1}
\var_{u_B}(f(X,Y))=\E_{u_B}[(f(X,Y)-g(Y))^2]+\E_{u_B}[(g(Y) - \E_{u_B}[f(X,Y)])^2]
\end{equation}
We can then analyze the two parts on the right hand side of \eqref{eq:decomp1} one by one.
For the first part, for any fixed $y$, $f(x,y)$ is a $(d-1)$-dimensional function. By induction, 
\[\begin{split}
\int_{\tilde B_y}(f(x,y) - g(y))^2\frac{1}{V_{\tilde B_y}}dx &\leq  (R^2-y^2)\int_{\tilde B_y}\|\nabla_xf(x,y)\|^2\frac{1}{V_{\tilde B_y}}dx\\
&\leq R^2 \int_{\tilde B_y}\|\nabla_xf(x,y)\|^2\frac{1}{V_{\tilde B_y}}dx.
\end{split}\]
Therefore,
\begin{equation} \label{eq:lm1_bd1}
\E_{u_B}[(f(X,Y)-g(Y))^2]\leq  R^2\int_{B}\|\nabla_xf(x,y)\|^2\frac{1}{V_{B}}dxdy.
\end{equation}
For the second part of \eqref{eq:decomp1}, note that $V_{\tilde B_y}$ as a function of $y$ is increasing for $y\in[-R,0]$ and decreasing for $y\in[0,R]$.
Let $p(y)=V_{\tilde B_y}/V_B$, i.e., it is the marginal density of $y$ under $u_B$. In particular, $\E_p[g(Y)]=\E_{u_B}[f(X,Y)]$.
We also define $Q(y)=|y|$ and $q(y)=\sign(y)$. 
Then for $z>0$,
\[\frac{\int_{z}^{R}Q(y)p(y)dy}{q(z)} \leq p(z)\int_{z}^{R}Q(y)dy\leq \frac{R^2}{2}p(z).\]
Likewise, for $z<0$,
\[\frac{\int_{-R}^{z}Q(y)p(y)dy}{|q(z)|} \leq \frac{R^2}{2}p(z).\]
Thus, we can apply Lemma \ref{lm:univar}. In particular,
\begin{equation}\begin{split} \label{eq:lm1_bd2}
 \E_{u_B}[(g(Y) - \E_{u_B}[f(X,Y)])^2] &=\var_p(g(Y))\\
&\leq \E_{p}[(g(Y)-g(0))^2]\\
&\leq\frac{R^2}{2}\int_{-R}^R\|\nabla_y g(y)\|^2p(y)dy \mbox{ by Lemma \ref{lm:univar}}\\
&=\frac{R^2}{2}\int_{-R}^R\left(\nabla_y \int_{\tilde B_y} f(x,y)\frac{1}{V_{\tilde B_y}}dx\right)^2p(y)dy\\
&\leq \frac{R^2}{2}\int_{B}\|\nabla_y f(x,y)\|^2\frac{1}{V_B}dxdy
\end{split}\end{equation}
Combining the bounds in \eqref{eq:lm1_bd1} and \eqref{eq:lm1_bd2}, we have
\[\begin{split}
\var_{u_B}(f(X,Y)) &\leq R^2\int_{B}\|\nabla_xf(x,y)\|^2\frac{1}{V_{B}}dxdy + \frac{R^2}{2}\int_{B}\|\nabla_y f(x,y)\|^2\frac{1}{V_B}dxdy\\
&\leq R^2 \int_{B}\|\nabla f(x,y)\|^2\frac{1}{V_{B}}dxdy.
\end{split}\]
\end{proof}

%

For a given measure $\mu$, we denote 
\[
\mu_D(x)=\frac{\mu(x)1_{D}(x)}{\int_D\mu(y)dy},
\]
i.e., the measure $\mu$ conditional on being in the bounded domain $D$.
\begin{lemma}\label{lm:gen_uni}
Given a ball $B=B(x_0, R)\subset \mathbb{R}^d$, suppose 
\[\frac{\max_{x\in B}\mu(x)}{\min_{x\in B}\mu(x)} \leq C.\]
Then 
\[\var_{\mu_B}(f(X)) \leq C^2R^2 \E_{\mu_B}[\|\nabla f(X)\|^2].\]
\end{lemma}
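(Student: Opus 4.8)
The plan is to reduce the claim to the Poincar\'e inequality for the uniform distribution on the ball, which is exactly Lemma \ref{lm:uniform_ball}, and then transfer it to $\mu_B$ via a direct comparison of variances and Dirichlet forms, in the same spirit as the proof of the Holley--Stroock perturbation principle (Proposition \ref{prop:equivalent}). The key observation is that on the bounded domain $B$ the densities $\mu_B$ and $u_B$ are comparable: writing $m=\min_{x\in B}\mu(x)$ and $M=\max_{x\in B}\mu(x)$, and using that $u_B(x)=1/V_B$ is constant on $B$, one has
\[
\frac{m}{\int_B \mu}\;\le\;\mu_B(x)\;=\;\frac{\mu(x)\mathbf 1_B(x)}{\int_B\mu}\;\le\;\frac{M}{\int_B\mu},
\qquad x\in B,
\]
and since $m V_B\le \int_B\mu\le M V_B$, this yields $C^{-1} u_B(x)\le \mu_B(x)\le C\, u_B(x)$ for all $x\in B$.

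Given this two-sided bound, I would carry out the comparison in three short steps. First, let $\bar f_{u_B}=\E_{u_B}[f]$ and bound the variance under $\mu_B$ by the mean-centered second moment about $\bar f_{u_B}$:
\[
\var_{\mu_B}(f)\le \int_B (f(x)-\bar f_{u_B})^2\,\mu_B(x)\,dx
\le C\int_B (f(x)-\bar f_{u_B})^2\, u_B(x)\,dx
= C\,\var_{u_B}(f),
\]
using that the variance is the minimal mean-square deviation and then the upper comparison $\mu_B\le C\,u_B$. Second, apply Lemma \ref{lm:uniform_ball} to get $\var_{u_B}(f)\le R^2\,\E_{u_B}[\|\nabla f\|^2]$. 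Third, use the lower comparison $u_B\le C\,\mu_B$ to pass back: $\E_{u_B}[\|\nabla f\|^2]=\int_B \|\nabla f(x)\|^2 u_B(x)\,dx\le C\int_B\|\nabla f(x)\|^2\mu_B(x)\,dx=C\,\E_{\mu_B}[\|\nabla f\|^2]$. Chaining the three inequalities gives $\var_{\mu_B}(f)\le C^2 R^2\,\E_{\mu_B}[\|\nabla f\|^2]$, as claimed.

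There is essentially no serious obstacle here; the only point requiring a little care is the very first inequality, where one replaces the true $\mu_B$-mean of $f$ by $\bar f_{u_B}$ — this is legitimate precisely because $\var_{\mu_B}(f)=\inf_{c}\E_{\mu_B}[(f-c)^2]$, so any constant, in particular $\bar f_{u_B}$, can only increase the integral. Everything else is a direct substitution of the density comparison bounds, and one should remember to restrict all integrals to $B$ (both $\mu_B$ and $u_B$ are supported there) so that the pointwise comparisons apply. The statement of Lemma \ref{lm:uniform_ball} requires $f\in\bbC_c^2$, but on the bounded ball $B$ the usual density/truncation argument lets us apply it to any $\bbC^2$ function, so no additional regularity assumptions are needed.
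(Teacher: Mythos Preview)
Your proposal is correct and follows essentially the same route as the paper: establish the two-sided bound $C^{-1}u_B\le \mu_B\le C\,u_B$, replace the $\mu_B$-mean by $\bar f_{u_B}$ using the variational characterization of variance, apply Lemma~\ref{lm:uniform_ball}, and transfer back via the density comparison. The paper's proof is line-for-line the same three-step chain you describe.
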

\begin{proof}
Recall that $u_B$ is a uniform distribution on $B$. Then
\[V_B \min_{x\in B} \mu(x) \leq 1=\int_B \mu_B (x) \leq V_B\max_{x\in B}\mu(x).\]
This implies that
\begin{equation}\label{eq:lm2_bd}
\frac{1}{C}\leq \frac{\mu_B(x)}{u_B(x)} \leq C.
\end{equation}
Next
\[\begin{split}
\var_{\mu_B}(f(X)) &\leq \int_{B}(f(x)-\E_{u_B}[f(X)])^2\mu_B(x)dx\\
&\leq C\int_{B}(f(x)-\E_{u_B}[f(X)])^2u_B(x)dx \mbox{ by \eqref{eq:lm2_bd}}\\
&\leq CR^2\int_{B} \|\nabla f(x)\|^2 u_B(x)dx \mbox{ by Lemma \ref{lm:uniform_ball}}\\
&\leq C^2R^2 \int_{B} \|\nabla f(x)\|^2 \mu_B(x)dx \mbox{ by \eqref{eq:lm2_bd}}.
\end{split}\]
\end{proof}

\subsection{PI and Lyapunov function }
\label{sec:proof24}

We are now ready to prove Proposition \ref{prop:bakry_lyap}
\begin{proof}[Proof of Proposition \ref{prop:bakry_lyap}]
The arguments we use here are similar to the ones used in \cite{Bakry}. The only difference is that we use Lemma \ref{lm:gen_uni} to find the bounding constants explicitly. Note that for any constant $c$,
\[
\int (f(x)-c)^2 \nu(x)dx \leq \underbrace{\int\frac{-\lc_{\nu}V(x)}{\lambda V(x)}(f(x)-c)^2\nu(x)dx}_{\mbox{(I)}} + \underbrace{\int (f(x)-c)^2\frac{b}{\lambda V(x)}1_B(x)\nu(x)dx}_{\mbox{(II)}}.
\]
\noindent{\bf For part (I),} note that
\[\begin{split}
&\int\frac{-\lc_{\nu}V(x)}{V(x)}(f(x)-c)^2\nu(x)dx\\
=&\int \left\langle\nabla\left(\frac{(f(x)-c)^2}{V(x)}\right),\nabla V(x)\right\rangle \nu(x)dx\mbox{   by equation (1.7.1) in \cite{bakry2013analysis}}\\
=&2\int\frac{f(x)-c}{V(x)}\langle\nabla f(x),\nabla V(x)\rangle \nu(x)dx - \int\frac{(f(x)-c)^2}{V(x)^2}\|\nabla V(x)\|^2\nu(x)dx\\
=&\int\|\nabla f(x)\|^2\nu(x)dx - \int\left\|\nabla f(x) - \frac{f(x)-c}{V(x)}\nabla V(x)\right\|^2\nu(x)dx\\
\leq&\int\|\nabla f(x)\|^2\nu(x)dx.
\end{split}\]
\noindent{\bf For part (II),} recall $\nu_B$ is $\nu$ conditioned on that $x\in B$. Set
\[
c=\int f(x)u_B(x)dx.
\]
i.e., the expected value of $f$ over the uniform distribution on $B$.
Then,
\[\begin{split}
\int_{B}\frac{(f(x)-c)^2}{V(x)}\nu(x)dx &=\P_\nu(X\in B)\int_{B}\frac{(f(x)-c)^2}{V(x)}\nu_B(x)dx\\
&\leq \int_B(f(x)-c)^2 \nu_B(x)dx \mbox{ as $V(x)\geq 1$ and $\P_\nu(X\in B)\leq 1$}\\
&\leq C^2R^2\int_B\|\nabla f(x)\|^2\nu_B(x)dx \mbox{ by Lemma \ref{lm:gen_uni}}\\
&\leq C^2R^2\int \|\nabla f(x)\|^2\nu(x)dx.
\end{split}\]
Putting the two parts together, we have 
\[
\var_\nu(f) \leq \left(\frac{1}{\lambda}+\frac{bC^2R^2}{\lambda}\right)\E_{\nu}[\|\nabla f(X)\|^2].\]
\end{proof}

\subsection{Mean difference estimates}
As explained in Section \ref{sec:roadmap}, the key step in our analysis lies in building mean difference estimates. 
In this section, we present several important results for the mean difference estimates.

Our first result shows that we can replace a density having Lyapunov functions with a uniform distribution, while keeping the difference controlled. 
\begin{lemma} \label{lm:nu_uni}
Suppose $\nu$ has a $(\lambda,b,B(x_0,R),C)$-Lyapunov function, then 
\[\left(\E_{\nu}[f(X)]-\E_{u_B}[f(U)]\right)^2\leq 4\frac{1+bR^2C^2}{\lambda}\E_{\nu}[\|\nabla f(X)\|^2].\]
\end{lemma}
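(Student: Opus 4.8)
The plan is to interpolate between $\E_\nu[f]$ and $\E_{u_B}[f]$ through the intermediate quantity $\E_{\nu_B}[f]$, where $\nu_B$ is $\nu$ conditioned on $B=B(x_0,R)$, and then bound each of the two resulting pieces. By the elementary inequality $(a+b)^2\le 2a^2+2b^2$, it suffices to control $\left(\E_\nu[f(X)]-\E_{\nu_B}[f(X)]\right)^2$ and $\left(\E_{\nu_B}[f(X)]-\E_{u_B}[f(U)]\right)^2$ separately, each by a constant multiple of $\tfrac{1+bR^2C^2}{\lambda}\E_\nu[\|\nabla f\|^2]$.

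For the second piece, I would use the comparison \eqref{eq:lm2_bd} from the proof of Lemma \ref{lm:gen_uni}, namely $C^{-1}\le \nu_B(x)/u_B(x)\le C$ on $B$. Writing the mean difference as $\int_B f(x)(\nu_B(x)-u_B(x))\,dx$ and subtracting the constant $\E_{u_B}[f]$ inside (since $\int_B(\nu_B-u_B)=0$), Cauchy--Schwarz against $\nu_B(x)-u_B(x)$ (bounded in absolute value by $(C-1)u_B(x)$, hence by $Cu_B$) gives $\left(\E_{\nu_B}[f]-\E_{u_B}[f]\right)^2\le C^2\int_B (f(x)-\E_{u_B}[f])^2 u_B(x)\,dx$, which by Lemma \ref{lm:uniform_ball} is at most $C^2R^2\E_{u_B}[\|\nabla f\|^2]\le C^3 R^2\E_{\nu_B}[\|\nabla f\|^2]\le C^3R^2(\P_\nu(X\in B))^{-1}\E_\nu[\|\nabla f\|^2]$. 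To avoid the $(\P_\nu(X\in B))^{-1}$ blowup I would instead apply the $u_B$--PI more carefully, or simply note $\P_\nu(X\in B)$ is bounded below — but the cleaner route is to route everything through $u_B$ first and use $\int_B(\cdot)\nu_B\le \int_B(\cdot)\nu / \P_\nu(B)$ only where the probability cancels; alternatively, bound $\int_B(f-\E_{u_B}f)^2 u_B$ directly by $\int(f-\E_{u_B}f)^2$ against $\nu$ via the $C$-comparison and Holley--Stroock.

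For the first piece, the mean difference between $\nu$ and its own restriction $\nu_B$, I would exploit the Lyapunov inequality $\lc_\nu V\le -\lambda V + b\mathbf 1_B$ directly, mimicking the computation in the proof of Proposition \ref{prop:bakry_lyap}. Indeed $\E_\nu[f]-\E_{\nu_B}[f]$ involves the mass $\P_\nu(X\notin B)$ and the behaviour of $f$ outside $B$; choosing the free constant to be $\E_{u_B}[f]$ and using $\int (f-c)^2\nu \le \tfrac1\lambda\int\tfrac{-\lc_\nu V}{V}(f-c)^2\nu + \tfrac{b}{\lambda}\int_B\tfrac{(f-c)^2}{V}\nu$ exactly reproduces the bound $\var_\nu(f)\le \tfrac{1+bR^2C^2}{\lambda}\E_\nu[\|\nabla f\|^2]$, and the same manipulation shows $\left(\E_\nu[f]-c\right)^2\le \var_\nu(f)$ plus lower-order terms, so in fact $\left(\E_\nu[f]-\E_{u_B}[f]\right)^2\le \var_\nu(f-\E_{u_B}[f]\cdot\mathbf 1) $ combined with the Proposition \ref{prop:bakry_lyap} bound. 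Assembling: $\left(\E_\nu[f]-\E_{u_B}[f]\right)^2 \le 2\var_\nu(f) + 2(\text{second piece}) \le 4\tfrac{1+bR^2C^2}{\lambda}\E_\nu[\|\nabla f\|^2]$, after checking that $C^2R^2\le \tfrac{1+bR^2C^2}{\lambda}$ type comparisons absorb the constants (this uses $\lambda,b$ of the Lyapunov triple and $V\ge 1$).

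The main obstacle I anticipate is bookkeeping the constants so that the final coefficient is exactly $4$ rather than some larger absolute constant: one must be careful that the $u_B$-to-$\nu$ transfer does not introduce an extra factor of $\P_\nu(X\in B)^{-1}$, and that the two applications (the $u_B$ Poincaré inequality with constant $R^2$, and the Lyapunov/Bakry bound with constant $\tfrac{1+bR^2C^2}{\lambda}$) are combined with the right weights. The cleanest way to guarantee the clean ``$4$'' is probably to avoid the detour through $\nu_B$ entirely: set $c=\E_{u_B}[f]$ and run the Proposition \ref{prop:bakry_lyap} argument verbatim, observing that its first display already yields $\int(f-c)^2\nu\le \tfrac{1+bR^2C^2}{\lambda}\E_\nu[\|\nabla f\|^2]$ for this specific $c$, and then $\left(\E_\nu[f]-c\right)^2=\left(\E_\nu[f-c]\right)^2\le \E_\nu[(f-c)^2]\le \tfrac{1+bR^2C^2}{\lambda}\E_\nu[\|\nabla f\|^2]$ by Jensen — which in fact gives the bound with constant $1$, and the factor $4$ in the statement is simply slack. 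I would present this short route as the proof.
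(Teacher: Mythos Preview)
Your final ``short route'' is essentially the paper's proof: the paper writes $(\bar f_\nu-\bar f_{u_B})^2\le 2\E_\nu[(f-\bar f_\nu)^2]+2\E_\nu[(f-\bar f_{u_B})^2]$ (add and subtract $f(x)$ pointwise, then integrate) and bounds each term by $\tfrac{1+bR^2C^2}{\lambda}\E_\nu[\|\nabla f\|^2]$ using Proposition~\ref{prop:bakry_lyap}, whose proof indeed establishes the stronger inequality $\E_\nu[(f-c)^2]\le \tfrac{1+bR^2C^2}{\lambda}\E_\nu[\|\nabla f\|^2]$ for the specific constant $c=\E_{u_B}[f]$. Your observation that Jensen on that single term already gives the bound with constant $1$ is correct and slightly sharper; the earlier detour through $\nu_B$ in your proposal is unnecessary and should be dropped.
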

\begin{proof}
Let $\bar{f}_\nu=\E_{\nu}[f(X)]$ and $\bar{f}_{u_B}=\E_{u_B}[f(U)]$.
Then
\[\begin{split}
\left(\bar{f}_\nu - \bar{f}_{u_B}\right)^2 &\leq 2\E_\nu[(f(X)-\bar{f}_\nu)^2]+2\E_\nu[(f(X)-\bar{f}_{u_B})^2]\\
&\leq 4\frac{1+bR^2C^2}{\lambda}\E[\|\nabla f(X)\|^2] \mbox{ by Proposition \ref{prop:bakry_lyap}}.
\end{split}\]
\end{proof}

Our second result bound the mean difference when moving from a big Uniform ball to a small Uniform ball with the same center.
 
\begin{lemma}\label{lm:uni_uni}
Consider $B_r=B(x_0, r)$ and $B_R=B(x_0,R)$ with $R\geq r$. Then when $d=1$,
\[\left(\E_{u_{B_r}}[f(X)] - \E_{u_{B_R}}[f(X)]\right)^2\leq R^2\log(R/r)\E_{u_{B_R}}[\|\nabla f(X)\|^2];\]
when $d\geq 2$,
\[\left(\E_{u_{B_r}}[f(X)] - \E_{u_{B_R}}[f(X)]\right)^2\leq \frac{R^{d+1}}{(d-1)r^{d-1}}\E_{u_{B_R}}[\|\nabla f(X)\|^2].\]
\end{lemma}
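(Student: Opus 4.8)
The plan is to bound the mean difference $\E_{u_{B_r}}[f] - \E_{u_{B_R}}[f]$ by comparing both expectations against a common reference — the most natural choice being to write the difference as an integral of directional derivatives along radial rays emanating from $x_0$, or equivalently to introduce the intermediate quantity $f(x_0)$ (or an average over a thin shell). Since the estimate must be in terms of $\E_{u_{B_R}}[\|\nabla f\|^2]$, the key is a weighted one-dimensional Hardy-type inequality along radii, weighted by the correct $d$-dimensional Jacobian $\mathrm{s}^{d-1}$. First I would reduce to $x_0 = 0$ by translation invariance, and pass to polar coordinates $x = \mathrm{s}\omega$ with $\mathrm{s} \in [0,R]$, $\omega \in S^{d-1}$, so that $u_{B_R}$ has radial marginal density proportional to $\mathrm{s}^{d-1}$ on $[0,R]$ and uniform angular part, and similarly for $u_{B_r}$.

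The main computation: for each fixed direction $\omega$, write $f(\mathrm{s}\omega) - f(0) = \int_0^{\mathrm{s}} \partial_t f(t\omega)\, dt$, and by Cauchy–Schwarz with a weight $w(t)$ to be chosen,
\[
(f(\mathrm{s}\omega) - f(0))^2 \le \left(\int_0^{\mathrm{s}} \frac{1}{w(t)}\,dt\right)\left(\int_0^{\mathrm{s}} w(t)\, |\partial_t f(t\omega)|^2\, dt\right).
\]
Integrating against the radial density and choosing $w(t) = t^{d-1}$ makes the second factor reproduce $\E_{u_{B_R}}[\|\nabla f\|^2]$ (after integrating over $\omega$ and using $|\partial_t f|^2 \le \|\nabla f\|^2$), while the first factor $\int_0^{\mathrm{s}} t^{-(d-1)}\,dt$ produces the logarithm when $d=1$ and a power of $\mathrm{s}$ when $d\ge 2$. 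One then takes expectations: $(\E_{u_{B_r}}[f]-\E_{u_{B_R}}[f])^2 \le 2(\E_{u_{B_r}}[f]-f(0))^2 + 2(\E_{u_{B_R}}[f]-f(0))^2$ is the crude route, but to match the stated constants it is cleaner to use Jensen's inequality directly: $(\E_{u_{B_r}}[f]-\E_{u_{B_R}}[f])^2 = (\E[f(X)-f(X')])^2$ under an appropriate coupling, or simply bound $\big|\E_{u_{B_r}}[f] - \E_{u_{B_R}}[f]\big|$ by an average of $|f(x)-f(0)|$ over $B_R$ plus over $B_r$ and note $B_r \subset B_R$. I would track the constants carefully so that the $d=1$ bound comes out as $R^2\log(R/r)$ and the $d\ge 2$ bound as $\frac{R^{d+1}}{(d-1)r^{d-1}}$; the $r^{-(d-1)}$ factor comes from evaluating $\int_r^{\cdot}$ or from the worst radial contribution near the inner ball.

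The hard part will be organizing the double integration so that the constants land exactly as stated rather than off by dimension-dependent factors — in particular, deciding whether to split off $f(0)$ or to integrate the shell-averaged values, and making sure the weight $\mathrm{s}^{d-1}$ in the radial measure is correctly normalized against $V_{B_R}$ and $V_{B_r}$. A secondary subtlety is the $d=1$ case, where the ball is the symmetric interval $[-R,R]$ and one must handle both half-lines; there the $\int t^{-0}\,dt = \mathrm{s}$ computation would naively give a bound without a logarithm, so the logarithm must instead arise from integrating $\log(\mathrm{s}/?)$-type terms or from a more careful Hardy inequality with the singular weight handled on $[r,R]$ — I would model this step on the univariate argument already used in Lemma \ref{lm:univar}, applying it with the measure $u_{B_R}$ restricted appropriately and comparing to $u_{B_r}$.
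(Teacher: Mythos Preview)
Your main computation has a genuine gap. With the weight $w(t)=t^{d-1}$, the factor $\int_0^{\mathrm{s}} t^{-(d-1)}\,dt$ diverges at $t=0$ for every $d\ge 2$, so the displayed Cauchy--Schwarz bound on $(f(\mathrm{s}\omega)-f(0))^2$ is infinite and the comparison to $f(0)$ cannot be completed. You flag the $r^{-(d-1)}$ factor as coming ``from the worst radial contribution near the inner ball'', but the proposal gives no mechanism to truncate the integral at $r$ rather than $0$; the alternatives you list (a shell average, a coupling) are not developed. Similarly, for $d=1$ you correctly observe that your computation yields no logarithm --- this is another symptom that comparing to the center point is not the right reference.

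The paper avoids the singularity at the origin by a scaling trick rather than a pointwise reference. After normalizing to $r=1$, it writes $\E_{u_{B_R}}[f]=\frac{1}{C_V}\int_\Omega\int_0^1 f(Rt\,\xi(\theta))\,t^{d-1}\,dt\,d_{S^{d-1}}\theta$, i.e.\ both means become integrals over the \emph{same} unit ball, with integrands $f(Rt\xi)$ and $f(t\xi)$. The difference is then $\int_1^R \partial_s f(st\xi)\,ds$, an integral in the scaling parameter $s\in[1,R]$ along each ray --- crucially, this never passes through the origin. After Cauchy--Schwarz in $s$, Fubini, and the substitution $r=st$, the extra factor is $\int_1^R s^{-d}\,ds$, which yields $(d-1)^{-1}$ for $d\ge 2$ and $\log R$ for $d=1$. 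The general-$r$ case follows by rescaling $X\mapsto X/r$. This radial scaling is the key idea you are missing.
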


\begin{proof}
Without loss of generality, we assume $x_0=0$. 

\noindent{\bf We first consider the case in which  $r=1$ and $d\geq2$.}
Let $C_V$ denote the volume of a $d$-dimensional unit ball. 
Consider the spherical coordinate of $x$.
In particular, let $t\in[0,R]$ denote the radial coordinate,
and $\theta=(\theta_1, \theta_2, \dots, \theta_{d-1})$ denote the angular coordinate, i.e., it is a $(d-1)$ dimensional vector with
$\theta_i\in [0,\pi]$ for $i=1, \dots, d-2$ and $\theta_{n-1}\in[0,2\pi)$. 
We also write $\xi(\theta)$ be a d-dimensional vector on $S^{d-1}$  with $\xi_1(\theta)=\cos(\theta_1)$, 
\[\xi_i(\theta)=\sin(\theta_1)\cdots\sin(\theta_{i-1})\cos(\theta_i)\]
for $1<i<d$, and $\xi_{d}(\theta)=\sin(\theta_1)\cdots\sin(\theta_{d-1})$. Then, $x=r\xi(\theta)$.
We also write 
\[d_{S^{d-1}}\theta = \sin^{d-2}(\theta_1)\sin^{d-3}(\theta_2)\dots \sin(\theta_{d-1}) d\theta\] 
and $\Omega = [0,\pi]^{d-2}\times[0,2\pi)$. Then
\[\begin{split}
\E_{u_{B_R}}[f(X)]&=\frac{1}{C_V R^d}\int_{\Omega}\int_{0}^{R} f(t\xi(\theta)) t^{d-1} dt d_{S^{d-1}}\theta\\
&=\frac{1}{C_V}\int_{\Omega}\int_{0}^{1} f(Rt\xi(\theta)) t^{d-1} dt d_{S^{d-1}}\theta.
\end{split}\]
Using the spherical coordinate representation, we have
\[\begin{split}
&\left(\E_{u_{B_1}}[f(X)] - \E_{u_{B_R}}[f(X)]\right)^2\\ 
= &\left(\frac{1}{C_V}\int_{\Omega}\int_{0}^{1} f(t\xi(\theta)) t^{d-1} dt d_{S^{d-1}}\theta - \frac{1}{C_V}\int_{\Omega}\int_{0}^{1} f(Rt\xi(\theta)) t^{d-1} dt d_{S^{d-1}}\theta\right)^2\\
\leq& \frac{1}{C_V}\int_{\Omega}\int_{0}^{1}(f(Rt\xi(\theta)) - f(t\xi(\theta)))^2t^{d-1}dt d_{S^{d-1}}\theta ~\mbox{ by Jensen's inequality}\\
=& \frac{1}{C_V}\int_{\Omega}\int_{0}^{1}\left(\int_{1}^{R}\sum_{i=1}^{d}\nabla_{x_i}f(st\xi(\theta))t\xi_i(\theta)ds\right)^2
t^{d-1}dt d_{S^{d-1}}\theta \\
\leq& \frac{1}{C_V}\int_{\Omega}\int_{0}^{1}R\int_{1}^{R}\left(\sum_{i=1}^{d}\nabla_{x_i}f(st\xi(\theta))t\xi_i(\theta)\right)^2ds
t^{d-1}dt d_{S^{d-1}}\theta ~ \mbox{ by Jensen's inequality}\\
\leq& \frac{R}{C_V}\int_{\Omega}\int_{0}^{1}\int_{1}^{R}\|\nabla f(st\xi(\theta))\|^2ds t^{d+1}dt d_{S^{d-1}}\theta ~\mbox{ by Cauchy-Schwarz inequality and $\|\xi\|=1$}\\
=& \frac{R}{C_V} \int_{1}^{R}\int_{\Omega}\int_{0}^{1}\|\nabla f(st\xi(\theta))\|^2t^{d+1}dt d_{S^{d-1}}\theta ds\\
=& \frac{R}{C_V}\int_{1}^{R}\int_{\Omega}\int_{0}^{s}\|\nabla f(r\xi(\theta))\|^2r^{d+1}dr d_{S^{d-1}}\theta \frac{1}{s^{d+2}}ds\mbox{ by letting $r=st$} \\
\leq&\frac{R}{C_V}\int_{1}^{R}\int_{\Omega}\int_{0}^{s}\|\nabla f(r\xi(\theta))\|^2r^{d-1}dr d_{S^{d-1}}\theta \frac{1}{s^{d}}ds\\
\leq& R^{d+1} \left(\frac{1}{C_VR^d}\int_{\Omega}\int_{0}^{R}\|\nabla f(r\xi(\theta))\|^2r^{d-1}dr d_{S^{d-1}}\theta\right) \int_{1}^{R}\frac{1}{s^{d}} ds\\
\leq&\frac{R^{d+1}}{d-1}\E_{u_{B_R}}[\|\nabla f(X)\|^2]. 
\end{split}\]

\noindent{\bf When $d=1$,} following similar arguments as above, we can show that
\[\begin{split}
\left(\E_{u_{B_1}}[f(X)] - \E_{u_{B_R}}[f(X)]\right)^2
= & \left(\frac{1}{2}\int_{-1}^{1}f(Rt)-f(t)dt\right)^2\\
\leq&\frac{1}{2}\int_{-1}^{1}R\int_{1}^{R}\|\nabla f(st)\|^2ds t^2 dt\\
=&R^2\int_{1}^{R}\frac{1}{2R}\int_{-s}^{s}\|\nabla f(r)\|^2r^2dr \frac{1}{s^3}ds \\
\leq &R^2\int_{1}^{R}\frac{1}{2R}\int_{-s}^{s}\|\nabla f(r)\|^2dr \frac{s^2}{s^3}ds \\
\leq&R^2(\log(R) -1)\left(\frac{1}{2R}\int_{-R}^{R}\|\nabla f(r)\|^2dr\right).
\end{split}\]

\noindent{\bf For general $r>0$,} we can simply let $Z=X/r$, $g(X)=f(X/r)$ and $q=R/r$. As 
\[
\E_{u_{B_1}}[g(Z)]=\E_{u_{B_r}}[f(X)] \mbox{ and }
\E_{u_{B_q}}[g(Z)]=\E_{u_{B_R}}[f(X)],
\]
\[\begin{split}
\left(\E_{u_{B_r}}[f(X)]-\E_{u_{B_R}}[f(X)]\right)^2
=&\left(\E_{u_{B_1}}[g(Z)]-\E_{u_{B_q}}[g(Z)]\right)^2\\
&\leq \frac{q^{d+1}}{d-1}\E_{u_{B_q}}[\|\nabla g(X)\|^2].
\end{split}\]
Then, as
\[
\E_{u_{B_q}}[\|\nabla g(Z)\|^2]
=r^2\E_{u_{B_R}}[\|\nabla f(X)\|^2],
\]
we have our claim. 
\end{proof}

The next result is our main result on the mean difference.
\begin{prop}
\label{prop:swap}
Consider four densities $\nu^x_1,\nu^x_2,\nu^y_1,\nu^y_2$.
Suppose $\nu^x_i$ is a $\Ly(r_i,q,a)$-density with center $m_i$ for $i=1,2$. Similarly, suppose $\nu^y_i$ is a $\Ly(R_i,Q,A)$-density with center $m_i$ for $i=1,2$. Moreover, for $i=1,2$,  there are constants $R,r, a,A$ so that 
$R_i\leq R$ and $R_i/r_i \leq R/r$.
Then 
\[\begin{split}
&\left(\E_{\nu_1^x\nu_2^{y}}[f(X,Y)]-\E_{\nu^x_2\nu_1^{y}}[f(X,Y)] \right)^2\\
\leq&\Xi_x \int \|\nabla_x f(x,y)\|^2(\nu^x_1(x)\nu_2^{y}(y)+\nu^x_2(x)\nu_1^{y}(y))dxdy\\
&+\Xi_y \int \|\nabla_y f(x,y)\|^2(\nu^x_1(x)\nu_2^{y}(y)+\nu^x_2(x)\nu_1^{y}(y))dxdy\\
&+\Xi_e \int(f(x,y) - f(y,x))^2\left(\nu^x_1(x)\nu^{y}_2(y)\wedge \nu_2^{x}(y) \nu^y_1(x)\right)dxdy,
\end{split}\]
where
\[\begin{split}
\Xi_x =28qA,
~ \Xi_y=28Q + 7aA\left(\frac{R^{d+1}}{r^{d-1}}\right)\left(\log\left(\frac{R}{r}\right)\right)^{1_{d=1}},
\mbox{ and }\Xi_e=7\left(\frac{R}{r}\right)^daA .
\end{split}\]
\end{prop}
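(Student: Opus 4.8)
\textbf{Proof strategy for Proposition \ref{prop:swap}.}
The plan is to interpolate between the two expectations through a telescoping chain of eight joint laws for $(X,Y)$, changing one ``coordinate distribution'' at a time, and then to combine the seven consecutive difference–squares by the Cauchy--Schwarz bound $(\sum_{k=0}^{6}a_k)^2\le 7\sum_{k=0}^{6}a_k^2$ — this is the origin of the factor $7$ in all three constants. Write $u_i^r=u_{B(m_i,r_i)}$ and $u_i^R=u_{B(m_i,R_i)}$. I would take the chain, in order, to change the law of $(X,Y)$ as
$\nu^x_1\nu^y_2\to\nu^x_1 u_2^R\to u_1^r u_2^R\to u_1^r u_2^r$, then apply the swap $f(x,y)\mapsto f(y,x)$ (which, after relabeling the two coordinates, turns $u_1^r u_2^r$ acting on $f(y,x)$ into $u_2^r u_1^r$ acting on $f(x,y)$), and then run the mirror image of the first three moves, $u_2^r u_1^r\to u_2^r u_1^R\to\nu^x_2 u_1^R\to\nu^x_2\nu^y_1$. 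Throughout we use that the small ball $B(m_i,r_i)$ sits inside $B(m_i,R_i)$, i.e.\ $r_i\le R_i$, which is the situation in our applications.

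Each move is bounded by an already established lemma. The moves $\nu^y_2\to u_2^R$ and $u_1^R\to\nu^y_1$ apply Lemma \ref{lm:nu_uni} to $\nu^y_2$ resp.\ $\nu^y_1$, with $f$ treated as a function of $y$ (the other coordinate carried by a fixed measure), followed by Jensen in $x$; they contribute $4Q\int\|\nabla_y f\|^2(\cdots)$ against the weights $\nu^x_1\nu^y_2$ resp.\ $\nu^x_2\nu^y_1$ with no density comparison. The moves $\nu^x_1\to u_1^r$ and $u_2^r\to\nu^x_2$ apply Lemma \ref{lm:nu_uni} to $\nu^x_1$ resp.\ $\nu^x_2$ with $f$ treated as a function of $x$, giving $4q\int\|\nabla_x f\|^2(\cdots)$; the uniform factor remaining on the other coordinate ($u_2^R$ resp.\ $u_1^R$) is then dominated by $A\nu^y_2$ resp.\ $A\nu^y_1$ via the $\Ly(\cdot,Q,A)$-property, which is what produces the $A$ in $\Xi_x$. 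The shrink/expand moves $u_2^R\leftrightarrow u_2^r$ and $u_1^r\leftrightarrow u_1^R$ apply Lemma \ref{lm:uni_uni} to the $y$-coordinate for each fixed $x$ and then Jensen in $x$; after bounding $R_i\le R$, $R_i/r_i\le R/r$, $1/(d-1)\le 1$, and dominating the two leftover uniform factors by $a\nu^x$ resp.\ $A\nu^y$, this yields the $aA\,\tfrac{R^{d+1}}{r^{d-1}}(\log\tfrac Rr)^{1_{d=1}}$ contribution to $\Xi_y$.

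The decisive move is the swap. Jensen's inequality gives
\[
\Big(\E_{u_1^r u_2^r}[f(X,Y)]-\E_{u_1^r u_2^r}[f(Y,X)]\Big)^2\le \int (f(x,y)-f(y,x))^2\,u_1^r(x)\,u_2^r(y)\,dxdy ,
\]
so it remains to dominate $u_1^r(x)u_2^r(y)$ by a constant multiple of the ``maximal coupling density'' $\nu^x_1(x)\nu^y_2(y)\wedge\nu^x_2(y)\nu^y_1(x)$. This is where all four $\Ly$-densities enter at once: on one hand $u_1^r(x)\le a\nu^x_1(x)$ and $u_2^r(y)\le (R_2/r_2)^d u_2^R(y)\le (R/r)^dA\,\nu^y_2(y)$; on the other hand $u_2^r(y)\le a\nu^x_2(y)$ and $u_1^r(x)\le (R_1/r_1)^d u_1^R(x)\le (R/r)^dA\,\nu^y_1(x)$. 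Taking the minimum of the two product bounds gives $u_1^r(x)u_2^r(y)\le aA(R/r)^d\big(\nu^x_1(x)\nu^y_2(y)\wedge\nu^x_2(y)\nu^y_1(x)\big)$, i.e.\ exactly the coefficient $\Xi_e/7$. I expect this domination — the observation that the maximal-coupling density is large enough to absorb a product of two small uniform balls precisely because each ball is comparable both to its own $\nu^x$-density and, through a volume ratio, to the wide $\nu^y$-density — to be the crux; everything else is bookkeeping.

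Finally, collecting the seven bounds and multiplying by the Cauchy--Schwarz factor $7$: the two $x$-gradient contributions combine, over the symmetric weight $\nu^x_1\nu^y_2+\nu^x_2\nu^y_1$, into $28qA$; the $y$-gradient contributions combine into $28Q+7aA\,\tfrac{R^{d+1}}{r^{d-1}}(\log\tfrac Rr)^{1_{d=1}}$; and the swap contribution into $7aA(R/r)^d$. These are precisely $\Xi_x,\Xi_y,\Xi_e$, completing the proof.
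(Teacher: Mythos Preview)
Your proposal is correct and follows essentially the same approach as the paper: the same seven-step telescoping chain through $\nu_1^x\nu_2^y\to\nu_1^x u_{B(m_2,R_2)}\to u_{B(m_1,r_1)}u_{B(m_2,R_2)}\to u_{B(m_1,r_1)}u_{B(m_2,r_2)}$, the swap, and then the mirror image, with Lemmas \ref{lm:nu_uni} and \ref{lm:uni_uni} applied exactly as you describe and the same density-domination argument for the swap step. Your explanation of why the factor $7$ appears and how the swap domination works is in fact more explicit than the paper's writeup.
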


\begin{proof}
To simplify the notations, we define $\B_i=B(m_i,R_i)$, $B_i=B(m_i,r_i)$, and 
\[
\eta_i(y)=\int f(x,y)\nu^x_i(x)dx
\]
for $i=1,2$.

\noindent{\bf Step 1.} We change $\pi_2^y$ to $u_{\B_2}$.
By Lemma \ref{lm:nu_uni}, we can control the difference in means by
\[\begin{split}
&\left(\E_{\pi_1^x\nu_2^{y}}[f(X,Y)] - \E_{\nu_1^xu_{\B_2}}[f(X,Y)]\right)^2\\
=&\left(\int \eta_1(y) \nu_2^{y}(y)dy - \int \eta_1(y)u_{\B_2}(y)dy\right)^2\\ 
\leq& 4Q\int\|\nabla_y \eta_1(y)\|^2\nu_2^{y}(y)dy\\
\leq& 4Q\int\|\nabla_yf(x,y)\|^2\nu^x_1(x)\nu_2^{y}(y)dy
\mbox{ by Jensen's inequality.}
\end{split}.\]
Likewise,  we change $\pi_1^x$ to $u_{B_1}$. By Lemma \ref{lm:nu_uni}, we can control the difference in means by:
\[\begin{split}
&\left(\E_{\pi_1^xu_{\B_2}}[f(X,Y)] - \E_{u_{B_1}u_{\B_2}}[f(X,Y)]\right)^2\\
=&\left(\int \left(\int f(x,y)\nu^x_1(x)dx-\int f(x,y)u_{B_1}(x)dx\right) u_{\B_2}(y)dy\right)^2\\ 
\leq&\int \left(\int f(x,y)\nu^x_1(x)dx-\int f(x,y)u_{B_1}(x)dx\right)^2 u_{\B_2}(y)dy  \mbox{ by Jensen's inequality}\\
\leq& 4q\int\|\nabla_xf(x,y)\|^2\nu^x_1(x)u_{\B_2}(y)dy\\
\leq& 4qA\int\|\nabla_xf(x,y)\|^2\nu^x_1(x)\pi^y_2(y)dy.
\end{split}\]

\noindent{\bf Step 2.} We replace $u_{\B_2}$ with $u_{B_2}$. By Lemma \ref{lm:uni_uni}, we can control the difference in means by
\[\begin{split}
&\left(\E_{u_{B_1}u_{\B_2}}[f(X,Y)] - \E_{u_{B_1}u_{B_2}}[f(X,Y)]\right)^2\\
\leq&\int \left(\int f(x,y)u_{\B_2}(y)dy-\int f(x,y)u_{B_2}(y)dy\right)^2 u_{B_1}(x)dx\\
\leq& \frac{R^{d+1}}{r^{d-1}}\int \|\nabla_yf(x,y)\|^2u_{\B_2}(y)u_{B_1}(x)dydx\\
\leq& \frac{R^{d+1}}{r^{d-1}}a A \int \|\nabla_yf(x,y)\|^2\nu_1^x(x)\nu_2^{y}(y)dxdy,
\end{split}\]
when $d\geq 2$. If $d=1$, an additional $\log (R/r)$ is needed. 

\noindent{\bf Step 3.} The mean difference in exchanging $B_1$ and $B_2$ can be bounded by the additional replica exchange carre du champ term:
\[\begin{split}
&\left(\E_{u_{B_1}u_{B_2}}[f(X,Y)] - \E_{u_{B_2}u_{B_1}}[f(X,Y)]\right)^2\\
\leq&\int(f(x,y) - f(y,x))^2u_{B_1}(x)u_{B_2}(y)dxdy ~\mbox{ 
by Jensen's inequality}\\
\leq& \left(\frac{R}{r}\right)^d \int(f(x,y) - f(y,x))^2 (u_{B_1}(x)u_{\B_2}(y)\wedge u_{\B_1}(x)u_{B_2}(y))dxdy \mbox{ as $\frac{u_{B_i}(x)}{u_{\B_i}(x)}=\frac{R_i^d}{r_i^d}\leq\left(\frac{R}{r}\right)^d$}\\
\leq&\left(\frac{R}{r}\right)^d aA
\int(f(x,y) - f(y,x))^2 \left(\nu^x_1(x)\nu^{y}_2(y)\wedge \nu_2^{x}(y) \nu^y_1(x)\right)dxdy.
\end{split}\]
Putting the three steps together, we have
\[\begin{split}
&\left(\E_{\nu_1^x\nu_2^{y}}[f(X,Y)]-\E_{\nu^x_2\nu_1^{y}}[f(X,Y)] \right)^2\\
\leq & 
7\left(\E_{\nu_1^x\nu_2^{y}}[f(X,Y)] - \E_{\nu_1^xu_{\B_2}}[f(X,Y)]\right)^2 + 7\left(\E_{\nu_1^xu_{\B_2}}[f(X,Y)] - \E_{u_{B_1}u_{\B_2}}[f(X,Y)]\right)^2\\
&+7\left(\E_{u_{B_1}u_{\B_2}}[f(X,Y)] - \E_{u_{B_1}u_{B_2}}[f(X,Y)]\right)^2\\
&+ 7\left(\E_{\nu_2^x\nu_1^{y}}[f(X,Y)] - \E_{\nu^x_2u_{\B_1}}[f(X,Y)]\right)^2 +7\left(\E_{\nu^x_2u_{\B_1}}[f(X,Y)] - \E_{u_{B_2}u_{\B_1}}[f(X,Y)]\right)^2\\
&+ 7\left(\E_{u_{B_2}u_{\B_1}}[f(X,Y)] - \E_{u_{B_2}u_{B_1}}[f(X,Y)]\right)^2\\
&+ 7\left(\E_{u_{B_1}u_{B_2}}[f(X,Y)] - \E_{u_{B_2}u_{B_1}}[f(X,Y)]\right)^2\\
\leq&\Xi_x \int \|\nabla_x f(x,y)\|^2(\nu^x_1(x)\nu_2^{y}(y)+\nu^x_2(x)\nu_1^{y}(y))dxdy\\
&+\Xi_y \int \|\nabla_y f(x,y)\|^2(\nu^x_1(x)\nu_2^{y}(y)+\nu^x_2(x)\nu_1^{y}(y))dxdy\\
&+\Xi_e \int(f(x,y) - f(y,x))^2\left(\nu^x_1(x)\nu^{y}_2(y)\wedge \nu_2^{x}(y) \nu^y_1(x)\right)dxdy,
\end{split}\]
where
\[\begin{split}
\Xi_x =28qA,
~ \Xi_y=28Q + 7aA\left(\frac{R^{d+1}}{r^{d-1}}\right)\left(\log\left(\frac{R}{r}\right)\right)^{1_{d=1}},
\mbox{ and }\Xi_e=7\left(\frac{R}{r}\right)^daA .
\end{split}\]
\end{proof}

Under Assumption \ref{aspt:piy}
for each center $m_i$, $i=1,\dots, I$, $\pi^y$ is a $\Ly(R_i,Q,A)$-density with 
$R_i\leq R, R_i/r_i\leq R/r$.
Thus, by setting $\nu_i^x=\nu_i$ and $\nu_i^y = \pi^y$, we have the following corollary of Proposition \ref{prop:swap}.

\begin{cor} \label{cor:mean_difference}
Under Assumptions \ref{aspt:mixture} and \ref{aspt:piy},
\[\begin{split}
&\left(\int f(x,y)\nu_i(x)\pi^y(y) dxdy - \int f(x,y)\nu_j(x)\pi^y(y)dxdy\right)^2\\
\leq& \Xi_x \int \|\nabla_x f(x,y)\|^2(\nu_i(x)\pi^y(y)+\nu_j(x)\pi^y(y))dxdy\\
&+\Xi_y \int \|\nabla_y f(x,y)\|^2(\nu^x_i(x)\pi^y(y)+\nu_j(x)\pi^y(y))dxdy\\
&+\Xi_e\int(f(x,y) - f(y,x))^2\left(\nu^x_i(x)\pi^{y}(y)\wedge \nu_j^{x}(y) \pi^y(x)\right)dxdy.
\end{split}\]
where
\[\begin{split}
\Xi_x =28qA,
~ \Xi_y=28Q + 7aA\left(\frac{R^{d+1}}{r^{d-1}}\right)\left(\log\left(\frac{R}{r}\right)\right)^{1_{d=1}},
\mbox{ and }\Xi_e=7\left(\frac{R}{r}\right)^daA .
\end{split}\]
\end{cor}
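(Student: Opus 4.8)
The plan is to obtain this corollary as a direct specialization of Proposition \ref{prop:swap}, with essentially no new work beyond matching hypotheses. I would take the four densities appearing in Proposition \ref{prop:swap} to be $\nu_1^x = \nu_i$, $\nu_2^x = \nu_j$ (the two mixture components whose conditional $X$-means we want to compare) and $\nu_1^y = \nu_2^y = \pi^y$. The one point that deserves attention is that Proposition \ref{prop:swap} requires $\nu_k^y$ to be an $\Ly(R_k,Q,A)$-density \emph{with center $m_k$}, so the single density $\pi^y$ must simultaneously be treated as a density centered at $m_i$ and as one centered at $m_j$. This is precisely the content of Assumption \ref{aspt:piy}: $\pi^y$ is an $\Ly(R_k,Q,A)$-density with center $m_k$ for \emph{every} $k$, so both instantiations are legitimate with the same constants $Q,A$.

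Next I would verify the remaining hypotheses. Assumption \ref{aspt:mixture} gives that each $\nu_i$ is an $\Ly(r_i,q,a)$-density with center $m_i$, which is exactly what is needed of $\nu_k^x$, and the common constants $q,a$ do not depend on the pair $(i,j)$. The comparison constants $R,r$ and the inequalities $R_i\le R$, $R_i/r_i\le R/r$ are supplied by Assumption \ref{aspt:bound}. With all hypotheses in place, Proposition \ref{prop:swap} applies and yields the claimed inequality verbatim: under the substitution, the sum $\nu_1^x(x)\nu_2^y(y)+\nu_2^x(x)\nu_1^y(y)$ becomes $\nu_i(x)\pi^y(y)+\nu_j(x)\pi^y(y)$, the ``maximal coupling density'' $\nu_1^x(x)\nu_2^y(y)\wedge\nu_2^x(y)\nu_1^y(x)$ becomes $\nu_i(x)\pi^y(y)\wedge\nu_j(y)\pi^y(x)$, and the constants $\Xi_x,\Xi_y,\Xi_e$ carry over unchanged.

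Since the statement is a direct instantiation, there is no genuine analytic obstacle here; the only thing to be careful about is bookkeeping — keeping the ``$x$-side'' radii $r_i$ (coming from the $\nu_i$'s) separate from the ``$y$-side'' radii $R_i$ (coming from $\pi^y$), and threading the center labels $i$ and $j$ consistently through the argument. An equally valid alternative would be to transcribe Steps 1--3 of the proof of Proposition \ref{prop:swap} with these specialized densities, but quoting the proposition is cleaner and avoids repetition.
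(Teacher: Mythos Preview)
Your proposal is correct and matches the paper's approach exactly: the paper derives the corollary by setting $\nu_i^x=\nu_i$ and $\nu_i^y=\pi^y$ in Proposition~\ref{prop:swap}, using Assumption~\ref{aspt:piy} to justify that $\pi^y$ is simultaneously an $\Ly(R_k,Q,A)$-density at each center $m_k$. Your observation that Assumption~\ref{aspt:bound} is also needed (for the bounds $R_i\le R$ and $R_i/r_i\le R/r$) is accurate and in fact slightly more precise than the corollary's stated hypotheses, which omit it.
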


\subsection{Proof of Theorem \ref{th:main1}}
Recall that
\[\begin{split} 
&\int (f(x,y)-\bar\theta)^2\pi^x(x)\pi^{y}(y)dxdy\\
=&\sum_{i=1}^{I}p_i\int (f(x,y)-\bar\theta)^2\nu_i(x)\pi^{y}(y)dxdy\\
\leq&3\sum_{i=1}^{I}p_i \underbrace{\int (f(x,y)-\eta_i(y))^2\nu_i(x)\pi^{y}(y)dxdy}_{\mbox{(A)}}
+3\sum_{i=1}^{I}p_i\underbrace{\int (\eta_i(y)-\theta_i)^2\pi^{y}(y)dy}_{\mbox{(B)}}\\
&+3\sum_{i,j}p_ip_j\underbrace{(\theta_i-\theta_j)^2}_{\mbox{(C)}}.
\end{split}\]
\paragraph{For part (A).} By Assumptions \ref{aspt:mixture} and Proposition \ref{prop:bakry_lyap}, we have
\[
\int (f(x,y)-\eta_i(y))^2\nu_i(x)\pi^{y}(y)dxdy
\leq q\int \|\nabla_xf(x,y)\|^2\nu_i(x)\pi^{y}(y)dxdy.
\]
\paragraph{For part (B).} By Assumptions \ref{aspt:piy} and Proposition \ref{prop:bakry_lyap}, we have
\[\begin{split}
&\int (\eta_i(y)-\theta_i)^2\pi^{y}(y)dy\\
\leq& Q \int \|\nabla\eta_i(y)\|^2\pi^{y}(y)dy\\
\leq& \frac{Q}{ \tau} \tau \int \|\nabla_yf(x,y)\|^2\nu_i(x)\pi^{y}(y)dy \mbox{ by Jensen's inequality.}
\end{split}\]
\paragraph{For part (C).} By Corollary \ref{cor:mean_difference}, we have
\[\begin{split}
&\left(\int f(x,y)\nu_i(x)\pi^y(y) dxdy - \int f(x,y)\nu_j(x)\pi^y(y)dxdy\right)^2\\
\leq& \Xi_x \int \|\nabla_x f(x,y)\|^2(\nu_i(x)\pi^y(y)+\nu_j(x)\pi^y(y))dxdy\\
&+\frac{\Xi_y}{\tau} \tau \int \|\nabla_y f(x,y)\|^2(\nu^x_i(x)\pi^y(y)+\nu_j(x)\pi^y(y))dxdy\\
&+\frac{\Xi_e}{\rho}\rho \int(f(x,y) - f(y,x))^2\left(\nu^x_i(x)\pi^{y}(y)\wedge \nu_j^{x}(y) \pi^y(x)\right)dxdy.
\end{split}\]

Putting the bounds for (A), (B), and (C) together, as
\[
\sum_{i,j} p_ip_j (\nu_i(x)\pi^y(y)+\nu_j(x)\pi^y(y)) = 2\pi(x)\pi^y(y)
\]
and
\[\begin{split}
\sum_{i,j} p_ip_j (\nu_i(x)\pi^y(y)\wedge\nu_j(y)\pi^y(x)) &\leq 
\left(\sum_{i, j}p_ip_j\nu_i(x)\pi^y(y)\right)\wedge \left(\sum_{i, j}p_ip_j\nu_j(y)\pi^y(x)\right)\\
&=\pi(x)\pi^y(y) \wedge \pi(y)\pi^y(x),
\end{split}\]
we have
\[\begin{split}
\var_{\pi\pi^y}(f(X,Y))=&\int (f(x,y)-\bar\theta)^2\pi^x(x)\pi^{y}(y)dxdy\\
\leq& 3\left(q+2\Xi_x\right) \int \|\nabla_x f(x,y)\|^2\pi(x)\pi^y(y)dxdy\\
&+3\left(\frac{Q}{ \tau} + 2\frac{\Xi_y}{\tau}\right)\int \tau \|\nabla_y f(x,y)\|^2\pi(x)\pi^y(y)dxdy\\
&+3\frac{\Xi_e}{\rho}\rho\int(f(x,y) - f(y,x))^2\left(\pi(x)\pi^{y}(y)\wedge \pi^{x}(y) \pi^y(x)\right)dxdy\\
\leq &\kappa\E_{\pi\pi^y}[\Gamma_R(f(X,Y))],
\end{split}\]
where
\[\begin{split}
\kappa
=\max\left\{3(56A+1)q, 
~\frac{3}{\tau}\left(57Q + 14aA\left(\frac{R^{d+1}}{r^{d-1}}\right)\left(\log\left(\frac{R}{r}\right)\right)^{1_{d=1}}\right),\frac{7aA}{\rho}\left(\frac{R}{r}\right)^d\right\}.
\end{split}\]

\section{Analysis of Multiple ReLD} \label{sec:mReLD_proof}
We rephrase Theorem \ref{th:main2ez} into a more detailed version as follows:
\begin{theorem} \label{th:main2}
For mReLD defined in \eqref{eq:mReLD}, under Assumptions \ref{aspt:mixture2} and \ref{aspt:bound2},
\[\var_{\pi_{0:K}}(f(\bX_{0:K})) \leq \kappa \E_{\pi_{0:K}}[\Gamma^K_R(f(\bX_{0:K}))],\]
where
\[\begin{split}
\kappa=&\max_{0\leq k\leq K-1}\max\left\{\sum_{h=2}^{k-2}\frac{3(4\alpha)^{k-h+1}}{\tau_k}\left(8\alpha\gamma\Xi_{x_k}+2\gamma\Xi_{y_{k-1}}\right)\right.\\
&\left.+\frac{3}{\tau_{k}}\left((8\alpha\gamma+2\gamma)\Xi_{x_{k}}+2\gamma\Xi_{y_{k-1}}+2q_k\right),
\sum_{h=0}^{k}\frac{3(4\alpha)^{k-h+2}}{\rho}\gamma \Xi_{e_k}\right\},
\end{split}\]
for any $\alpha,\gamma>1$ with $\tfrac{1}{\alpha}+\tfrac{1}{\gamma}=1$,
and
\[\begin{split}
\Xi_{x_k}& =28q_ka_{k+1},\\
\Xi_{y_k}&=28q_{k+1} 
+ 7\frac{\left(r_{k+1}\right)^{d+1}}{\left(r_k\right)^{d-1}}a_ka_{k+1}\left(\log\left(\frac{r_{k+1}}{r_k}\right)\right)^{1_{d=1}},\\
&\Xi_{y_{(-1)}}=0,\quad\Xi_{e_k}=7\left(\frac{r_{k+1}}{r_k}\right)^da_ka_{k+1}.
\end{split}\]
When $I=2$, we can further refine $\kappa$ to
\[\begin{split}
\kappa=&\max_{0\leq k\leq K-1}\max\left\{\sum_{h=2}^{k-2}\frac{3\alpha^{k-h+1}}{\tau_k}\left(2\alpha\gamma\Xi_{x_k}+2\gamma\Xi_{y_{k-1}}\right)\right.\\
&\left.+\frac{3}{\tau_{k}}\left((2\alpha\gamma+2\gamma)\Xi_{x_{k}}+2\gamma\Xi_{y_{k-1}}+2q_k\right),
\sum_{h=0}^{k}\frac{3\alpha^{k-h+2}}{\rho}\gamma \Xi_{e_k}\right\}.
\end{split}\]
\end{theorem}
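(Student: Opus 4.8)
The plan is to follow the roadmap behind Theorem~\ref{th:main1} (Section~\ref{sec:roadmap}) but to iterate it along the chain of $K+1$ replicas, the new feature being that swaps only couple neighbouring levels. I would argue by induction on $K$, the base case $K=1$ being essentially Theorem~\ref{th:main1}. For the inductive step I first peel off the lowest replica: writing $\pi_0=\sum_i p_i\nu_{0,i}$, setting $\eta_i(\bx_{1:K})=\int f\,\nu_{0,i}(x_0)\,dx_0$ and $\theta_i=\E_{\pi_{1:K}}[\eta_i]$, the Cauchy--Schwarz split of \eqref{eq:main_decomp} gives
\[
\var_{\pi_{0:K}}(f)\le 3\sum_i p_i\,\E_{\nu_{0,i}\pi_{1:K}}\!\big[(f-\eta_i)^2\big]+3\sum_i p_i\var_{\pi_{1:K}}(\eta_i)+3\sum_{i,j}p_ip_j(\theta_i-\theta_j)^2 .
\]
The first sum is a within-component variance in $x_0$ and is $\le q_0\,\E_{\pi_{0:K}}[\|\nabla_{x_0}f\|^2]$ by Proposition~\ref{prop:bakry_lyap} (using $\tau_0=1$), contributing the $q_0/\tau_0$ term at $k=0$. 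The second sum is the variance of $\eta_i$ over the sub-chain on levels $1,\dots,K$, which again satisfies Assumptions~\ref{aspt:mixture2}--\ref{aspt:bound2} with $K-1$ replicas and whose Dirichlet form is a sub-sum of $\Gamma^K_R$; the induction hypothesis bounds it, producing the $\max_{1\le k\le K-1}$ portion of $\kappa$ after pushing gradients through the conditional expectation by Jensen and using $\sum_i p_i\nu_{0,i}=\pi_0$ to reassemble $\pi_{0:K}$ (the exchange terms of $\eta_i$ contract to those of $f$ in the same way). Everything then reduces to the level-$0$ mean differences.

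For $(\theta_i-\theta_j)^2=\big(\E_{\nu_{0,i}\pi_{1:K}}[f]-\E_{\nu_{0,j}\pi_{1:K}}[f]\big)^2$ I would prove a chaining estimate that iterates Proposition~\ref{prop:swap}, generalising Corollary~\ref{cor:mean_difference}: transport mass from mode $m_i$ to mode $m_j$ one level at a time. Expanding $\pi_k=\sum_\ell p_\ell\nu_{k,\ell}$ at an intermediate level and inserting/removing a single component converts a comparison of two ``all-at-one-component'' averages at level $k$ into (i) a swap of components between the adjacent pair $(k,k{+}1)$, to which Proposition~\ref{prop:swap} applies verbatim with $\nu^x_\bullet=\nu_{k,\bullet}$, $\nu^y_\bullet=\nu_{k+1,\bullet}$, generating the costs $\Xi_{x_k},\Xi_{y_k},\Xi_{e_k}$, and (ii) residual mean differences that now live at level $k+1$. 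Iterating up to the top level — where $\pi_K$ is itself an $\Ly$-density centred at every $m_i$, so Proposition~\ref{prop:swap} closes the chain (its $y$-marginal may be $\pi_K$ for both components, and Lemma~\ref{lm:nu_uni} absorbs the last reduction to a uniform ball) — writes $(\theta_i-\theta_j)^2$ as a telescoping sum of the per-level costs $\Xi_{x_h}[\|\nabla_{x_h}f\|^2]$, $\Xi_{y_h}[\|\nabla_{x_{h+1}}f\|^2]$, $\Xi_{e_h}[\,\text{level-}h\text{ swap}\,]$. Because each recursion step fuses two or three sub-differences through a weighted Young inequality with exponents $\alpha,\gamma$ ($\tfrac1\alpha+\tfrac1\gamma=1$), and each component expansion splits over $I$ terms, the weight attached to the level-$h$ cost in the estimate of the level-$0$ difference grows like $(4\alpha)^{\,k-h}$ (like $\alpha^{\,k-h}$ when $I=2$, the component sum being trivial). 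Dividing the level-$k$ gradient costs by $\tau_k$ and the swap costs by $\rho$ to match $\Gamma^K_R$, combining with parts (A) and (B), and taking the worst level yields the stated formula; the slightly different constants at the two ends of each chain are why the generic term appears as $\sum_{h=2}^{k-2}$ plus an explicit ``boundary'' piece.

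The main obstacle I anticipate is the bookkeeping of the Young weights: after all chaining paths are summed, every term $\tau_k\|\nabla_{x_k}f\|^2$ and every exchange term $\rho\,s_k(f-f^{\mathrm{swap}})^2$ of $\Gamma^K_R$ must end up with a \emph{finite} total coefficient, and it is exactly this constraint that forces geometric (rather than factorial) growth in the level index and fixes the role of the free parameters $\alpha,\gamma$; pinning down the multiplicative constants $8\alpha\gamma,\,2\gamma,\,4\alpha$ (and their $I=2$ analogues $2\alpha\gamma,\,2\gamma,\,\alpha$) is the delicate part. A secondary point is that the intermediate replicas are genuine mixtures, not single $\Ly$-densities, so Proposition~\ref{prop:swap} can only be invoked after the component-splitting step, and one must verify — as in Step~3 of Proposition~\ref{prop:swap} and the end of the proof of Theorem~\ref{th:main1} — that the adjacent ``maximal-coupling'' densities $\nu_{k,\bullet}\nu_{k+1,\bullet}$ are dominated, after summing against $p_ip_j$, by the single exchange Dirichlet term $\int(f-f^{\mathrm{swap}})^2\,\big(\pi_k(x_k)\pi_{k+1}(x_{k+1})\big)\wedge\big(\pi_k(x_{k+1})\pi_{k+1}(x_k)\big)\,dx_kdx_{k+1}$. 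The $I=2$ refinement then follows by repeating the count with the mixture sums collapsed.
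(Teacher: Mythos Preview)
Your chaining argument for part (C) --- walking the component indices up through the levels via iterated Proposition~\ref{prop:swap}, with Young weights producing the geometric $(4\alpha)^{l-k}$ growth ($\alpha^{l-k}$ when $I=2$), and closing by dominating $\sum_{i,j}p_ip_j\bigl(\nu_{k,i}\nu_{k+1,j}\bigr)\wedge\bigl(\nu_{k,j}\nu_{k+1,i}\bigr)$ by $(\pi_k\pi_{k+1})\wedge(\pi_k\pi_{k+1})^{\mathrm{swap}}$ --- is precisely the content of the paper's Proposition~\ref{prop:mix2}. That part is on target.

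The gap is in the outer variance decomposition. Peeling off level $0$ with the crude split $\var\le 3[(\mathrm A)+(\mathrm B)+(\mathrm C)]$ and then invoking the induction hypothesis on $(\mathrm B)=\sum_i p_i\var_{\pi_{1:K}}(\eta_i)$ compounds the factor $3$ at every peel: the level-$k$ within-component term ends up multiplied by $3^{k+1}$ rather than by the single $3$ appearing in the stated $\kappa$, so this route cannot deliver the constants of Theorem~\ref{th:main2}. The paper avoids this by first writing the \emph{exact} orthogonal (tower) decomposition
\[
\var_{\pi_{0:K}}(f)=\sum_{k=0}^{K}\E_{0:K}\!\left[\bigl(\E_{(k+1):K}f-\E_{k:K}f\bigr)^{2}\right],
\]
which costs nothing, and only then applies the three-term Cauchy--Schwarz once at each fixed $k$. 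In that setup the ``$(\mathrm B)$'' piece at level $k$ is the variance of a conditional mean in the \emph{single} coordinate $x_{k+1}$ (bounded directly by Proposition~\ref{prop:bakry_lyap} with cost $q_{k+1}$), not a full sub-chain variance requiring recursion; the only induction in the proof lives inside the mean-difference estimate, i.e.\ your chaining for (C). If you replace your induction-on-$K$ peeling by this orthogonal split and keep your argument for (C), you recover the paper's proof and the stated constants.
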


The proof of Theorem \ref{th:main2} builds on the analysis of ReLD and an induction argument. 
We provide a roadmap of our proving strategy first.

\subsection{A roadmap}
\label{sec:mReLDmap}
We introduce the following notations 
\[\begin{split}
&\E_{r:h}[f(\bX_{0:K})] =\int f(\bX_{0:r-1},\by_{r:h},\bX_{h+1:K})\pi_{r:h}(\by_{r:h})d\by_{r:h},\\
&\E_k [f(\bX_{0:K})]  = \int f(\bX_{0:k-1},y,\bX_{k+1:K})\pi_k(y)dx, 
\end{split}\] 
and we write $\E_{(K+1):K} [f(\bX_{0:K})] = f(\bX_{0:K})$ for convenience. 
We also write
\[
\var_{0:K}(f(\bX_{0:K}))=\E_{0:K}\left[\left(f(\bX_{0:K})-\E_{0:K} f(\bX_{0:K})\right)^2\right]
\]
We first note the following decomposition
\[
f(\bX_{0:K}) - \E f(\bX_{0:K}) = \sum_{k=0}^K \left(\E_{(k+1):K} f(\bX_{0:K}) - \E_{k:K} f(\bX_{0:K})\right).
\]
Note that for $j<k$,
\begin{align*}
&\E_{0:K} \left[\left(\E_{(j+1):K} f(\bX_{0:K}) - \E_{j:K} f(\bX_{0:K})\right)\left(\E_{(k+1):K} f(\bX_{0:K}) - \E_{k:K} f(\bX_{0:K})\right)\right]\\
=&\E_{0:K} \left[\E_{k:K}\left[ \left(\E_{(j+1):K} f(\bX_{0:K}) - \E_{j:K} f(\bX_{0:K})\right)\left(\E_{(k+1):K} f(\bX_{0:K}) - \E_{k:K} f(\bX_{0:K})\right)\right]\right]\\
=&\E_{0:K} \left[\left(\E_{(k+1):K} f(\bX_{0:K}) - \E_{k:K} f(\bX_{0:K})\right)\E_{k:K}\left[ \left(\E_{(j+1):K} f(\bX_{0:K}) - \E_{j:K} f(\bX_{0:K})\right)\right]\right]=0.
\end{align*}
Thus, we have the following variance decomposition 
\[\begin{split}
&\E_{0:K}\left[\left(f(\bX_{0:K}) - \E f(\bX_{0:K})\right)^2\right]\\
=& \sum_{k=0}^K \E_{0:K}\left[\left(\E_{(k+1):K} f(\bX_{0:K}) - \E_{r:K} f(\bX_{0:K})\right)^2\right]\\
=& \sum_{k=0}^{K} \E_{0:K}\left[\E_k\left[\left(\E_{(k+1):K} f(\bX_{0:K})- \E_{k:K} f(\bX_{0:K})\right)^2\right]\right]
\end{split}.\]
The above decomposition allows us to focus on 
\[
\E_k\left[\left(\E_{(k+1):K} f(\bX_{0:K})- \E_{k:K} f(\bX_{0:K})\right)^2\right]
\]
individually. Let
\[
\bW_k = \bX_{0:(k-1)}, ~ Y_k = X_{k+1}, ~ \bZ_k=\bX_{(k+2):K}.
\]
We also write $\pi_k^{\bz} = \pi_{(k+2):K}$.
For a fixed $\bW_k=\bw_k$, we define
\[\begin{split}
&g_k(\bfw_k,x_k, y_k)=\int f(\bw_k, x_k, y_k, \bz_k)\pi_k^{\bz}(\bz_k)d\bz_k, \\
&\eta_{k,i}(\bfw_k, y_k)= \int g_k(\bfw_k, x_k, y_k)\nu_{k,i}(x_k)dx_k,\\
&\theta_{k,i}(\bfw_k) = \int \eta_{k,i}(\bfw_k,y_k) \pi_{k+1}(y_k)dy_k, \mbox{ and }
\bar \theta_k(\bfw_k) = \sum_{i=1}^{I}p_{i}\theta_{k,i}(\bfw_k).
\end{split}\]
Note that with this notation
\[
\E_{k+1:K}f(\bX_{0:K})=\int g_k(\bW_k,X_k,y_k)\pi_{k+1}(y_k)dy_k,
\]
\[
\E_{k:K}f(\bX_{0:K})=\bar{\theta}_k(\bW_k). 
\]

Following similar lines of argument as \eqref{eq:main_decomp}, we have
\[\begin{split}
&\int \left(\int g_k(\bfw_k, x_k, y_k) \pi_{k+1}(y_k)dy_k - \bar \theta_k\right)^2 \pi_k(x_k)dx_k\\
\leq&\int \left(g_k(\bfw_k, x_k, y_k) - \bar \theta_k\right)^2 \pi_k(x_k)\pi_{k+1}(y_k)dx_kdy_k \\
\leq&3\sum_{i=1}^{I}p_{i}\underbrace{\int \left( g_k(\bfw_k, x_k, y_k)- \eta_{k,i}(\bfw_k, y_k)\right)^2\nu_{k,i}(x_k)dx_k\pi_{k+1}(y_k)dy_k}_{\mbox{(A)}}\\
&+3\sum_{i=1}^{I}p_{i} \underbrace{\int (\eta_{k,i}(\bfw_k, y_k) -  \theta_{k,i}(\bfw_k))^2 \pi_{k+1}(y_k)dy_k}_{\mbox{(B)}}\\
&+3\sum_{ij}p_{i}p_{j} \underbrace{( \theta_{k,i}(\bfw_k) - \theta_{k,j}(\bfw_k))^2}_{\mbox{(C)}}.
\end{split}\]
We note that part (A) and (B) are variance of functions in a single mixture component, so they are 
 easy to control using Proposition \ref{prop:bakry_lyap}.
Part (C), the mean difference, requires some further development, which we lay out in the next subsection.

\subsection{Mean difference estimates}
Define
\[
\Ex_k(\bx_{0:K}) = (f(\bw_k, x_k, y_k, \bz_k)-f(\bw_k, y_k, x_k, \bz_k))^2s_k(x_k, y_k)
\]
and 
\[
\Gamma_k(\bx_{0:K}) = \sum_{l=k}^{K}\left(\tau_l\|\nabla_{x_l} f(\bx_{0:K})\|^2 +  \rho\Ex_l(\bx_{0:K})\right).
\]
We also define
for $k=0,1,\dots, K-1$, let 
\[\begin{split}
\Xi_{x_k}& =28q_k a_{k+1},\\
\Xi_{y_k}&=28q_k 
+ 7\frac{\left(r_{k+1} \right)^{d+1}}{\left(r_k\right)^{d-1}}a_ka_{k+1}\left(\log\left(\frac{r_{k+1}}{r_k}\right)\right)^{1_{d=1}},\\
\Xi_{e_k}&=7\left(\frac{r_{k+1}}{r_k}\right)^da_ka_{k+1}.
\end{split}\]

\begin{proposition} \label{prop:mix2}
Under Assumptions \ref{aspt:mixture2} and \ref{aspt:bound2}, for $k\leq K-1$, 
\[
\sum_{i,j}p_{i}p_{j} \left(\theta_{k,i}(\bw_k) - \theta_{k,j}(\bw_k)\right)^2 \leq \Xi_k \E_{k:K} \left[\Gamma_k(\bw_k,\bX_{k:K})\right],
\]
where for any fixed $\alpha,\gamma>1$ with $\tfrac{1}{\alpha}+\tfrac{1}{\gamma}=1$, when $I=2$,
\[
\Xi_k=\max\left\{ \max_{k+1\leq l\leq K-1} \alpha^{l-k-1}\left(\frac{2\alpha\gamma\Xi_{x_l}}{\tau_{l}}+\frac{2\gamma\Xi_{y_{l-1}}}{\tau_l}\right), ~
\frac{2\gamma \Xi_{x_{k}}}{\tau_{k}}, ~ \max_{k\leq l\leq K-1} \alpha^{l-k} \frac{\gamma \Xi_{e_l}}{\rho}\right\}.
\] 
when $I>2$.
\[
\Xi_k=\max\left\{ \max_{k+1\leq l\leq K-1} (4\alpha)^{l-k-1}\left(\frac{8\alpha\gamma\Xi_{x_l}}{\tau_{l}}+\frac{2\gamma\Xi_{y_{l-1}}}{\tau_l}\right), ~
\frac{2\gamma \Xi_{x_{k}}}{\tau_{k}}, ~ \max_{k\leq l\leq K-1} (4\alpha)^{l-k} \frac{\gamma \Xi_{e_l}}{\rho}\right\}.
\] 
\end{proposition}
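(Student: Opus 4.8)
The plan is a downward induction on the level index $k$, from $k=K-1$ down to $k=0$, peeling off one replica-exchange interaction (between levels $k$ and $k+1$) at each step and feeding the remainder into the inductive hypothesis at level $k+1$. Throughout I keep $\bw_k$ fixed and introduce the auxiliary quantity
\[
\Theta_k(i,i')(\bw_k):=\int f(\bw_k,x_k,y_k,\bz_k)\,\nu_{k,i}(x_k)\nu_{k+1,i'}(y_k)\pi_{(k+2):K}(\bz_k)\,dx_k\,dy_k\,d\bz_k,
\]
so that, after expanding the mixture $\pi_{k+1}=\sum_{i'}p_{i'}\nu_{k+1,i'}$ inside $\theta_{k,i}$, one has $\theta_{k,i}(\bw_k)=\sum_{i'}p_{i'}\Theta_k(i,i')$. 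Because $\pi_K$ is \emph{not} a mixture, the index $k=K-1$ is the natural base case: there $\theta_{K-1,i}(\bw_{K-1})=\E_{\nu_{K-1,i}\otimes\pi_K}[f(\bw_{K-1},\cdot,\cdot)]$ (with $y_{K-1}=x_K$ and $g_{K-1}=f$), and the recursion simply terminates.

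For the base case I would apply Proposition~\ref{prop:swap} directly with $\nu_1^x=\nu_{K-1,i}$, $\nu_2^x=\nu_{K-1,j}$, and $\nu_1^y=\nu_2^y=\pi_K$: the second part of Assumption~\ref{aspt:mixture2} makes $\pi_K$ an $\Ly(r_{K,i},q_K,a_K)$-density with center $m_i$ for every $i$, and Assumption~\ref{aspt:bound2} supplies $r_{K,i}\le r_K$ and $r_{K,i}/r_{K-1,i}\le r_K/r_{K-1}$. With the substitutions $q\to q_{K-1}$, $a\to a_{K-1}$, $Q\to q_K$, $A\to a_K$, $r\to r_{K-1}$, $R\to r_K$, the three constants of Proposition~\ref{prop:swap} become exactly $\Xi_{x_{K-1}},\Xi_{y_{K-1}},\Xi_{e_{K-1}}$. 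Summing the resulting bound against $\sum_{ij}p_ip_j$, and using $\sum_{ij}p_ip_j(\nu_{K-1,i}(x)\pi_K(y)+\nu_{K-1,j}(x)\pi_K(y))=2\pi_{K-1}(x)\pi_K(y)$ together with the ``maximal coupling'' inequality $\sum_{ij}p_ip_j(\nu_{K-1,i}(x)\pi_K(y)\wedge\nu_{K-1,j}(y)\pi_K(x))\le\pi_{K-1}(x)\pi_K(y)\wedge\pi_{K-1}(y)\pi_K(x)=\pi_{K-1}(x)\pi_K(y)s_{K-1}(x,y)$, places every term on a slot of $\Gamma_{K-1}$ (the $\nabla_{x_K}$ part on the $\tau_K\|\nabla_{x_K}f\|^2$ slot, the swap part on the $\rho\,\Ex_{K-1}$ slot).

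For the inductive step I fix $k\le K-2$, assume the bound at level $k+1$, and insert the swap $\Theta_k(i,i')=\Theta_k(i',i)+\bigl(\Theta_k(i,i')-\Theta_k(i',i)\bigr)$ into $\theta_{k,i}=\sum_{i'}p_{i'}\Theta_k(i,i')$. Recognizing $\sum_{i'}p_{i'}\Theta_k(i',i)=\int\theta_{k+1,i}(\bw_k,x_k)\pi_k(x_k)\,dx_k=:\tilde\theta_{k+1,i}(\bw_k)$ yields $\theta_{k,i}-\theta_{k,j}=\bigl(\tilde\theta_{k+1,i}-\tilde\theta_{k+1,j}\bigr)+E_i-E_j$ with $E_i=\sum_{i'}p_{i'}\bigl(\Theta_k(i,i')-\Theta_k(i',i)\bigr)$, and a weighted Cauchy--Schwarz $(a+b)^2\le\alpha a^2+\gamma b^2$ (with $\tfrac1\alpha+\tfrac1\gamma=1$) separates the two. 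The inherited term is controlled by Jensen, $(\tilde\theta_{k+1,i}-\tilde\theta_{k+1,j})^2\le\int(\theta_{k+1,i}(\bw_k,x_k)-\theta_{k+1,j}(\bw_k,x_k))^2\pi_k(x_k)\,dx_k$, followed by the inductive hypothesis and the trivial inequality $\Gamma_{k+1}\le\Gamma_k$ — this is the mechanism that carries $\Xi_{k+1}$ into $\Xi_k$ with the geometric factor. The swap term is controlled by applying Proposition~\ref{prop:swap} to each pair $(i,i')$ at levels $k,k+1$ (hypotheses verified from the first part of Assumption~\ref{aspt:mixture2} and from Assumption~\ref{aspt:bound2}), using Jensen to strip the integration over $\bx_{(k+2):K}$ hidden inside $g_k$, then summing against $\sum_{ij}p_ip_j$ — and, when $I>2$, additionally against $\sum_{i'}p_{i'}$ via $(E_i-E_j)^2\le 4\sum_{i'}p_{i'}(\Theta_k(i,i')-\Theta_k(i',i))^2$-type bounds, which is exactly where the extra factor $4$ in $4\alpha$ enters — and applying the same two combining identities as in the base case. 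When $I=2$ the averaging over $i'$ disappears, $E_i-E_j$ collapses to the single genuine swap $\Theta_k(1,2)-\Theta_k(2,1)$, and the recursion constant improves to $\alpha$. Assembling the freshly produced coefficients $\tfrac{2\gamma\Xi_{x_k}}{\tau_k},\tfrac{2\gamma\Xi_{y_k}}{\tau_{k+1}},\tfrac{\gamma\Xi_{e_k}}{\rho}$ with the inherited ones and unrolling over $l=k,\dots,K-1$ produces the stated formula for $\Xi_k$.

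The step I expect to be the main obstacle is the bookkeeping that makes the recursion actually close: one must check that every term produced — whether peeled off at level $k$ or inherited from level $k+1$ — lands on a slot of $\Gamma_k$ (which contains only levels $\ge k$), is divided by the correct one of $\tau_k,\tau_{k+1},\rho$, and carries the claimed power of $\alpha$ (resp.\ $4\alpha$); getting the constants $2\alpha\gamma$ versus $8\alpha\gamma$ to line up requires keeping very careful track of how many Jensen/Cauchy--Schwarz splits each term has passed through. A secondary technical point is the ``maximal coupling density'' manipulation that replaces the $\nu^x_i\nu^y_{i'}\wedge\nu^x_{i'}\nu^y_i$ weights coming out of Proposition~\ref{prop:swap} by $\pi_k\pi_{k+1}s_k$, and the top-level accounting of the $\nabla_{x_K}$ contribution (divided by $\tau_K$), which must be shown to be dominated by the other terms in the maximum defining $\Xi_k$.
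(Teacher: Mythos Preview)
Your overall architecture---downward induction on $k$, base case $k=K-1$ handled by Proposition~\ref{prop:swap} (equivalently Corollary~\ref{cor:mean_difference}), and inductive step built on the auxiliary quantities $\Theta_k(i,i')=\zeta_{kii'}$---is exactly the paper's. For $I=2$ your two-term split $\theta_{k,1}-\theta_{k,2}=(\tilde\theta_{k+1,1}-\tilde\theta_{k+1,2})+(\Theta_k(1,2)-\Theta_k(2,1))$ is algebraically equivalent to the paper's three-term split (the paper writes $\theta_{k,1}-\theta_{k,2}=p_1(\zeta_{k11}-\zeta_{k12})+(\zeta_{k12}-\zeta_{k21})+p_2(\zeta_{k21}-\zeta_{k22})$ and uses Cauchy--Schwarz with weights $\alpha,\gamma,\alpha$, but after Jensen the inherited piece collapses to $\alpha\int 2p_1p_2(\theta_{k+1,1}-\theta_{k+1,2})^2\pi_k\,dx_k$, identical to yours).

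For $I>2$ the decompositions genuinely diverge. The paper inserts the single intermediate point $\zeta_{kij}$ and writes
\[
\theta_{k,i}-\theta_{k,j}=(\theta_{k,i}-\zeta_{kij})+(\zeta_{kij}-\zeta_{kji})+(\zeta_{kji}-\theta_{k,j}),
\]
applies Cauchy--Schwarz with weights $2\alpha,\gamma,2\alpha$, and then bounds both outer pieces by the inductive quantity via $(\theta_{k,i}-\zeta_{kij})^2\le\sum_h p_h(\zeta_{kih}-\zeta_{kij})^2\le\int S_{k+1}\,\nu_{k,i}\,dx_k$. This produces the recursion $S_k\le 4\alpha\int S_{k+1}\pi_k\,dx_k+\gamma\cdot[\text{swap at }k]$, whence the geometric factor $(4\alpha)^{l-k}$ in the stated $\Xi_k$. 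Your two-term split instead gives $S_k\le \alpha\int S_{k+1}\pi_k\,dx_k+\gamma\sum_{i,j}p_ip_j(E_i-E_j)^2$, and the bound $(E_i-E_j)^2\le 4\sum_{i'}p_{i'}(\ldots)^2$ multiplies the \emph{swap} term by $4$, not the recursion rate. So your unrolled constants would be $4\gamma\,\alpha^{l-k}\Xi_{e_l}/\rho$ rather than $\gamma\,(4\alpha)^{l-k}\Xi_{e_l}/\rho$; your remark ``this is exactly where the extra factor $4$ in $4\alpha$ enters'' misplaces the $4$. Your route is valid (indeed tighter for $l>k$), but it does not reproduce the precise $\Xi_k$ in the statement; to match the paper you would need its three-term split.
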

\begin{proof}
We prove the proposition by induction. 
\textbf{When $I=2$}, for any fixed $\bfw_k$, we want to show that
\begin{equation}\label{eq:induct2}
\begin{split}
&\sum_{i,j}p_{i}p_{j} \left(\theta_{k,i}(\bw_k) - \theta_{k,j}(\bw_k)\right)^2\\
=&2p_1p_2\left(\theta_{k,1}(\bw_k) - \theta_{k,2}(\bw_k)\right)^2 \\
\leq&\sum_{l=k+1}^{K-1} \alpha^{l-k-1}\left(\frac{2\alpha\gamma\Xi_{x_l}}{\tau_{l}}+\frac{2\gamma\Xi_{y_{l-1}}}{\tau_l}\right)\int \tau_l \|\nabla_{x_l}f(\bw_k, x_k, y_k, \bz_{k})\|^2\pi_{k}(x_k)\pi_{k+1}(y_{k})\pi_{k}^{\bz}(\bz_k)dx_kdy_{k}d\bz_k\\
&+ \frac{2\gamma\Xi_{x_{k}}}{\tau_{k}}\int \tau_{k} \|\nabla_{x_k}f(\bw_k, x_k, y_k, \bz_{k})\|^2\pi_{k}(x_k)\pi_{k+1}(y_{k})\pi_{k}^{\bz}(\bz_k)dx_kdy_{k}d\bz_k\\
&+\sum_{l=k}^{K-1} \alpha^{l-k} \frac{\gamma \Xi_{e_l}}{\rho} \rho \int \Ex_l(\bw_k, x_k, y_k, \bz_{k})\pi_{k}(x_k)\pi_{k+1}(y_{k})\pi_{k}^{\bz}(\bz_k)dx_kdy_{k}d\bz_k.
\end{split}\end{equation}
\noindent{\bf For $k=K-1$,} by Corollary \ref{cor:mean_difference}, with $\pi_k=\pi,\pi_{k+1}=\pi^y$, we have
\[\begin{split}
&2p_{1}p_{2} \left( \theta_{k,1}(\bw_k) - \theta_{k,2}(\bw_k)\right)^2 \\
\leq& 2\frac{\Xi_{x_k}}{\tau_k}\tau_k \int \|\nabla_{x_k} f(\bw_k,x_{k},y_k)\pi_{k}(x_{k})\|^2\pi_{k}(x_{k})\pi_{k+1}(y_k)dx_{k}dy_k\\
&+ 2\frac{\Xi_{y_k}}{\tau_{k+1}}\tau_{k+1} \int \|\nabla_{y_k} f(\bw_k,x_{k},y_k)\pi_{k}(x_{k})\|^2\pi_{k}(x_{k})\pi_{k+1}(y_k)dx_{k}dy_k\\
&+\frac{\Xi_{e_k}}{\rho}\rho\int \left(f(\bw_k,x_{k},y_k)-f(\bw_k,y_{k},x_k)\right)^2
\left(\pi_{k}(x_{k})\pi_{k+1}(y_k)\wedge \pi_{k}(y_{k})\pi_{k+1}(x_k)\right)dx_{k}dy_k
\end{split}\]
\noindent{\bf Suppose \eqref{eq:induct2} holds  when $k$ is replaced by $k+1$. Now, for $k$,} let
\[ \begin{split}
\zeta_{kij}(\bw_k)&=\int f(\bw_k, x_k, y_{k}, \bz_k)\nu_{k,i}(x_k)\nu_{k+1,j}(y_{k})\pi_{k}^{\bz}(\bz_k)dx_kdy_{k}d\bz_k\\
&=\int \theta_{k+1,i}(\bfw_k,x_k)\nu_{k,i}(x_k)dx_k.
\end{split}\]
Then, 
\begin{align}
\notag
&2p_1p_2({\theta}_{k,1}-{\theta}_{k,2})^2\\
\notag
=&2p_{1}p_2 [({\theta}_{k,1}-\zeta_{k12})+
(\zeta_{k12}-\zeta_{k21})+(\zeta_{k21}-{\theta}_{k,2})]^2\\
\notag
=&2p_{1}p_2 [p_1(\zeta_{k11}-\zeta_{k12})+
(\zeta_{k12}-\zeta_{k21})+p_2(\zeta_{k21}-\zeta_{k22})]^2\\
\notag
\leq& 2p_{1}p_2 [\alpha p_1(\zeta_{k11}-\zeta_{k12})^2+
\gamma (\zeta_{k21}-\zeta_{k12})^2+\alpha p_2(\zeta_{k22}-\zeta_{k21})^2][\frac{1}{\alpha}p_1+\frac{1}{\gamma}+\frac{1}{\alpha}p_2]\\
\label{tmp:dec2}
\leq& \underbrace{\alpha 2 p_1p_2(p_1(\zeta_{k11}-\zeta_{k12})^2+p_2(\zeta_{k21}-\zeta_{k22})^2)}_{\mbox{(a)}}+\underbrace{2p_1p_2\gamma (\zeta_{k21}-\zeta_{k12})^2}_{\mbox{(b)}}.
\end{align}

For part (a), by induction, 
\[\begin{split}
&\alpha 2 p_1p_2(p_1(\zeta_{k11}(\bfw_k)-\zeta_{k12}(\bfw_k))^2+p_2(\zeta_{k21}(\bfw_k)-\zeta_{k22}(\bfw_k))^2)\\
\leq& \alpha\sum_{i=1}^2 p_i  \int 2p_1p_2(\bar{\theta}_{k+1,j}(\bfw_k,x_k)-\bar{\theta}_{k+1,h}(\bfw_k,x_k))^2\nu_{k,i}(x_k)dx_k \\
\leq&\sum_{l=k+2}^{K-1} \alpha^{l-k-1}\left(\frac{2\alpha\gamma\Xi_{x_l}}{\tau_{l}}+\frac{2\gamma\Xi_{y_{l-1}}}{\tau_l}\right)
\int \tau_l \|\nabla_{x_l}f(\bw_k, x_k, y_k, \bz_{k})\|^2\pi_{k}(x_k)\pi_{k+1}(y_{k})\pi_{k}^{\bz}(\bz_k)dx_kdy_{k}d\bz_k\\
&+ \frac{2\alpha \gamma\Xi_{x_{k+1}}}{\tau_{k+1}}\int \tau_{k+1} \|\nabla_{y_k}f(\bw_k, x_k, y_k, \bz_{k})\|^2\pi_{k}(x_k)\pi_{k+1}(y_{k})\pi_{k}^{\bz}(\bz_k)dx_kdy_{k}d\bz_k\\
&+\sum_{l=k+1}^{K-1} \alpha^{l-k}\frac{\gamma \Xi_{e_l}}{\rho} \rho \int \Ex_l(\bw_k, x_k, y_k, \bz_{k})\pi_{k}(x_k)\pi_{k+1}(y_{k})\pi_{k}^{\bz}(\bz_k)dx_kdy_{k}d\bz_k.
\end{split}\]
For part (b), 
by the mean difference estimate in Proposition \ref{prop:swap}, we have
\[\begin{split}
&(\zeta_{k12}(\bw_k)-\zeta_{k21}(\bw_k))^2\\ 
\leq &\frac{\Xi_{x_k}}{\tau_k} \tau_k \int \|\nabla_{x_k}f(\bw_k, x_k, y_{k}, \bz_k)\|^2 (\nu_{k,1}\nu_{k+1,2}+\nu_{k,2}\nu_{k+1,1})(x_k,y_{k})\pi_{k}^{\bz}(\bz_k)dx_kdy_{k}d\bz_k\\
&+\frac{\Xi_{y_k}}{\tau_{k+1}} \tau_{k+1} \int \|\nabla_{y_k}f(\bw_k, x_k, y_{k}, \bz_k)\|^2 (\nu_{k,1}\nu_{k+1,2}+\nu_{k,2}\nu_{k+1,1})(x_k,y_{k})\pi_{k}^{\bz}(\bz_k)dx_kdy_{k}d\bz_k\\
&+\frac{\Xi_{e_k}}{\rho} \rho \int (f(\bw_k, x_k, y_{k}, \bz_k)-f(\bw_k, y_k, x_k, \bz_k))^2\cdot\\
&\quad\quad\quad \left(\nu_{k,1}(x_k)\nu_{k+1,2}(y_{k})\wedge \nu_{k,2}(y_k)\nu_{k+1,1}(x_{k})\right)\pi_{k}^{\bz}(\bz_k)dx_kdy_{k}d\bz_k.
\end{split}\]
Note that as
\[
2p_1p_2 (\nu_{k,1}(x_k)\nu_{k+1,2}(y_k)+\nu_{k,2}(x_k)\nu_{k+1,1}(y_k))\leq 2\pi_k(x_k)\pi_{k+1}(y_k),
\]
and because $a\wedge c+b\wedge d\leq (a+b)\wedge (c+d)$
\[\begin{split}
&2p_1p_2(\nu_{k,1}(x_k)\nu_{k+1,2}(y_k))\wedge(\nu_{k,2}(y_k)\nu_{k+1,1}(x_k))\\
=&(p_1\nu_{k,1}(x_k)p_2\nu_{k+1,2}(y_k))\wedge(p_2\nu_{k,2}(y_k)p_1\nu_{k+1,1}(x_k))\\
&+(p_2\nu_{k,2}(y_k)p_1\nu_{k+1,1}(x_k))\wedge(p_1\nu_{k,1}(x_k)p_2\nu_{k+1,2}(y_k))\\
\leq& (\pi_k(x_k)\pi_{k+1}(y_k))\wedge (\pi_k(y_k)\pi_{k+1}(x_k)),
\end{split}\]
we have
\[\begin{split}
&2p_1p_2 (\zeta_{k12}-\zeta_{k21})^2\\ 
\leq &2\frac{\Xi_{x_k}}{\tau_k} \tau_k \int \|\nabla_{x_k}f(\bw_k, x_k, y_{k}, \bz_k)\|^2 \pi_{k}(x_k)\pi_{k+1}(y_{k})\pi_{k}^{\bz}(\bz_k)dx_kdy_{k}d\bz_k\\
&+2\frac{\Xi_{y_k}}{\tau_{k+1}} \tau_{k+1} \int \|\nabla_{y_k}f(\bw_k, x_k, y_{k}, \bz_k)\|^2 \pi_{k}(x_k)\pi_{k+1}(y_{k})\pi_{k}^{\bz}(\bz_k)dx_kdy_{k}d\bz_k\\
&+\frac{\Xi_{e_k}}{\rho} \rho \int (f(\bw_k, x_k, y_{k}, \bz_k)-f(\bw_k, y_k, x_k, \bz_k))^2\cdot\\
&\quad\quad\quad \left(\pi_{k}(x_k)\pi_{k+1}(y_{k})\wedge \pi_{k}(y_k)\pi_{k+1}(x_{k})\right)\pi_{k}^{\bz}(\bz_k)dx_kdy_{k}d\bz_k.
\end{split}\]
Combining parts (a) and (b) we have
\[
\begin{split}
&2p_1p_2\left(\bar \theta_{k,1} - \bar \theta_{k,2}\right)^2 \\
\leq&\sum_{l=k+1}^{K-1} \alpha^{l-k-1}\left(\frac{2\alpha\gamma\Xi_{x_l}}{\tau_{l}}+\frac{2\gamma\Xi_{y_{l-1}}}{\tau_l}\right)\int \tau_l \|\nabla_{x_l}f(\bw_k, x_k, y_k, \bz_{k})\|^2\pi_{k}(x_k)\pi_{k+1}(y_{k})\pi_{k}^{\bz}(\bz_k)dx_kdy_{k}d\bz_k\\
&+ \frac{2\gamma\Xi_{x_{k}}}{\tau_{k}}\int \tau_{k} \|\nabla_{x_k}f(\bw_k, x_k, y_k, \bz_{k})\|^2\pi_{k}(x_k)\pi_{k+1}(y_{k})\pi_{k}^{\bz}(\bz_k)dx_kdy_{k}d\bz_k\\
&+\sum_{l=k}^{K-1} \alpha^{l-k} \frac{\gamma \Xi_{e_l}}{\rho} \rho \int \Ex_l(\bw_k, x_k, y_k, \bz_{k})\pi_{k}(x_k)\pi_{k+1}(y_{k})\pi_{k}^{\bz}(\bz_k)dx_kdy_{k}d\bz_k.
\end{split}
\]
Setting 
\[
\Xi_k=\gamma \max\left\{ \max_{k+1\leq l\leq K-1} \alpha^{l-k}\left(\frac{2\Xi_{x_l}}{\tau_{l}}+\frac{2\Xi_{y_{l-1}}}{\tau_l}\right), ~
\frac{2\Xi_{x_{k}}}{\tau_{k}}, ~ \max_{k\leq l\leq K-1} \alpha^{l-k-1} \frac{\Xi_{e_l}}{\rho}\right\},
\] 
we have the result.\\

\noindent{\bf When $I>2$,} using a similar induction argument, we want to show that for any fixed $\bfw_k$,
\begin{equation}\label{eq:induct}
\begin{split}
&\sum_{i,j}p_{i}p_{j}\left(\theta_{k,i}(\bw_k) - \theta_{k,j}(\bw_k)\right)^2 \\
\leq&\sum_{l=k+1}^{K-1} (4\alpha)^{l-k-1}\left(\frac{8\alpha\gamma\Xi_{x_l}}{\tau_{l}}+\frac{2\gamma\Xi_{y_{l-1}}}{\tau_l}\right)\cdot\\
&\int \tau_l \|\nabla_{x_l}f(\bw_k, x_k, y_k, \bz_{k})\|^2\pi_{k}(x_k)\pi_{k+1}(y_{k})\pi_{k}^{\bz}(\bz_k)dx_kdy_{k}d\bz_k\\
&+ \frac{2\gamma\Xi_{x_{k}}}{\tau_{k}}\int \tau_{k} \|\nabla_{x_k}f(\bw_k, x_k, y_k, \bz_{k})\|^2\pi_{k}(x_k)\pi_{k+1}(y_{k})\pi_{k}^{\bz}(\bz_k)dx_kdy_{k}d\bz_k\\
&+\sum_{l=k}^{K-1} (4\alpha)^{l-k} \frac{\gamma Xi_{e_l}}{\rho} \rho \int \Ex_l(\bw_k, x_k, y_k, \bz_{k})\pi_{k}(x_k)\pi_{k+1}(y_{k})\pi_{k}^{\bz}(\bz_k)dx_kdy_{k}d\bz_k.
\end{split}\end{equation}
\noindent{\bf For $k=K-1$,} by Corollary \ref{cor:mean_difference}, with $\pi=\pi_{K-1},\pi^y=\pi_K$, we have
\[\begin{split}
&\sum_{i,j}p_{i}p_{j} \left(\theta_{k,i}(\bw_k) - \theta_{k,j}(\bw_k)\right)^2 \\
\leq& 2\frac{\Xi_{x_k}}{\tau_k}\tau_k \int \|\nabla_{x_k} f(\bw_k,x_{k},y_k)\pi_{k}(x_{k})\|^2\pi_{k}(x_{k})\pi_{k+1}(y_k)dx_{k}dy_k\\
&+ 2\frac{\Xi_{y_k}}{\tau_{k+1}}\tau_{k+1} \int \|\nabla_{y_k} f(\bw_k,x_{k},y_k)\pi_{k}(x_{k})\|^2\pi_{k}(x_{k})\pi_{k+1}(y_k)dx_{k}dy_k\\
&+\frac{\Xi_{e_k}}{\rho}\rho\int \left(f(\bw_k,x_{k},y_k)-f(\bw_k,y_{k},x_k)\right)^2
\left(\pi_{k}(x_{k})\pi_{k+1}(y_k)\wedge \pi_{k}(y_{k})\pi_{k+1}(x_k)\right)dx_{k}dy_k.
\end{split}\]
\noindent{\bf Suppose \eqref{eq:induct} holds  when $k$ is replaced by $k+1$. Now, for $k$,} 
We first note that $\theta_{k,i}=\sum_{h=1}^{I}p_{k+1,h}\zeta_{kih}$ and
\begin{equation}
\label{tmp:decI}
\begin{split}
&\sum_{i,j}p_{i}p_{j}(\theta_{k,i} - \theta_{k,j})^2\\ 
=&\sum_{i,j}p_{i}p_{j} \left((\theta_{k,i}-\zeta_{kij}) + (\zeta_{kij}-\zeta_{kji})+(\zeta_{kji}-\theta_{k,j})\right)^2\\
\leq& 2\alpha\sum_{i,j}p_{i}p_{j}\underbrace{(\theta_{k,i}-\zeta_{kij})^2}_{\mbox{(a)}} + \gamma\sum_{i,j}p_{i}p_{j}\underbrace{(\zeta_{kij}-\zeta_{kji})^2}_{\mbox{(b)}} + 2\alpha\sum_{i,j}p_{i}p_{j}\underbrace{(\zeta_{kji}-\theta_{k,j})^2}_{\mbox{(c)}}.
\end{split}
\end{equation}
Note that part (a) and (c) are symmetric. For part (a), we have
\[\begin{split}
\sum_{i,j}p_{i}p_{j}(\zeta_{kij}-\theta_{k,i})^2=\sum_{i,j}p_{i}p_{j}\left(\sum_{h=1}^I p_{h} (\zeta_{kij}-\zeta_{kih})\right)^2
\leq \sum_{i, j,h} p_i p_{j} p_{h}\left(\zeta_{kij}-\zeta_{kih}\right)^2.
\end{split}
\]
By induction, 
\[\begin{split}
&\sum_i p_i \sum_{j,h}p_{j}p_{h} \left(\zeta_{kij}(\bw_k)-\zeta_{kih}(\bw_k)\right)^2\\
\leq&\sum_i p_i \sum_{j,h}p_{j}p_{h} \int (\theta_{k+1,j}(\bfw_k,x_k)-\theta_{k+1,h}(\bfw_k,x_k))^2\nu_{k,i}(x_k)dx_k \\
\leq&\sum_{l=k+2}^{K-1} (4\alpha)^{l-k-2}\left(\frac{8\alpha\gamma \Xi_{x_l}}{\tau_{l}}+\frac{2\gamma\Xi_{y_{l-1}}}{\tau_l}\right)\cdot\\
&\int \tau_l \|\nabla_{x_l}f(\bw_k, x_k, y_k, \bz_{k})\|^2\pi_{k}(x_k)\pi_{k+1}(y_{k})\pi_{k}^{\bz}(\bz_k)dx_kdy_{k}d\bz_k\\
&+ \frac{2\gamma\Xi_{x_{k+1}}}{\tau_{k+1}}\int \tau_{k+1} \|\nabla_{y_k}f(\bw_k, x_k, y_k, \bz_{k})\|^2\pi_{k}(x_k)\pi_{k+1}(y_{k})\pi_{k}^{\bz}(\bz_k)dx_kdy_{k}d\bz_k\\
&+\sum_{l=k+1}^{K-1} (4\alpha)^{l-k-1}\frac{\gamma \Xi_{e_l}}{\rho} \rho \int \Ex_l(\bw_k, x_k, y_k, \bz_{k})\pi_{k}(x_k)\pi_{k+1}(y_{k})\pi_{k}^{\bz}(\bz_k)dx_kdy_{k}d\bz_k.
\end{split}\]
For part (b), recall that 
\[ 
\zeta_{kij}=\int f(\bw_k, x_k, y_{k}, \bz_k)\nu_{k,i}(x_k)\nu_{k+1,j}(y_{k})\pi_{k}^{\bz}(\bz_k)dx_kdy_{k}d\bz_k
\]
and
\[ 
\zeta_{kji}=\int f(\bw_k, x_k, y_{k}, \bz_k)\nu_{k,j}(x_k)\nu_{k+1,i}(y_{k})\pi_{k}^{\bz}(\bz_k)dx_kdy_{k}d\bz_k.
\] 
By the mean difference estimate in Proposition \ref{prop:swap}, we have
\[\begin{split}
&(\zeta_{kij}(\bw_k)-\zeta_{kji}(\bw_k))^2\\ 
\leq &\frac{\Xi_{x_k}}{\tau_k} \tau_k \int \|\nabla_{x_k}f(\bw_k, x_k, y_{k}, \bz_k)\|^2 (\nu_{k,i}\nu_{k+1,j}+\nu_{k,j}\nu_{k+1,i})(x_k,y_{k})\pi_{k}^{\bz}(\bz_k)dx_kdy_{k}d\bz_k\\
&+\frac{\Xi_{y_k}}{\tau_{k+1}} \tau_{k+1} \int \|\nabla_{y_k}f(\bw_k, x_k, y_{k}, \bz_k)\|^2 (\nu_{k,i}\nu_{k+1,j}+\nu_{k,j}\nu_{k+1,i})(x_k,y_{k})\pi_{k}^{\bz}(\bz_k)dx_kdy_{k}d\bz_k\\
&+\frac{\Xi_{e_k}}{\rho} \rho \int (f(\bw_k, x_k, y_{k}, \bz_k)-f(\bw_k, y_k, x_k, \bz_k))^2\cdot\\
&\quad\quad\quad \left(\nu_{k,i}(x_k)\nu_{k+1,j}(y_{k})\wedge \nu_{k,j}(y_k)\nu_{k+1,i}(x_{k})\right)\pi_{k}^{\bz}(\bz_k)dx_kdy_{k}d\bz_k.
\end{split}\]
Note that 
\[
\sum_{i,j} p_ip_j\nu_{k,i}(x_k)\nu_{k+1,j}(y_k)=\pi_k(x_k)\pi_{k+1}(y_k),
\]
and because $a\wedge c+b\wedge d\leq (a+b)\wedge (c+d)$, 
\[
\sum_{i,j} (p_ip_j\nu_{k,i}(x_k)\nu_{k+1,j}(y_k))\wedge(p_ip_j\nu_{k,j}(y_k)\nu_{k+1,i}(x_k))\leq (\pi_k(x_k)\pi_{k+1}(y_k))\wedge (\pi_k(y_k)\pi_{k+1}(x_k)).
\]
Thus,
\[\begin{split}
&\sum_{i,j}p_{i}p_{j} (\zeta_{kij}(\bw_k)-\zeta_{kji}(\bw_k))^2\\ 
\leq &2\frac{\Xi_{x_k}}{\tau_k} \tau_k \int \|\nabla_{x_k}f(\bw_k, x_k, y_{k}, \bz_k)\|^2 \pi_{k}(x_k)\pi_{k+1}(y_{k})\pi_{k}^{\bz}(\bz_k)dx_kdy_{k}d\bz_k\\
&+2\frac{\Xi_{y_k}}{\tau_{k+1}} \tau_{k+1} \int \|\nabla_{y_k}f(\bw_k, x_k, y_{k}, \bz_k)\|^2 \pi_{k}(x_k)\pi_{k+1}(y_{k})\pi_{k}^{\bz}(\bz_k)dx_kdy_{k}d\bz_k\\
&+\frac{\Xi_{e_k}}{\rho} \rho \int (f(\bw_k, x_k, y_{k}, \bz_k)-f(\bw_k, y_k, x_k, \bz_k))^2\\
&\quad\quad\quad \left(\pi_{k}(x_k)\pi_{k+1}(y_{k})\wedge \pi_{k}(y_k)\pi_{k+1}(x_{k})\right)\pi_{k}^{\bz}(\bz_k)dx_kdy_{k}d\bz_k.
\end{split}\]
Combining parts (a), (b), and (c), we have
\[\begin{split}
&\sum_{i,j}p_{i}p_{j}\left(\theta_{i,k}(\bw_k) - \theta_{j, k}(\bw_k)\right)^2 \\
\leq&\sum_{l=k+1}^{K-1} (4\alpha)^{l-k-1}\left(\frac{8\alpha\gamma\Xi_{x_l}}{\tau_{l}}+\frac{2\gamma\Xi_{y_{l-1}}}{\tau_l}\right)\cdot\\
&\int \tau_l \|\nabla_{x_l}f(\bw_k, x_k, y_k, \bz_{k})\|^2\pi_{k}(x_k)\pi_{k+1}(y_{k})\pi_{k}^{\bz}(\bz_k)dx_kdy_{k}d\bz_k\\
&+ \frac{2\gamma\Xi_{x_{k}}}{\tau_{k}}\int \tau_{k} \|\nabla_{x_k}f(\bw_k, x_k, y_k, \bz_{k})\|^2\pi_{k}(x_k)\pi_{k+1}(y_{k})\pi_{k}^{\bz}(\bz_k)dx_kdy_{k}d\bz_k\\
&+\sum_{l=k}^{K-1} (4\alpha)^{l-k} \frac{\gamma\Xi_{e_l}}{\rho} \rho \int \Ex_l(\bw_k, x_k, y_k, \bz_{k})\pi_{k}(x_k)\pi_{k+1}(y_{k})\pi_{k}^{\bz}(\bz_k)dx_kdy_{k}d\bz_k.
\end{split}\]
Setting 
\[
\Xi_k=\max\left\{ \max_{k+1\leq l\leq K-1} (4\alpha)^{l-k-1}\left(\frac{8\alpha\gamma\Xi_{x_l}}{\tau_{l}}+\frac{2\gamma\Xi_{y_{l-1}}}{\tau_l}\right), ~
\frac{2\gamma \Xi_{x_{k}}}{\tau_{k}}, ~ \max_{k\leq l\leq K-1} (4\alpha)^{l-k} \frac{\gamma \Xi_{e_l}}{\rho}\right\},
\] 
we have the result.
\end{proof}

\subsection{Proof of Theorem \ref{th:main2}}
Recall that 
\[\begin{split}
&\E_k\left[\left(\E_{(k+1):K} f(\bw_k, \bX_{k:K})- \E_{k:K} f(\bw_k, \bX_{k:K})\right)^2\right]\\
\leq&3\sum_{i=1}^{I}p_{i}\underbrace{\int \left( g_k(\bw_k, x_k, y_k)- \eta_{k,i}(\bw_k, y_k)\right)^2\nu_{k,i}(x_k)dx_k\pi_{k+1}(y_k)dy_k}_{\mbox{(A)}}\\
&+3\sum_{i=1}^{I}p_{i} \underbrace{\int (\eta_{k,i}(\bw_k, y_k) -\theta_{k,i}(\bw_k))^2 \pi_{k+1}(y_k)dy_k}_{\mbox{(B)}}\\
&+3\sum_{ij}p_{i}p_{j} \underbrace{(\theta_{k,i}(\bw_k) - \theta_{k,j}(\bw_k))^2}_{\mbox{(C)}}.
\end{split}\]

\noindent{\bf For part (A),} 
\[\begin{split}
&\sum_{i=1}^{I}p_{i}\int \left( g_k(\bw_k, x_k, y_k)- \eta_{k,i}(\bw_k, y_k)\right)^2\nu_{k,i}(x_k)dx_k\pi_{k+1}(y_k)dy_k\\
\leq& \sum_{i=1}^{I}p_{i} q_k\int \|\nabla_{x_k} g_k(\bw_k, x_k,y_k)\|^2\nu_{k,i}(x_k)\pi_{k+1}(y_k)dx_kdy_k\\
\leq& q_k \int \|\nabla_{x_k} g_k(\bw_k, x_k,y_k)\|^2\pi_{k}(x_k)\pi_{k+1}(y_k)dx_kdy_k.
\end{split}\]
\noindent{\bf For part (B),} 
\[\begin{split}
&\sum_{i=1}^{I}p_{i}\int (\eta_{k,i}(\bw_k, y_k) - \theta_{k,i}(\bw_k))^2 \pi_{k+1}(y_k)dy_k\\
\leq& \sum_{i=1}^{I}p_{i} q_{k+1}\int \|\nabla_{y_k} g_k(\bw_k, x_k,y_k)\|^2\nu_{k,i}(x_k)\pi_{k+1}(y_k)dx_kdy_k\\
\leq& q_{k+1} \int \|\nabla_{y_k} g_k(\bw_k, x_k,y_k)\|^2\pi_{k}(x_k)\pi_{k+1}(y_k)dx_kdy_k.
\end{split}\]
\noindent{\bf For part (C),} from Proposition \ref{prop:mix2}, 
\[\begin{split}
&\sum_{i,j}p_{i}p_{j}\left(\theta_{i,k}(\bw_k) - \theta_{j,k}(\bw_k)\right)^2 \\
\leq&\sum_{l=k+1}^{K-1} (4\alpha)^{l-k-1}\left(\frac{8\alpha\gamma \Xi_{x_l}}{\tau_{l}}+\frac{2\gamma\Xi_{y_{l-1}}}{\tau_l}\right)\cdot\\
&\int \tau_l \|\nabla_{x_l}f(\bw_k, x_k, y_k, \bz_{k})\|^2\pi_{k}(x_k)\pi_{k+1}(y_{k})\pi_{k}^{\bz}(\bz_k)dx_kdy_{k}d\bz_k\\
&+ \frac{2\gamma \Xi_{x_{k}}}{\tau_{k}}\int \tau_{k} \|\nabla_{x_k}f(\bw_k, x_k, y_k, \bz_{k})\|^2\pi_{k}(x_k)\pi_{k+1}(y_{k})\pi_{k}^{\bz}(\bz_k)dx_kdy_{k}d\bz_k\\
&+\sum_{l=k}^{K-1} (4\alpha)^{l-k} \frac{\gamma\Xi_{e_l}}{\rho} \rho \int \Ex_l(\bw_k, x_k, y_k, \bz_{k})\pi_{k}(x_k)\pi_{k+1}(y_{k})\pi_{k}^{\bz}(\bz_k)dx_kdy_{k}d\bz_k.
\end{split}\]
Putting parts (A), (B), and (C) together, we have
\[\begin{split}
&\E_{0:K}\left[\E_k\left[\left(\E_{(k+1):K} f(\bX_{0:K})- \E_{k:K} f(\bX_{0:K})\right)^2\right]\right]\\
\leq&\sum_{l=k+2}^{K-1} \frac{3(4\alpha)^{l-k-1}}{\tau_l}\left(8\alpha\gamma\Xi_{x_l}+2\gamma\Xi_{y_{l-1}}\right)\int \tau_l \|\nabla_{x_l}f(\bx_{0:K})\|^2\pi_{0:K}(\bx_{0:K})d\bx_{0:K}\\
&+\frac{3}{\tau_{k+1}}\left(8\alpha\gamma\Xi_{x_{k+1}}+2\gamma\Xi_{y_{k}}+q_{k+1}\right)\int\|\nabla_{x_{k+1}}f(\bx_{0:K})\|^2\pi_{0:K}(\bx_{0:K})d\bx_{0:K}\\
&+ \frac{3}{\tau_k}\left(2\gamma \Xi_{x_{k}} + q_k\right)\int \tau_{k} \|\nabla_{x_k}f(\bx_{0:K})\|^2\pi_{0:K}(\bx_{0:K})d\bx_{0:K}\\
&+\sum_{l=k}^{K-1} 3(4\alpha)^{l-k} \frac{\gamma \Xi_{e_l}}{\rho} \rho \int \Ex_l(\bx_{0:K})\pi_{0:K}(\bx_{0:K})d\bx_{0:K}.
\end{split}\]
Then
\[\begin{split}
&\E_{0:K}\left[\left(f(\bX_{0:K}) - \E f(\bX_{0:K})\right)^2\right]\\
\leq& \sum_{k=0}^{K} \E_{0:K}\left[\E_k\left[\left(\E_{(k+1):K} f(\bX_{0:K})- \E_{k:K} f(\bX_{0:K})\right)^2\right]\right]\\
\leq& \kappa \E_{0:K}[\Gamma_R(f(\bX_{0:K}))],
\end{split}\]
where
\[\begin{split}
\kappa=&\max_{0\leq k\leq K-1}\max\left\{\sum_{h=2}^{k-2}\frac{3(4\alpha)^{k-h+1}}{\tau_k}\left(8\alpha\gamma\Xi_{x_k}+2\gamma\Xi_{y_{k-1}}\right)
+\frac{3}{\tau_{k}}\left((8\alpha\gamma+2\gamma)\Xi_{x_{k}}+2\gamma\Xi_{y_{k-1}}+2q_k\right),\right.\\
&\left.\sum_{h=0}^{k}\frac{3(4\alpha)^{k-h+2}}{\rho}\gamma \Xi_{e_k}\right\}.
\end{split}\]
When $I=2$, we can further improve $\kappa$ to
\[\begin{split}
\kappa=&\max_{0\leq k\leq K-1}\max\left\{\sum_{h=2}^{k-2}\frac{3\alpha^{k-h+1}}{\tau_k}\left(2\alpha\gamma\Xi_{x_k}+2\gamma\Xi_{y_{k-1}}\right)
+\frac{3}{\tau_{k}}\left((2\alpha\gamma+2\gamma)\Xi_{x_{k}}+2\gamma\Xi_{y_{k-1}}+2q_k\right),\right.\\
&\left.\sum_{h=0}^{k}\frac{3\alpha^{k-h+2}}{\rho}\gamma \Xi_{e_k}\right\}.
\end{split}\]

Recall that
\[\begin{split}
\Xi_{x_k}& =28q_ka_{k+1},\\
\Xi_{y_k}&=28q_{k+1} 
+ 7\frac{\left(r_{k+1}\right)^{d+1}} {\left(r_k\right)^{d-1}}a_ka_{k+1}\left(\log\left(\frac{r_{k+1}}{r_k}\right)\right)^{1_{d=1}},\\
\Xi_{e_k}&=7\left(\frac{r_{k+1}}{r_k}\right)^da_ka_{k+1}.
\end{split}\]
\section{Conclusion and future directions}
Langevin diffusion (LD) is a popular sampling method, but its convergence rate can be significantly reduced if the target distribution is a mixture of singular densities. Replica exchange Langevin diffusion (ReLD) is a method that can circumvent this issue. It employs an additional LD process sampling a high temperature version of the target distribution, and swap the values of the two processes according to a Metropolis-Hasting type of law. More generally, multiple replica exchange (mReLD) employs $K$ additional LD processes sampling the target distribution with different temperature coefficients. In this work, we formulate a framework to quantify the spectral gap of ReLD and mReLD. Our analysis show that the spectral gap of ReLD does not degenerate when the mixture component becomes singular, as long as the simulation parameters of ReLD scale proportionally to the singularity parameter $\epsilon^d$. While using mReLD can achieve the same convergence rate, the simulation parameters have a weaker dependence on the singularity.

While our results close some theoretical gaps for ReLD and mReLD, there are several questions left unanswered. First, ReLD and mReLD are stochastic processes, but not executable sampling algorithms. How to turn them into efficient MCMC algorithms is an interesting research question. While we roughly estimate the computational complexity of ReLD and mReLD with the appropriate simulation parameters, the exact details of their implementation cost are not fully understood. Second, our spectral gap estimate for mReLD is intuitively not tight for when the component size $I$ and the number of replicas $K$ are large. This is much due to the analysis techniques we used. It will be a very interesting future work if these issues can be fixed.

\section*{Acknowledgement}
The research of JD is supported by National Science Foundation, Grant DMS-1720433.
The research of XTT is supported by the National University of Singapore grant R-146-000-292-114.

\begin{appendix}
\section{Proof of Proposition \ref{prop:eg1}}
\begin{proof}
\textbf{Claim 1)}
Let 
\[
\E_{k}[f(X)]:=\E[f(X)|X_1, \dots, X_{k-1}, X_{k+1}, \dots, X_d]
\]
and
\[
\var_{k}(f(X))=\E_k[(f(X)-\E_{k}f(X))^2].
\]
We also define 
\[
\E_{1:k}[f(X)]=\E[f(X)|X_{k+1}, \dots, X_d].
\]
Then 
\[
f(X)-\E[f(X)] = \sum_{k=1}^{d} \left(\E_{1:(k-1)}[f(X)]-\E_{1:k}[f(X)]\right),
\]
where $\E_{1:(-1)}[f(X)]:= f(X)$. Note that if $j<k$, then 
\[\begin{split}
&\E\left[(\E_{1:(j-1)}f(X)-\E_{1:j}f(X))(\E_{1:(k-1)}f(X)-\E_{1:k}f(X))\right]\\
=&\E\left[\E_{1:j}\left[(\E_{1:(j-1)}f(X)-\E_{1:j}f(X))(\E_{1:(k-1)}f(X)-\E_{1:k}f(X))\right]\right]\\
=&\E\left[(\E_{1:(k-1)}f(X)-\E_{1:k}f(X)) \E_{1:j}\left[\E_{1:(j-1)}f(X)-\E_{1:j}f(X)\right]\right]=0.
\end{split}\]
Therefore,
\begin{equation}\label{eq:Stein}
\begin{split}
\E[(f(X)-\E f(X))^2] &= \sum_{k=1}^{d} \E\left[\left(\E_{1:(k-1)}f(X)-\E_{1:k}f(X)\right)^2\right]\\
& =  \sum_{k=1}^{d} \E\left[\left(\E_{1:(k-1)}\left[(f(X)-\E_{k}[f(X)]\right]\right)^2\right]\\
&\mbox{as $\E_{1:(k-1)}[\E_kf(X)]=\E_{1:k} f(X)$ by independence}\\
&\leq \sum_{k=1}^{d} \E\left[\E_{1:(k-1)}\left[\left(f(X)-\E_{k} f(X)\right)^2\right]\right]\\
& = \sum_{k=1}^{d} \E\left[\var_k(f(X))\right]
\end{split}\end{equation}

Next, for a one-dimensional Gaussian random variable, we have
\[
\int_{x}^{\infty} y\frac{1}{\sqrt{2\pi\epsilon^2}}\exp\left(-\frac{y^2}{2\epsilon^2}\right)dy
= \epsilon^2 \frac{1}{\sqrt{2\pi\epsilon^2}} \exp\left(-\frac{x^2}{2\epsilon^2}\right)
\] 
Then, by Lemma \ref{lm:univar} with $p(x)=\tfrac{1}{\sqrt{2\pi\epsilon^2}} \exp(-\tfrac{x^2}{2\epsilon^2})$ and $Q(x)=|x|$, we have
\[
\var_k(f(X))\leq \epsilon^2\E_k[|\nabla_{x_k} f(X)|^2]
\]
From \eqref{eq:Stein}, we have
\[\begin{split}
\E[(f(X)-\E[f(X)])^2] &\leq \sum_{k=1}^{d} \E\left[\var_k(f(X))\right]\\
&\leq \sum_{k=1}^{d} \E\left[\epsilon^2\E_k[ |\nabla_{x_k} f(X)|^2 ]\right]\\
&= \epsilon^2\E\left[\sum_{k=1}^{d}|\nabla_{x_k} f(X)|^2\right]
=\epsilon^2\E\left[\|\nabla f(X)\|^2\right]
\end{split}\]

\noindent\textbf{Claim 2)}
Let $\nu(x) \propto \phi(x/\epsilon)$. Then, because $\nabla \phi(x/\epsilon)=-\frac1{\epsilon^2} \phi(x/\epsilon) x$, 
\[\begin{split}
&\calE\left(\frac{\nu}{\pi}\right)\\
=& 4\int \left\|\nabla \frac{\phi(x/\epsilon)}{\phi(x/\epsilon)+\phi((x-m)/\epsilon)} \right\|^2\pi(x)dx\\
=& \frac{4}{\epsilon^4}\int \left\| \frac{\phi(x/\epsilon)x(\phi(x/\epsilon)+\phi((x-m)/\epsilon))-\phi(x/\epsilon)(\phi(x/\epsilon)x+\phi((x-m)/\epsilon)(x-m))}{(\phi(x/\epsilon)+\phi((x-m)/\epsilon))^2} \right\|^2\pi(x)dx\\
=& \frac{4\|m\|^2}{\epsilon^4}\int \left|\frac{\phi(x/\epsilon)\phi((x-m)/\epsilon)}{(\phi(x/\epsilon)+\phi((x-m)/\epsilon))^2} \right|^2\pi(x)dx\\
=&\frac{4\|m\|^2}{\epsilon^4}\int \left|\frac{1}{r(x) + \tfrac{1}{r(x)} + 2} \right|^2\pi(x)dx ~~~\mbox{ where $r(x)=\phi((x-m)/\epsilon)/\phi(x/\epsilon)$}\\
=&\frac{4\|m\|^2}{\epsilon^4}\left(\int_{\{\|x\|^2\leq\|m\|^2/16\} \bigcup\{\|x-m\|^2\leq\|m\|^2/16\}}  \left|\frac{1}{r(x) + \tfrac{1}{r(x)} + 2} \right|^2\pi(x)dx\right.\\
&\left.+\int_{\{\|x\|^2>\|m\|^2/16\}\bigcap\{\|x-m\|^2>\|m\|^2/16\}}  \left|\frac{1}{r(x) + \tfrac{1}{r(x)} + 2} \right|^2\pi(x)dx\right).
\end{split}\]
We first note that when $\|x\|^2\leq\|m\|^2/16$, 
\begin{equation} \label{eq:r_bound1}
r(x)=\exp\left(-\frac{\|x-m\|^2-\|x\|^2}{2\epsilon^2}\right) \leq \exp\left(-\frac{\tfrac{9}{16}\|m\|^2 - \tfrac{1}{16}\|m\|^2}{2\epsilon^2}\right) = \exp\left(-\frac{\|m\|^2}{4\epsilon^2}\right).
\end{equation}
Likewise, we can show that when $\|x-m\|^2\leq\|m\|^2/16$, $1/r(x) \leq \exp\bigl(-\tfrac{\|m\|^2}{4\epsilon^2}\bigr)$.
Thus,
\begin{equation} \label{eq:prop1_bound1}
\int_{\{\|x\|^2\leq\|m\|^2/16\} \bigcup\{\|x-m\|^2\leq\|m\|^2/16\}}  \left|\frac{1}{r(x) + \frac{1}{r(x)} + 2} \right|^2\pi(x)dx
\leq \exp\left(-\frac{\|m\|^2}{4\epsilon^2}\right).
\end{equation}
Next, we note that the following always hold:
\[\left|\frac{1}{r(x) + \tfrac{1}{r(x)} + 2} \right|^2\leq \frac{1}{16}.\]
Therefore
\begin{equation} \label{eq:prop1_bound2}
\begin{split}
&\int_{\{\|x\|^2>\|m\|^2/16\}\bigcap\{\|x-m\|^2>\|m\|^2/16\}}  \left|\frac{1}{r(x) + \frac{1}{r(x)} + 2} \right|^2\pi(x)dx\\
\leq & \frac{1}{16}\int_{\{\|x\|^2>\|m\|^2/16\}\bigcap\{\|x-m\|^2>\|m\|^2/16\}} \pi(x)dx\\
 \leq &\frac{1}{16} \left(\int_{\left\{\|z\|^2>\frac{\|m\|^2}{16\epsilon^2}\right\}} \phi(z)dz
 + \int_{\left\{\|z-m\|^2>\frac{\|m\|^2}{16\epsilon^2}\right\}}\phi(z-m)dz\right)\\
 \leq &\frac{1}{16} 2\exp\left(- \frac{d}{2}\left(\frac{\|m\|^2}{16d\epsilon^2} - \frac{1}{2} - \log\left(\frac{\|m\|^2}{8d\epsilon^2}\right)\right)\right) \mbox{ by Cramer's bound}\\
\leq& \frac{1}{8} \exp\left(-\frac{1}{64}\frac{\|m\|^2}{\epsilon^2}\right) \mbox{ for $\epsilon\leq \frac{\|m\|}{16\sqrt{d}}$}.
\end{split}
\end{equation}
The last inequality holds because  when $\epsilon\leq \frac{\|m\|}{16\sqrt{d}}$,
\[
\frac{\|m\|^2}{8d\epsilon^2} \geq 32,\mbox{ and } \log\left(\frac{\|m\|^2}{8d\epsilon^2}\right)< \frac{\|m\|^2}{32d\epsilon^2}.
\]
Putting \eqref{eq:prop1_bound1} and \eqref{eq:prop1_bound2} together, we have
\[
\calE\left(\frac{\nu}{\pi}\right) \leq \frac{4\|m\|^2}{\epsilon^4}\left(\exp\left(-\frac{\|m\|^2}{4\epsilon^2}\right) + \frac{1}{8} \exp\left(-\frac{1}{64}\frac{\|m\|^2}{\epsilon^2}\right)\right)
\leq \frac{5\|m\|^2}{\epsilon^4}\exp\left(-\frac{\|m\|^2}{64\epsilon^2}\right).
\]
On the other hand, 
\[
\begin{split}
\chi^2(\nu\|\pi)&=\int \left(\frac{\nu(x)}{\pi(x)}-1\right)^2\pi(x)dx\\
&\geq\left(\int \left|\frac{\nu(x)}{\pi(x)}-1\right|\pi(x)dx\right)^2 \\
&= \frac{1}{4}\frac{1}{\epsilon^{2d}}\left(\int\left|\phi(x/\epsilon) - \phi((x-m)/\epsilon))\right|dx\right)^2 \\
&=\frac{1}{4}\frac{1}{\epsilon^{2d}}\left(\int\left|1 - r(x)\right|\phi(x/\epsilon)dx\right)^2\\
&\geq \frac{1}{4}\frac{1}{\epsilon^{2d}} \left(\int_{\{\|x\|^2\leq \|m\|^2/16\}}\left|1 - \exp\left(-\frac{\|m\|^2}{4\epsilon^2}\right)\right|\phi(x/\epsilon)dx\right)^2
\mbox{ by \eqref{eq:r_bound1}}\\
&\geq \frac{1}{8} \left(\int_{\left\{\|z\|^2\leq \frac{\|m\|^2}{16\epsilon^2}\right\}}\phi(z)dz\right)^2\mbox{ by replacing $x/\epsilon$ with $z$}\\ 
&=\frac{1}{8} \left(1-\int_{\left\{\|z\|^2>\frac{\|m\|^2}{16\epsilon^2}\right\}}\phi(z)dz\right)^2 \\
&\geq \frac{1}{8} \left(1-\exp\left(-\frac{1}{64}\frac{\|m\|^2}{\epsilon^2}\right)\right)^2 \mbox{ using Cramer bound again like \eqref{eq:prop1_bound2} for $\epsilon\leq \frac{\|m\|}{16\sqrt{d}}$}\\
&\geq \frac{1}{16} 
\end{split}
\]
Above all,
\[
\kappa = \max_{u: u\ll \pi} \frac{\chi^2(u\|\pi)}{\calE (u/\pi)} \geq \frac{\chi^2(\nu\|\pi)}{\calE (\nu/\pi)} 
\geq \frac{\epsilon^4}{80\|m\|^2}\exp\left(\frac{\|m\|^2}{64\epsilon^2}\right).
\]
\end{proof}

\section{Proof of results in Section \ref{sec:main}} \label{app:sec2}
\subsection{Proof of Lemma \ref{lem:GsnLyapunov}}
\begin{proof}
Recall that $H(x)=-\log\nu(x)$. Without loss of generality, we assume $m=0$ and $H(0)=0$.
As
\[
\|\nabla H(x)\|\|x\|\geq \langle\nabla H(x), x\rangle\geq c\|x\|^2,
\]
\[
\|\nabla H(x)\|^2\geq c\langle\nabla H(x), x\rangle.
\]
Then, by convexity of $H$, we have
\[
H(0)\geq H(x)-\langle \nabla H(x),x\rangle,
\]
which implies that
\[
cH(x)\leq c\langle \nabla H(x),x\rangle\leq \|\nabla H(x)\|^2.
\]

For $V(x)=\frac cd\|x\|^2+1$, 
\begin{align*}
\lc_\nu V(x)&=-\frac{2c}d\langle \nabla H(x), x\rangle+2c
\leq -2\frac {c^2}d\|x\|^2+2c
\leq -c  V(x)+3c 1_{\|x\|^2\leq \frac{3d}{c}}.
\end{align*}
In addition, as $\|\nabla^2 H(x)\|\leq L$, for some $x'$ on the line segment between $x$ and $0$,
\[
H(x)= H(0)+\frac12 x^T\nabla^2 H(x')x\leq \frac12 L\|x\|^2.
\]
Thus, if $\|x\|^2\leq \frac{3d}{c}$
\[
\frac{\sup_{x\in B} \nu(x) }{\inf_{x\in B}\nu(x)}\leq 
\exp\left(\frac{3dL}{2c} \right)
\]
We have thus verify the conditions of Definition \ref{defn:lyap}.
Applying Proposition \ref{prop:bakry_lyap}, we get
\[
q=\frac{1+3c \tfrac{3d}{c} \exp(\tfrac{3dL}{c})}{c}. 
\]

Next, note that because
\[
 H(x) - H(0)\geq H(x)-H(x/2) \geq \langle \nabla H(x/2), x/2\rangle \geq \frac{c}{4}\|x\|^2,
\]
we have
\[
\frac{1}{4}c \|x\|^2 \leq H(x) \leq \frac{1}{2}L\|x\|^2,
\]
which implies that
\[
\exp\left(-\frac12 L \|x\|^2\right)\leq \exp(-H(x))\leq \exp\left(-\frac14 c\|x\|^2\right).
\]
Therefore, 
\[
\int \exp(- H(x))dx\leq \int \exp\left(-\frac14 c\|x\|^2\right)\leq \left(\frac{4\pi}{c}\right)^{d/2},
\]
and for $\|x\|^2\leq \frac{3d}{c}$
\[
\exp(-H(x))\geq \exp(-\tfrac{3dL}{2c}).
\]
This leads to our estimate of $a$
\[
a=\frac{\int_{B} \exp(- H(x))dx}{\exp(-H(x))V_d (3d/c)^{\frac{d}{2}}}\leq \frac{1}{V_d}\exp\left(\frac{3Ld}{2c}\right) \left(\frac{4\pi}{3d}\right)^{d/2}.
\]
%
%
%

%
%

\end{proof}

\subsection{Proof of Lemma \ref{lem:gettinga}}
\begin{proof}
Without loss of generality, we assume $x_0=0$. Let $\nu_0=\min_{x\in B(0,R)} \nu(x)$ and $H(x)=-\log \nu(x)$. 

Note that because
\[
\lc_\nu V(x) =-2\gamma\langle \nabla H(x), x \rangle+d\gamma,
\]
$\lc_\nu V(x)\leq -\lambda V(x)$ when $\|x\|\geq R$ indicates that 
\[
\langle \nabla H(x), x\rangle\geq  \frac 12\lambda \|x\|^2
\]
Let $y=\frac{R}{\|x\|}x$, then 
\begin{align*}
H(x)-H(y)&=\int_0^1 \langle \nabla H(y+s(x-y)), x-y\rangle ds\\
&=\int_0^1 \langle \nabla H(y+s(x-y)), y+s(x-y)\rangle \frac{\|x\|-R}{R+s(\|x\|-R)}ds\\
&\geq \frac{1}{2}\lambda \int^1_0\|y+s(x-y)\|\|x-y\|ds\\
& \geq \frac{1}{2}\lambda \int^1_0\langle y+s(x-y),x-y\rangle ds
=\frac14\lambda (\|x\|^2-R^2). 
\end{align*}
Therefore 
\[
\nu(x)\leq \nu(y) \exp\left(-\frac14 \lambda(\|x\|^2-R^2)\right)\leq C\nu_0 \exp\left(-\frac14 \lambda(\|x\|^2-R^2)\right),
\]
as $\|y\|^2=R^2$.
Meanwhile, for $\|x\|\leq R$,
\[
\nu(x)\leq C\nu_0\leq C\nu_0\exp\left(-\frac14 \lambda(\|x\|^2-R^2)\right).
\]
Then, because $\int \nu(x)dx=1$, 
\[
1\leq C\nu_0 \exp\left(\frac14 \lambda R^2\right)\left(\frac{4\pi}{\lambda}\right)^{d/2}. 
\] 
This implies
\[
a\leq \frac{u_{B(0,R)}(x)}{\nu_0} \leq \frac{C}{V_dR^d}\exp\left(\frac14 \lambda R^2\right)\left(\frac{4\pi}{\lambda}\right)^{d/2}.
\]
\end{proof}

\subsection{Proof of Proposition \ref{prop:piyez}}
In this section, we prove a more general version of Proposition \ref{prop:piyez}:
\begin{prop}
\label{prop:piy}
\begin{enumerate}[1)]
\item If $\pi^y(x)\propto \phi(x/M)$, then Assumption \ref{aspt:piy} holds with 
\[\begin{split}
&R^2=3M^2 (2d+1), 
\quad Q=2M^2\left(1 + \frac{9}{2}(2d+1)\exp(12d+8)\right),\\
&A=\frac{1}{V_d}\left(\frac{2\pi}{3(2d+1)}\right)^{d/2}\exp\left(6d+4\right).
\end{split}\]
\item Suppose $\pi(x)=\sum_{i=1}^I p_i\nu_i(x)$, where $\nu_i(x)$ are $(c,L)$-log concave densities with modes $\|m_i\|\leq M$.
If $\pi^y(x)\propto \pi(x)^\beta$ with $\beta=d(2M^2c + 2M^2L^2/c)^{-1}$,
then Assumption \ref{aspt:piy} holds with 
\[\begin{split}
&R^2=20M^2\left(1+\frac{L^2}{c^2}\right), 
\quad Q=M^2\left(1+\frac{L^2}{c^2}\right)\left(\frac{4}{d}+100\exp\left(44\frac{dL}{c}\right)\right),\\
& A= \frac{1}{V_d}\left(\frac{4\pi}{5d}\right)^{d/2}\exp\left(22\frac{dL}{c}+\frac{5}{4}d\right).
\end{split}\]
\item If $\pi^y(x)\propto \phi(x/M)\pi(x)^\beta$, where $\nu_i(x)$ are $(c,L)$-log concave densities with modes $\|m_i\|\leq M$ and $\beta \leq (dM^2 c + dM^2L^2/c)^{-1}$, then Assumption \ref{aspt:piy} holds with
\[\begin{split}
&R^2=5M^2(2d+1),\quad Q=2M^2\left(1+\frac{25}{2}(2d+1)\exp\left(20d+30\right)\right),\\
&A=\frac{1}{V_d}\left(\frac{8\pi}{5(2d+1)}\right)^{d/2}\exp\left(12d+16\right).
\end{split}\]

\end{enumerate} 

\begin{rem}
\label{rem:noreg}
This proposition also considers scenario 2) where $\pi^y$ does not contain a Gaussian regularization. In this case, when implementing LD on $\pi^y$, the drift term of the diffusion process \eqref{eq:LDY} does not include $-Y(t)/M^2$. However, this would require the inverse temperature $\beta$ to be of a specific order, which can be different from the inverse of  $\tau$ used in \eqref{eq:LDY}. In comparison, scenario 3) only requires $\beta$ to be smaller than the threshold value, so we are free to choose any large enough $\tau$ and let $\beta=\frac{1}{\tau}$. The reason we have this freedom is because the additional regularization keeps the probability mass of $\pi^y$ staying inside $B(0,M)$. 
\end{rem}

\end{prop}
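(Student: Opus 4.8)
The plan is to verify Assumption \ref{aspt:piy} in all three cases by the same mechanism: for each center $m_i$ exhibit an explicit quadratic Lyapunov function of the form $V(x)=\gamma\|x-m_i\|^2+1$ for $\pi^y$, and then read off the triple $(R,Q,A)$ from Lemma \ref{lem:gettinga} and Definition \ref{defn:density}. Concretely, for each $\pi^y$ one computes
\[
\lc_{\pi^y}V(x)=2\gamma\langle x-m_i,\nabla\log\pi^y(x)\rangle+2d\gamma,
\]
bounds the drift term above by $-\,\mathrm{(const)}\,\|x-m_i\|^2+\mathrm{const}$, chooses $\gamma=\lambda/(2d)$ exactly as in the proof of Lemma \ref{lem:constantsbeta} so that $h=3\lambda$ and $B=B(m_i,R)$ with $R^2$ of order $d/\lambda$, and finally controls $C=\sup_{x\in B(m_i,R)}\pi^y(x)\big/\inf_{x\in B(m_i,R)}\pi^y(x)$ by bounding the oscillation of $\log\pi^y$ on that ball. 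Lemma \ref{lem:gettinga} then delivers $A$, and $Q=(1+hR^2C^2)/\lambda$ comes straight from Definition \ref{defn:density}. The structural point to track throughout is that $V$ is centered at $m_i$ while $\pi^y$ is centered at the origin (or is a power of a mixture with modes $m_j$), so cross terms must be absorbed using $\|m_i\|\le M$ and $\|m_i-m_j\|\le 2M$ together with Young's inequality.

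For part 1, $\nabla\log\pi^y(x)=-x/M^2$, so $\langle x-m_i,\nabla\log\pi^y(x)\rangle=-\|x-m_i\|^2/M^2-\langle m_i,x-m_i\rangle/M^2$; using $\langle m_i,x-m_i\rangle\ge-\tfrac12\|x-m_i\|^2-\tfrac12\|m_i\|^2$ and $\|m_i\|\le M$ yields $\lc_{\pi^y}V(x)\le-\tfrac{\gamma}{M^2}\|x-m_i\|^2+\gamma(2d+1)$, from which $\lambda=\tfrac1{2M^2}$, $R^2=\Theta(M^2 d)$ and $C=\exp(O(d))$ follow by substitution. Part 3 is the superposition of parts 1 and 2: $\nabla\log\pi^y(x)=-x/M^2+\beta\nabla\log\pi(x)$, and one simply adds the two drift estimates. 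The key observation here (this is Remark \ref{rem:noreg}) is that the Gaussian term $-x/M^2$ already supplies a confinement of rate $\tfrac1{2M^2}$ independent of $\beta$, so it suffices that $\beta$ lie below the threshold $(dM^2c+dM^2L^2/c)^{-1}$: with that bound the extra contribution $2\gamma\beta\langle x-m_i,\nabla\log\pi(x)\rangle$ only adds a negative quadratic term plus a constant $\le 4\gamma/d$, so the resulting $(R,Q,A)$ depend on $d$ and $M$ but not on the condition number $L/c$.

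Part 2 is where the real work lies. Here $\nabla\log\pi(x)=-\sum_j w_j(x)\nabla H_j(x)$ with $H_j=-\log\nu_j$ and weights $w_j(x)=p_j\nu_j(x)\big/\sum_k p_k\nu_k(x)$, so $\pi^\beta$ is not log-concave but its logarithmic gradient is a convex combination of the log-concave pieces. For each $j$, since $\nabla H_j(m_j)=0$, $(c,L)$-log-concavity gives $\langle x-m_j,\nabla H_j(x)\rangle\ge c\|x-m_j\|^2$ and $\|\nabla H_j(x)\|\le L\|x-m_j\|$; combining these with $\|m_i-m_j\|\le 2M$, Young's inequality, and $\|x-m_j\|^2\ge\tfrac12\|x-m_i\|^2-4M^2$, one obtains
\[
\langle x-m_i,\nabla\log\pi(x)\rangle\le-\tfrac{c}{4}\|x-m_i\|^2+2M^2c+\tfrac{2M^2L^2}{c},
\]
and taking the $w_j$-weighted average over $j$ preserves this bound. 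The specific choice $\beta=d\big(2M^2c+2M^2L^2/c\big)^{-1}$ is engineered precisely so that $2\gamma\beta\big(2M^2c+2M^2L^2/c\big)=2\gamma d$, leaving $\lc_{\pi^y}V(x)\le-\tfrac{\gamma\beta c}{2}\|x-m_i\|^2+4d\gamma$; hence $\lambda=\Theta(\beta c)$ and $R^2=\Theta\big(M^2(1+L^2/c^2)\big)$. To bound $C$ one notes that for $x,y\in B(m_i,R)$ and each $j$, $|H_j(x)-H_j(y)|\le\|x-y\|\sup_{B(m_i,R)}\|\nabla H_j\|\le 2LR(R+2M)$ because every point of the ball is within $R+2M$ of $m_j$; thus $|\log\pi(x)-\log\pi(y)|\le 2LR(R+2M)$ and $C\le\exp\big(2\beta LR(R+2M)\big)=\exp(O(dL/c))$, which with Lemma \ref{lem:gettinga} produces the stated $Q$ and $A$.

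The main obstacle is not conceptual but bookkeeping: landing on the explicit numerical constants requires careful tracking of the Young-inequality splits and of the choice of $\gamma$, and one must check that $\pi^\beta$ is genuinely a probability density (integrability follows from $\nu_j(x)\le\nu_j(m_j)\exp(-\tfrac c4\|x-m_j\|^2)$). The one genuine idea needed is the recognition in part 2 that although $\pi^\beta$ is a power of a mixture and hence not log-concave, the drift $\beta\nabla\log\pi$ is still ``inward-pointing on average'' since it is a $w_j(x)$-weighted average of the inward gradients $-\nabla H_j$; once this is in hand, the mixture structure costs nothing beyond replacing $m_j$ by $m_i$ at the price of the $4M^2$ and $2M^2L^2/c$ slack terms.
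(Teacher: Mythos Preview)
Your proposal is correct and follows essentially the same approach as the paper: for each center $m_i$ exhibit a quadratic Lyapunov function $V(x)=\gamma\|x-m_i\|^2+1$, bound $\lc_{\pi^y}V$ by absorbing the cross terms coming from the mismatch between $m_i$ and the center of $\pi^y$ (or the modes $m_j$ of the mixture components), and then read off $(R,Q,A)$ via Definition \ref{defn:density} and Lemma \ref{lem:gettinga}. The only differences are cosmetic bookkeeping: in Part~1 the paper uses the polarization identity $2\langle x-m_i,x\rangle=\|x-m_i\|^2+\|x\|^2-\|m_i\|^2$ instead of your Young split, and it takes $\gamma=(M^2(2d+1))^{-1}$ rather than $\lambda/(2d)$; in Part~2 the paper bounds the oscillation of $H_j$ on $B(m_i,R)$ via the Hessian bound $\|\nabla^2 H_j\|\le L$ rather than your gradient bound $\|\nabla H_j(z)\|\le L\|z-m_j\|$, but both give $C=\exp(O(dL/c))$; and in Part~3 the paper combines the two drift estimates exactly as you describe and confirms your observation that the $\beta$-threshold makes the $\pi^\beta$ contribution to $C$ bounded by an absolute constant, so $(R,Q,A)$ carry no $L/c$ dependence.
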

\begin{proof}
{\bf For claim 1),} $\pi^y(x)\propto \phi(x/M)$. Consider $V_i(x)=\gamma\|x-m_i\|^2+1$ with $\gamma= \left(M^2(2d+1)\right)^{-1}$. 
We first note that 
\begin{align}
\notag
\lc_{\pi^y} V_i(x)&=-\frac{2\gamma}{M^2}\langle x-m_i, x\rangle
+2d\gamma\\
\notag
&=-\frac{\gamma}{M^2}\| x-m_i\|^2+\frac{\gamma}{M^2}\|m_i\|^2 - \frac{\gamma}{M^2}\|x\|^2+2d\gamma\\
\notag
&\leq -\frac{\gamma}{M^2}\|x-m_i\|^2+(2d+1)\gamma\\
\notag
&\leq -\frac{1}{2M^2}V_i(x)+\left(\frac{1}{2M^2}+(2d+1)\gamma\right) 1_{\|x-m_i\|^2\leq 3M^2(2d+1)}\\
\label{tmp:LW1}
&= -\frac{1}{2M^2}V_i(x)+\frac{3}{2M^2}1_{\|x-m_i\|^2\leq 3M^2(2d+1)}.
\end{align}
Then, the bounding constants for the Lyapunov function are 
\[
\lambda=\frac1{2M^2},\quad h=\frac{3}{2M^2}, \quad R^2=3M^2 (2d+1),
\]
In addition, for $x\in B(m_i,R)$, we have $\|x\|^2\leq 2R^2+2M^2$. Thus, the density ratio can be bounded by
\[
C=\exp\left(\frac{R^2+M^2}{M^2}\right)=\exp(6d+4)
\]
By Proposition \ref{prop:bakry_lyap},
\[
Q=\frac{1+hR^2 C^2}{\lambda}=2M^2 + 9M^2(2d+1)\exp(12d+8). 
\]
Moreover, 
\[
A=\frac{(2\pi M^2)^{d/2}\exp(\tfrac{R^2+M^2}{M^2})}{V_d R^d}
=\frac{1}{V_d}\left(\frac{2\pi}{3(2d+1)}\right)^{d/2}\exp\left(6d+4\right).
\]

\noindent{\bf For claim 2),}  $\pi^y(x)\propto \pi(x)^\beta $. Consider $V_i(x)=\gamma \|x-m_i\|^2+1$ with
\[
\gamma\leq\left(2M^2+2M^2 \frac{L^2}{c^2}+\frac{2d}{\beta c}\right)^{-1}.
\]
Let $H_i(x)=-\log \nu_i(x)$.
We first note that
\[
\nabla \log \pi^y(x)=\beta\nabla \log \pi(x)=-\frac{ \beta\sum_{i=1}^Ip_i \nu_i(x)\nabla H_i(x) }{\sum_{i=1}^I  p_i\nu_i(x)}
\]
and
\begin{align*}
&-\langle \nabla V_j(x), \nabla H_i(x)\rangle\\
=&-2\gamma\langle x-m_j, \nabla H_i(x)-\nabla H_i(m_i)\rangle \\
\leq& -2\gamma\langle x-m_i, \nabla H_i(x)-\nabla H_i(m_i)\rangle+2\gamma\langle m_j-m_i, \nabla H_i(x)-\nabla H_i(m_i)\rangle\\
\leq& -2c\gamma \|x-m_i\|^2+2\gamma L\|m_j-m_i\|\|x-m_i\|\\
\leq& - c\gamma \|x-m_i\|^2+\frac{\gamma L^2}{c}\|m_j-m_i\|^2\\
\leq& - \frac{1}{2}c\gamma \|x-m_j\|^2 + c\gamma\|m_j-m_i\|^2 + \frac{\gamma L^2}{c}\|m_j-m_i\|^2\\
\leq& -\frac12c V_j(x)+2c\gamma M^2
+2\frac{\gamma L^2}{c}M^2+\frac12c.
\end{align*}
Then,
\begin{align*}
\lc_{\pi^y}V_j(x)&\leq 
-\frac1{2 }\beta cV_j(x)+2 \beta c\gamma M^2
+2\beta\frac{\gamma L^2}{c}M^2+\frac1{2}\beta c+2\gamma d\\
&=-\frac14\beta cV_j(x) - \frac{1}{4}\beta c\gamma \|x-m_j\|^2 + \beta c\gamma\left(2 M^2
+2\frac{L^2M^2}{c^2}+\frac{2d}{\beta c}\right)+\frac1{4}\beta c\\
&\leq -\frac14 \beta cV_j(x)+\frac54\beta c 1_{\|x-m_j\|^2\leq \frac{5}{\gamma}}.
\end{align*}
For $\beta=d\left(M^2c + M^2L^2/c\right)^{-1}$,
\[
R^2 = \frac{5}{\gamma} = 20M^2\left(1+\frac{L^2}{c^2}\right).
\]
Next, we note that if
$\|x-m_j\|^2\leq \frac{5}{\gamma}$, 
\[
\|x-m_i\|^2\leq \frac{10}{\gamma}+4M^2\leq \frac{11}{5}R^2.
\]
Then, note that for any region $B$, if we let 
\[
\psi(x):=\max_i \frac{\max_{x\in B} \nu_i(x)}{\min_{x\in B} \nu_i(x)}
\]
\[
\frac{\max_{x\in B} \pi^y(x)}{\min_{x\in B} \pi^y(x)}\leq \frac{(\sum_i p_i\max_{x\in B} \nu_i(x))^\beta}{(\sum_i p_i\min_{x\in B} \nu_i(x))^\beta}\leq \frac{(\sum_i p_i\psi\min_{x\in B} \nu_i(x))^\beta}{(\sum_i p_i\min_{x\in B} \nu_i(x))^\beta}=(\psi(x))^\beta. 
\]
Therefore 
\[\begin{split}
C=\frac{\max_{B(m_j,R)} \pi^y(x)}{\min_{B(m_j,R)} \pi^y(x)}
&\leq \max_i \left(\frac{\max_{B(m_j,R)} \nu_i(x)}{\min_{B(m_j,R)} \nu_i(x)}\right)^{\beta}\\
&\leq  \exp\left(\frac12L \frac{11}{5}R^2\beta\right)=\exp\left(22\frac{dL}{c}\right). 
\end{split}\]
By Proposition \ref{prop:bakry_lyap},
\[
Q=\frac{1+\frac54 \beta c \frac{5}{\gamma} \exp(44dL/c)}{\frac14 \beta c}
=M^2\left(1+\frac{L^2}{c^2}\right)\left(\frac{4}{d}+100\exp\left(44\frac{dL}{c}\right)\right).
\]
The estimate of $A$ can be obtained by Lemma \ref{lem:gettinga}, 
\[
A=\frac{C}{V_d}\exp\left(\frac{1}{16} \beta cR^2\right)\left(\frac{16\pi}{\beta c R^2}\right)^{d/2}
=\frac{1}{V_d}\exp\left(22\frac{dL}{c}+\frac{5}{4}d\right)\left(\frac{4\pi}{5d}\right)^{d/2}.
\]

\noindent{\bf For claim 3),}  $\pi^y(x)\propto \phi(x/M)\pi(x)^\beta$.
Consider $V_i(x)=\gamma\|x-m_i\|^2+1$. 
Combining our analysis in claim 1) and claim 2), we have
\[\begin{split}
\lc_{\pi^y}V_j(x) \leq& -\left(\frac{1}{2M^2}+\frac{\beta c}{4}\right)V_j(x)
-\left(\frac{1}{2M^2}+\frac{\beta c}{4}\right)\gamma\|x-m_j\|^2\\
&+\frac{1}{2M^2} + \beta c\gamma\left(2 M^2
+2\frac{L^2M^2}{c^2}+\frac{2d+1}{\beta c}\right)+\frac1{4}\beta c\\
\leq& -\frac{1}{2M^2}V_j(x)
-\frac{1}{2M^2}\gamma\|x-m_j\|^2\\
&+\frac{1}{2M^2} +  \beta c\gamma\left(2 M^2
+2\frac{L^2M^2}{c^2}\right)+\gamma(2d+1).
\end{split}\]
For $\beta \leq (dM^2 c + dM^2L^2/c)^{-1}$ and  $\gamma = (M^2(2d+1))^{-1}$,
we can set
\[
R^2=\frac{5}{\gamma} = 5M^2(2d+1) \mbox{ and } h=\frac{5}{2M^2}.
\]
Then
\[
\lc_{\pi^y}V_j(x) \leq -\frac{1}{2M^2}V_j(x) + h1_{\|x-m_j\|^2 \leq R^2}.
\]
We next note that as
\[
\frac{\max_{x\in B(m_j,R)} \phi(x/M)}{\min_{x\in B(m_j,R)}\phi(x/M)}
\leq \exp\left(\frac{R^2+M^2}{M^2}\right) = \exp(10d+6)
\]
and
\[\begin{split}
\frac{\max_{x\in B(m_j,R)} \pi(x)^{\beta}}{\min_{x\in B(m_j,R)}\pi(x)^{\beta}}
&\leq \exp\left(\frac{1}{2}L(2R^2+4M^2)\beta\right)\\
&\leq \exp\left(\frac{10+7/d}{1+L^2/c^2}\frac{L}{c}\right)
\leq \exp(17/2),
\end{split}\]
we have
\[
C \leq \frac{\max_{x\in B(m_j,R)} \phi(x/M)}{\min_{x\in B(m_j,R)}\phi(x/M)}\frac{\max_{x\in B(m_j,R)} \pi(x)^{\beta}}{\min_{x\in B(m_j,R)}\pi(x)^{\beta}}
\leq  \exp\left(10d+15\right)
\]
By Proposition \ref{prop:bakry_lyap},
\[
Q = \frac{1+hR^2C^2}{\tfrac{1}{2M^2}}
=2M^2\left(1+\frac{25}{2}(2d+1)\exp\left(20d+30\right)\right)
\]
Lastly, the constant $A$ can be obtain from Lemma \ref{lem:gettinga}:
\[
A=\frac{C}{V_d}\exp\left(\frac{R^2}{8M^2}\right)\left(\frac{8M^2\pi}{R^2}\right)^{d/2}
\leq \frac{1}{V_d}\exp\left(12d+16\right)\left(\frac{8\pi}{5(2d+1)}\right)^{d/2}.
\]
\end{proof}

\end{appendix}

\bibliographystyle{plain}
\bibliography{ReLDbib.bib}

\end{document}